\definecolor{airforceblue}{rgb}{0.36, 0.54, 0.66}
\newtheorem{prop}{Proposition}
\newtheorem{lemma}{Lemma}
\newtheorem{corollary}{Corollary}
\newtheorem{theorem}{Theorem}
\theoremstyle{definition}
\newtheorem{example}{Example}
\newtheorem{remark}{Remark}
\newcommand{\ignore}[1]{}
\newcommand{\R}{\mathbb{R}}
\newcommand{\N}{\mathbb{N}}
\newcommand{\norm}[1]{\left\lVert #1 \right\rVert}
\newcommand{\abs}[1]{\left\vert #1 \right\rvert}
\newcommand{\E}[1]{\mathbb{E}{\left[ #1\right]}}
\newcommand{\V}[1]{\mathrm{Var}{\left( #1\right)}}
\definecolor{bleu}{rgb}{0.08, 0.38, 0.74}
\DeclareMathOperator{\Tr}{Tr}
\DeclarePairedDelimiter\autobracket{(}{)}
\DeclarePairedDelimiterX\car[1]\lvert\rvert{\ifblank{#1}}
\newcommand{\brac}[1]{\autobracket*{#1}}
\newcommand{\inner}[1]{\left\langle #1 \right\rangle}
\theoremstyle{definition}
\theoremstyle{definition}
\begin{document}
	%============================================================================================%
	\title{Functional Gaussian approximations on Hilbert-Poisson spaces}
	
	\thanks{S. Bourguin was supported in part by the Simons Foundation
		grant 635136}

	\author{Solesne Bourguin$^1$}
	\address{$^1$Boston University, Department of Mathematics and Statistics,  111 Cummington Mall, Boston, MA 02215, USA}
	\email{bourguin@math.bu.edu}
	\author{Simon Campese$^2$}
	\address{$^2$Hamburg University of Technology, Institute of Mathematics, Am Schwarzenberg-Campus 3, 21073 Hamburg, Germany}
	\email{simon.campese@tuhh.de}
	\author{Thanh Dang$^1$}
	\email{ycloud77@bu.edu}

	\begin{abstract}
		
		We develop a functional Stein-Malliavin method in a non-diffusive Poissonian setting, thus obtaining a) quantitative central limit theorems for approximation of arbitrary non-degenerate Gaussian random elements taking values in a separable Hilbert space and b) fourth moment bounds for approximating sequences with finite chaos expansion.
		% As in the diffusive case treated earlier by the first two authors, all known finite-dimensional fourth moment results for Gaussian approximation in a non-diffusive context are included as special cases.
		Our results rely on an infinite-dimensional version of Stein's method of exchangeable pairs combined with the so-called Gamma calculus. Two applications are included: Brownian approximation of Poisson processes in Besov-Liouville spaces and a functional limit theorem for an edge-counting statistic of a random geometric graph.
	\end{abstract}
	\subjclass[2010]{46G12, 46N30, 60B12, 60F17}
	\keywords{Poisson space; Gaussian measures on Hilbert spaces; Dirichlet structures; Stein's method on
		Banach spaces; Gaussian approximations; probabilistic metrics;
		functional limit theorems; fourth moment conditions}

	\bibliographystyle{amsalpha}
	
	\maketitle      
	\section{Introduction}
	The now classical Stein-Malliavin method, a combination of Stein's method with Malliavin calculus, has been very successful in deriving quantitative central limit theorems for non-linear approximation. Since its inception by Nourdin and Peccati in 2013 (see~\cite{nourdin-peccati:2009:steins-method-wiener}), it has formed a vivid community which developed the theory further and applied it to numerous situations. An excellent exposition of the basic method is available in the monograph~\cite{nourdin-peccati:2012:normal-approximations-malliavin} ,  while I. Nourdin keeps a rather exhaustive and continuously updated list of references on the webpage \texttt{https://sites.google.com/site/malliavinstein}. From a theoretical point of view, one of the main remaining challenges is an adaptation of the method to the infinite-dimensional setting, with quantitative approximation of Gaussian processes as main application. For random elements taking values in a Hilbert space, and in a diffusive context, this has recently been achieved by~\cite{bourguin-campese:2020:approximation-hilbert-valued-gaussians}. In this work, we provide the natural analogue in the non-diffusive context of Poisson spaces. More specifically, let $X$ be a square-integrable measurable transformation of a Poisson process and $Z$ be a Gaussian process, both taking values in some separable Hilbert space $K$. Informally, our main results (Theorems~\ref{theorem_fourmomentHilbert} and \ref{theorem_contractionestimate} on page \pageref{theorem_fourmomentHilbert}) provide bounds on a probabilistic distance between $X$ and $Z$ (metrizing convergence in law) in terms of the first four strong moments of $X$ or alternatively in terms of so called contractions. From these bounds, one can directly deduce quantitative and functional central limit theorems for convergence towards a Gaussian process, as well as an infinite-dimensional version of the Fourth Moment Theorem, which says that for a sequence of $K$-valued multiple Poisson-integrals, convergence of the second and fourth moments implies convergence towards a Gaussian process.
	
	It is noteworthy to observe that while the analogous diffusive statements in~\cite{bourguin-campese:2020:approximation-hilbert-valued-gaussians} look similar to our non-diffusive ones, their proofs are rather different, for the same reason as in the finite-dimensional case: no chain rule is available in the non-diffusive case, which renders the usual integration by parts argument unfeasible. Instead, one can construct an appropriate exchangeable pair and then apply a Taylor argument in order to control the term resulting from an application of Stein's method. Compared to the finite-dimensional setting, several technical issues arise which require the use of Hilbert-space techniques. A commonality with the diffusive statements is, however, that our main results subsume all known finite-dimensional Malliavin-Stein bounds in a Poissonian context as special cases (see Remark~\ref{rmk:1} on page~\pageref{rmk:1} for details).
	
	In order to illustrate our results, we provide two applications: The first one concerns the classical approximation of a Brownian motion by a normalized Poisson process with growing intensity $\lambda$. A natural class of Hilbert spaces accommodating the sample paths of both processes are the so-called Besov-Liouville spaces. In~\cite{coutin-decreusefond:2013:steins-method-brownian}, the authors showed that convergence takes place at rate $\lambda^{-1/2}$ (as in the classical one-dimensional case). To prove this, they first transferred both processes isometrically $\ell^2(\mathbb{N})$ and then had to go through rather tedious calculations. In contrast to this, our bounds yield the same result in just a few lines, and no isometry is necessary.
	As a second application we illustrate, using an edge counting statistic of a random graph, how known one-dimensional central limit theorem can be made functional with very little additional effort.
	
	Besides the already mentioned reference~\cite{bourguin-campese:2020:approximation-hilbert-valued-gaussians}, the work~\cite{coutin-decreusefond:2013:steins-method-brownian}  is also concerned with quantitative functional approximation in a Malliavin-Stein context. As already mentioned, the authors use a different approach which crucially depends on isometrically mapping all random elements to $\ell^2(\mathbb{N})$. In applications, the need to explicitly evaluate such an isometry can be seen as a drawback. Also, our setting seems to be more general and does not rely on ad-hoc arguments depending on the Gaussian process at hand. Other related references proving functional central limit theorems using Malliavin-Stein techniques are ~%
	\cite{kasprzak:2017:multivariate-functional-approximations%
		,kasprzak:2020:functional-approximations-via%
		,dobler-kasprzak:2021:steins-method-exchangeable%
		,dobler-kasprzak-peccati:2019:functional-convergence-u-processes%
	}.
	
	The rest of this paper is organized as follows. In Section~\ref{Section_prelim} we introduce the necessary preliminaries, followed by the main results in Section~\ref{sec:stat-main-results}. The proofs are given in Section~\ref{sec:proof-main-results} which is followed by the two aforementioned applications in Section~\ref{sec:applications}. An appendix contains several technical lemmas required for the proofs.
	
	\section{Preliminaries}
	\label{Section_prelim}
	\subsection{Probability on Hilbert spaces}
	\hfill\\
	\indent Let $K$ be a real separable Hilbert space, $\mathcal{B}(K)$ the Borel $\sigma$-algebra of $K$ and $\brac{\Omega,\mathcal{F},P}$ a complete probability space. A $K$-valued random variable $X$ is a measurable map from $\brac{\Omega,\mathcal{F}}$ to $\brac{K,\mathcal{B}(K)}$. Such random variables are characterized by the property that for any continuous linear functional $\phi\in K^*$, the function $\phi(X):\Omega\to\R$ is a real-valued random variable. As usual, the distribution or law of $X$ is the push-forward probability measure $P\circ X^{-1}$ on $\brac{K,\mathcal{B}(K)}$. The set of all $K$-valued random variables is a a vector space over the field of real numbers. If the Lebesgue integral $\E{\norm{X}_K}=\int_\Omega \norm{X}_KdP$ exists and is finite, then the Bochner integral $\int_\Omega XdP$ exists in $K$ and is called the expectation of $X$. Slightly abusing notation, we denote this integral by $\E{X}$ as well, and it can always inferred from the context whether $\E{\cdot}$ refers to Lebesgue or Bochner integration with respect to $P$. For $p\geq 1$, $L^p\brac{\Omega,P}$ denotes the Banach space of all equivalence classes (under almost sure equality) of $K$-valued random variables $X$ with finite $p$-th moment, i.e., such that
	\begin{align*}
		\norm{X}_{L^p\brac{\Omega,P}}=\E{\norm{X}^p_K}^{1/p}<\infty.
	\end{align*}
	Note that for all $X\in L^p\brac{\Omega,P}$, the Bochner integral $\E{X}$ exists. In the case $X\in L^2\brac{\Omega,P}$, the covariance operator $S:K\to K$ of $X$ is defined by
	\begin{align*}
		Su=\E{\inner{X,u}_K X}.
	\end{align*}
	$S$ is a positive, self-adjoint trace-class operator that verifies the identity
	\begin{align*}
		\Tr S= \E{\norm{X}^2_K}.
	\end{align*}
	We denote by $S_1(K)$ the Banach space of all trace-class operators on $K$, equipped with norm $\norm{T}_{S_1(K)}=\Tr \abs{T}$, where $\abs{T}=\sqrt{T T^{\ast}}$ and $T^{\ast}$ denotes the adjoint of $T$. The subspace of Hilbert-Schmidt operators on $K$ is denoted by $\operatorname{HS}(K)$, its inner product and norm by $\inner{\cdot,\cdot}_{\operatorname{HS}(K)},\norm{\cdot}_{\operatorname{HS}(K)}$ respectively. Recall that
	\begin{align*}
		\norm{\cdot}_{\operatorname{op}}\leq \norm{\cdot}_{\operatorname{HS}(K)}\leq \norm{\cdot}_{S_1(K)},
	\end{align*}
	where $\norm{\cdot}_{\operatorname{op}}$ denotes the operator norm.

	\subsection{Gaussian measures and Stein's method}
	\hfill\\
	\indent In this section, we introduce Gaussian measures, the associated abstract Wiener spaces and Stein characterization of Gaussian measures. The theory will be presented within a general Banach space setting. Note that at the end of this section and beyond that, we will assume any target Gaussian measure under consideration is defined on a Hilbert space such as $K$ above. Standard references for Gaussian measures and abstract Wiener spaces are the monographs \cite{bogachev:1998:gaussian-measures,kuo:1975:gaussian-measures-banach}, while Stein's method for Gaussian measures has been developed by Shih in \cite{shih:2011:steins-method-infinite-dimensional} (see also Barbour's earlier work~\cite{barbour:1990:steins-method-diffusion} for the special case of Brownian motion).
	\subsubsection{Abstract Wiener spaces}
	Let $H$ be a real separable Hilbert space equipped with inner product $\inner{\cdot,\cdot}_H$ and $\norm{\cdot}$ be a norm on $H$ weaker than $\norm{\cdot}_H$. Denote $B$ the Banach space obtained via completion of $H$ with respect to $\norm{\cdot}$ and $i$ the canonical embedding of $H$ into $B$. The triple $(i,H,B)$ defines an abstract Wiener space and has first been introduced by Gross in~\cite{gross:1967:abstract-wiener-spaces}. We identify $B^*$ as a dense subspace of $H^{\ast}$ under the adjoint $i^*$ of $i$, so that we have the continuous embeddings $B^*\subseteq H\subseteq B$, where, as usual, $H$ is identified with its dual $H^{\ast}$.  All of this can be summarized via the diagram
	\begin{align*}
		B^*\xrightarrow{i^*} H^*=H \xrightarrow{i} B.
	\end{align*}
	The abstract Wiener measure $p$ on $B$ is characterized as the Borel measure on $B$ satisfying
	\begin{align*}
		\int_B \exp\brac{{i\inner{x,\eta}}_{B,B^*}} p(dx)=\exp\brac{-\frac{\norm{\eta}^2_H}{2}},
	\end{align*}
	for any $\eta\in B^*$. 
	\subsubsection{Gaussian measures}
	Let $B$ be a separable Banach space, with $\mathcal{B}(B)$ its
	Borel $\sigma$-algebra. A Gaussian measure $\mu$ is a
	probability measure on $(B,\mathcal{B}(B))$ such that every
	linear functional $x\in B^*$, considered as a (real-valued) random variable on $(B,\mathcal{B}(B),\mu)$, has a Gaussian distribution on
	$(\mathbb{R},\mathcal{B}(\mathbb{R}))$. Such a Gaussian measure is called centered and/or non-degenerate, if these properties hold
	for the distributions of every $x\in B^*$.
	
	We can see that every abstract Wiener measure is a Gaussian measure, and conversely, for every Gaussian measure $\mu$ on $B$, there exists a Hilbert space $H$ such that $(i,H,B)$ forms an abstract Wiener space. The space $H$ is known as the Cameron Martin space.
	
	\subsubsection{Stein characterization of Gaussian measures}
	Let $B$ be a real separable Banach space with norm
	$\norm{\cdot}$. Let $Z$ be a $B$-valued random variable which induces a centered Gaussian measure $\mu_Z$ on $B$ and let $(i,H,B)$ be the associated abstract Wiener space. By $\{P_t:t\geq 0\}$ we denote the Ornstein-Uhlenbeck semi-group of $Z$. It has the Mehler representation
	\begin{align*}
		P_tf(x)=\int_B f\brac{e^{-t}x+\sqrt{1-e^{-2t}}y}\mu_Z(dy),
	\end{align*}
	provided such an integral exists. In \cite[Theorem 3.1]{shih:2011:steins-method-infinite-dimensional}, Shih proved the following Stein lemma for abstract Wiener measures.
	\begin{theorem}
		\label{theorem_steinlemma}
		Let $X$ be a $B$-valued random variable with distribution $\mu_X$.
		\begin{enumerate}[label=\roman*)]
			\item If $B$ is finite-dimensional, then $\mu_X=\mu_Z$ if and only if
			\begin{align}
				\label{Stein_characterization}
				\E{\inner{X,\nabla f(X)}_{B,B^*}-\Delta_G f(X)}=0
			\end{align}
			for any twice-differentiable function $f$ on $B$ such that $\E{\norm{\nabla^2 f(Z)}_{S_1{(H)}}}<\infty$.
			\item If $B$ is infinite-dimensional, then $\mu_X=\mu_Z$ if and only if \eqref{Stein_characterization} holds for every twice $H$-differentiable function $f$ on $B$ such that $\nabla f(x)\in B^*$ for every $x\in B$, \\$\E{\norm{\nabla^2 f(Z)}_{S_1(H)}}<\infty$ and $\E{\norm{\nabla f(Z)}_{B^*}^2}<\infty$. 
		\end{enumerate}
	\end{theorem}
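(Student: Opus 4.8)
The plan is to run the standard two-directional argument behind every Stein characterization, using the Ornstein--Uhlenbeck semigroup $\{P_t\}$ of $Z$. For the \emph{necessity} direction, write $L f = \Delta_G f - \inner{\cdot,\nabla f}_{B,B^*}$ for the generator of $\{P_t\}$, so that the left-hand side of \eqref{Stein_characterization} is exactly $\E{L f(X)}$. Differentiating the Mehler representation under the integral sign --- legitimate for $f$ in the stated regularity class --- shows that $t \mapsto P_t f(x)$ is differentiable with $\tfrac{d}{dt}P_t f = L P_t f$, while $t \mapsto \E{P_t f(Z)}$ is constant because $\mu_Z$ is $P_t$-invariant; letting $t \downarrow 0$ gives $\E{L f(Z)} = 0$. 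Hence if $\mu_X = \mu_Z$ then \eqref{Stein_characterization} holds, and the integrability hypotheses on $\nabla f$ and $\nabla^2 f$ are precisely what makes the dominated-convergence steps here valid.

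For the \emph{sufficiency} direction, I would fix a test function $h$ ranging over a class rich enough to determine Borel probability measures on $B$ --- concretely the bounded smooth cylindrical functions $h(x) = g\brac{\inner{x,\eta_1}_{B,B^*},\dots,\inner{x,\eta_n}_{B,B^*}}$ with $\eta_j \in B^*$ and $g \in C_b^\infty(\R^n)$ --- and solve the associated Stein equation by setting
\begin{align*}
	U_h(x) = -\int_0^\infty \brac{P_t h(x) - \E{h(Z)}}\,dt.
\end{align*}
Exponential ergodicity of $\{P_t\}$ (for Lipschitz $h$ one has $\abs{P_t h(x) - \E{h(Z)}} \lesssim e^{-t}(1 + \norm{x})$, with an $L^1(\mu_Z)$ prefactor in $x$) makes this integral converge, and the fundamental theorem of calculus applied to $t \mapsto P_t h(x)$ yields the Stein equation
\begin{align*}
	\inner{x,\nabla U_h(x)}_{B,B^*} - \Delta_G U_h(x) = h(x) - \E{h(Z)}.
\end{align*}
Evaluating this at $X$, taking expectations, and invoking \eqref{Stein_characterization} with $f = U_h$ then gives $\E{h(X)} = \E{h(Z)}$ for every such $h$, whence $\mu_X = \mu_Z$.

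The technical heart --- and the step I expect to be the main obstacle, especially in the infinite-dimensional case (ii) --- is verifying that $U_h$ is admissible in \eqref{Stein_characterization}: twice $H$-differentiable, with $\nabla U_h(x) \in B^*$ for every $x$, $\E{\norm{\nabla^2 U_h(Z)}_{\mathcal{S}_1(H)}} < \infty$ and $\E{\norm{\nabla U_h(Z)}_{B^*}^2} < \infty$. The key device is differentiation of the Mehler formula combined with a Gaussian integration by parts: one obtains $\nabla P_t h(x) = e^{-t}\int_B \nabla h\brac{e^{-t}x + \sqrt{1-e^{-2t}}\,y}\,\mu_Z(dy)$, which lies in $B^*$ since $\nabla h$ does, and, integrating by parts in $y$, $\nabla^2 P_t h(x) = \tfrac{e^{-t}}{\sqrt{1-e^{-2t}}}\int_B \nabla h\brac{e^{-t}x + \sqrt{1-e^{-2t}}\,y} \otimes y\,\mu_Z(dy)$, which is automatically trace class with $\norm{\nabla^2 P_t h(x)}_{\mathcal{S}_1(H)} \lesssim \tfrac{e^{-t}}{\sqrt{1-e^{-2t}}}$. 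Since $\tfrac{1}{\sqrt{1-e^{-2t}}} \sim \tfrac{1}{\sqrt{2t}}$ is integrable near $t = 0$ and $e^{-t}$ controls the tail, integrating in $t$ transfers all these bounds to $U_h$. In the finite-dimensional case (i) the cylindrical functions are just $C_b^\infty(B)$ and these checks are routine; in case (ii) they are exactly why the extra requirements $\nabla f(x) \in B^*$ and $\E{\norm{\nabla f(Z)}_{B^*}^2} < \infty$ are imposed, and they force the restriction to cylindrical test functions with directions in $B^*$, so that $\nabla h$ --- and, after smoothing, $\nabla U_h$ --- genuinely take values in $B^*$.
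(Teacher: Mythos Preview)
The paper does not actually prove this theorem: it is quoted verbatim from Shih \cite[Theorem 3.1]{shih:2011:steins-method-infinite-dimensional} as a preliminary result, with no argument given. There is therefore no proof in the present paper to compare your proposal against.

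That said, your outline is the standard semigroup route to Stein characterizations and is, in broad strokes, how Shih's original proof proceeds. The necessity direction via invariance of $\mu_Z$ under $P_t$ is correct. For sufficiency, your choice of cylindrical test functions and the formula $U_h(x) = -\int_0^\infty (P_t h(x) - \E{h(Z)})\,dt$ are exactly right, and the regularity estimates you sketch (differentiating Mehler, Gaussian integration by parts to trade a derivative for a factor of $y$, the $e^{-t}/\sqrt{1-e^{-2t}}$ integrability) are the correct mechanisms. One point to be careful with: your displayed formula for $\nabla^2 P_t h$ as a rank-one-type integral $\int \nabla h(\cdots)\otimes y\,\mu_Z(dy)$ is not literally correct as written --- the integration by parts produces a pairing $\inner{\nabla h, y}_H$ rather than a tensor, and an additional derivative or a second integration by parts is needed to reach the Hessian; the trace-class bound you state is nonetheless the right one. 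Also, the claim that cylindrical functions determine Borel measures on $B$ requires the separability of $B$, which is assumed here. These are refinements rather than gaps; the strategy is sound and matches the literature.
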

	The notion of $H$-derivative which is also known as Fréchet derivative along $H$ and appears in Theorem
	\ref{theorem_steinlemma} was introduced by Gross in \cite{gross:1967:potential-theory-hilbert}, and
	we briefly recall it here for the sake of self-containedness. A function $f:U\to W$  from an open set $U$ of $B$ into a Banach space $W$ is said to be $H$-differentiable at $x\in U$ if the map $\phi(h)=f(x+h),h\in H$, regarded as a function defined in a neighborhood of the origin of $H$ is Fréchet-differentiable at $0$. The $H$-derivative of $f$ at $x$ in the direction $h\in H$ is denoted by $\inner{\nabla f(x), h}_H$. The $k$-th order $H$-derivatives of $f$ at $x$ can then be constructed inductively and are denoted by $\nabla^k f(x)$, provided they exist. If $f$ is scalar-valued, $\nabla f(x)\in H^* \simeq H$ and $\nabla^2 f(x)$ is a bounded linear operator from $H$ to $H^*$ for every $x\in U$. The notation $\inner{\nabla^2 f(x)h,k}_H$ or $\nabla^2 f(x)(h,k)$ will stand for the action of the linear form $\nabla^2 f(x)(h,\cdot)$ on $k$.
	
	\noindent If $\nabla^2 f(x)$ is a trace-class operator on $H$, the Gross Laplacian $\Delta_{G}f(x)$ of $f$ at $x$ is defined as $\Delta_G f(x)=\Tr_H(\nabla^2 f(x))$.
	
	\subsubsection{Stein's equation}
	In view of Theorem \ref{Stein_characterization}, the associated Stein equation is given by
	\begin{align*}
		\inner{x,\nabla g(x)}_{B,B^*}-\Delta_G g(x)=h(x)-\E{h(Z)}
	\end{align*}
	for $x\in B$, where $h$ belongs to a suitable class of test functions. Recall that we assume $K$ to be a separable Hilbert space. From this point forward, we will let $B=K$ and assume our test functions belong to $C^3_b(K)$, the class of real-valued functions on $K$ that have bounded Fréchet  derivatives up to order three. This space is equipped with the norm \begin{align*}\norm{h}_{C^3_b(K)}=\sup_{j=1,2,3}\sup_{x\in K}\norm{D^jh(x)}_{K^{\otimes j}}.\end{align*}
	Using standard semigroup techniques, the first two authors of this work showed in  \cite{bourguin-campese:2020:approximation-hilbert-valued-gaussians} that there is a
	solution $g_h(x)$ for every test function $h(x)$ and that
	$g_h\in C^3_b(K)$ when $h\in C^3_b(K)$. Specifically,
	\cite[Lemma 2.4]{bourguin-campese:2020:approximation-hilbert-valued-gaussians} provides the estimates
	\begin{align}
		\label{estimate_steinequaitonsol}
		\sup_{x\in K}\norm{D^jg_h(x)}_{K^{\otimes j}}\leq \frac{1}{j}\norm{h}_{C^j_b(K)}
	\end{align}
	and
	\begin{align*}
		\norm{g_h}_{C^3_b(K)}\leq \norm{h}_{C^3_b(K)}.
	\end{align*}
	Thus, using the probability distance
	\begin{align*}
		d_3(X_1,X_2)=\sup_{\substack{h\in C^3_b(K)\\\norm{h}_{C^3_b(K)}\leq 1}}\abs{\E{h(X_1)-h(X_2)}},
	\end{align*}
	Stein's equation implies that
	\begin{align*}
		d_3(X,Z)= \sup_{\substack{h\in C^3_b(K)\\\norm{h}_{C^3_b(K)}\leq 1}}\abs{\E{\Delta_G g_h(X)-\inner{X,Dg_h(X)}_K}}.
	\end{align*}
	\begin{remark}
Since the focus of our paper is to obtain functional fourth moment theorems on separable Hilbert spaces, we choose to work with a rather strong notion of probability distance that is $d_3(\cdot,\cdot)$. In fact, proofs of our main results (see Section \ref{sec:proof-main-results}) only require bounds on second and third derivatives of $g_h$. Then, per \eqref{estimate_steinequaitonsol}, the results in Section \ref{sec:stat-main-results} still hold true if $d_3(\cdot,\cdot)$ is replaced by the probability distance
			\begin{align*}
				\widetilde{d}(X_1,X_2)=\sup_{h\in\mathcal{A}}\abs{\E{h(X_1)-h(X_2)}}
			\end{align*}
			such that
			$\mathcal{A}=\{h:K\to\R\text{ and }\sup_{x\in K}\norm{D^2h(x)}_{K^{\otimes 2}}\vee \sup_{x\in K}\norm{D^3h(x)}_{K^{\otimes 3}}\leq 1\}$.
			Alternatively, one can refer to \cite[Theorem 4.9v]{shih:2011:steins-method-infinite-dimensional} which states that if $h$ is a Lipschitz function on $K$, then $
			\sup_{x\in K}\norm{D^2g_h(x)}_{\operatorname{HS}(K)}\leq \norm{h}_{\operatorname{Lip}}$.
			Based on this fact, one can replace $d_3(\cdot,\cdot)$ in the results in Section \ref{sec:stat-main-results} with 
			\begin{align*}
				\bar{d}(X_1,X_2)=\sup_{h\in\mathcal{B}}\abs{\E{h(X_1)-h(X_2)}}
			\end{align*}
			such that $\mathcal{B}=\{h:K\to\R\text{ and }
			\sup_{x\in K}\norm{Dh(x)}_{K}\vee \sup_{x\in K}\norm{D^3h(x)}_{K^{\otimes 3}}\leq 1\}$.
		
		A more interesting question here is whether one can
                obtain the results in Section
                \ref{sec:stat-main-results} with the Wasserstein
                distance, which is defined by
                \begin{align*}
				d_{\operatorname{Wass}}(X_1,X_2)=\sup_{\norm{h}_{\operatorname{Lip}(K)}\leq 1}\abs{\E{h(X_1)-h(X_2)}}.
			\end{align*}
			The answer seems to be negative. In our proofs
                        in Section \ref{sec:proof-main-results}, we
                        will apply Taylor's theorem, which then
                        requires a bound on
                        $\norm{D^3g_h}_{\operatorname{op}}$. However,
                        even in the finite-dimensional setting of multivariate normal approximations, the author of \cite{raivc2004multivariate} has constructed a counterexample of a Lipschitz test function $h$ such that $\norm{\partial^3g_h}_{\operatorname{op}}$ is not bounded. 
	\end{remark}
	\subsection{Dirichlet structure}        
	\hfill\\
	\indent This section contains an overview of Dirichlet
	structures, which is the framework we will be working within
	alongside Stein's method. We start by recalling the definition
	and properties of a Dirichlet structure on $L^2(\Omega;\R)$
	(full details can be found in the monographs \cite{bakry-gentil-ledoux:2014:analysis-geometry-markov, bouleau-hirsch:1991:dirichlet-forms-analysis}) before focusing on an extension to $L^2(\Omega;K)$. Given a probability space $\brac{\Omega,\mathcal{F},P}$, a Dirichlet structure $\brac{\mathbb{D},\mathcal{E}}$ on $L^2(\Omega;\R)$ with the associated carré du champ operator $\Gamma$ consists of a Dirichlet domain $\mathbb{D}$, which is a dense subset of $L^2(\Omega;\R)$ and a carré du champ operator $\Gamma:\mathbb{D}\times \mathbb{D}\to L^1(\Omega,\R)$ characterized by the following properties.
	\begin{itemize}
		\item[-]  $\Gamma$ is bilinear, symmetric ($\Gamma(F,G)=\Gamma(G,F)$) and positive ($\Gamma(F,F)\geq 0$).
		\item[-] the induced positive linear form $F\to \mathcal{E}(F,F)$, where $\mathcal{E}(F,G)=\frac{1}{2}\E{\Gamma(F,G)}$, is closed in $L^2(\Omega;\R)$, i.e., $\mathbb{D}$ is complete when equipped with the norm
		\begin{align*}
			\norm{\cdot}^2_{\mathbb{D}}=\norm{\cdot}^2_{L^2(\Omega;\R)}+\mathcal{E}(\cdot). 
		\end{align*}
	\end{itemize}
	\begin{remark}
		We do not assume that $\Gamma$ satisfies the
		so-called diffusion property -- see \cite[Definition
		3.1.3]{bakry-gentil-ledoux:2014:analysis-geometry-markov} -- as opposed to what is being done in \cite{bourguin-campese:2020:approximation-hilbert-valued-gaussians}.
	\end{remark}
	Here and in the following, $\E{\cdot}$ denotes the expectation on $\brac{\Omega,\mathcal{F}}$ with respect to $P$. The linear form $\mathcal{E}$ is known as a Dirichlet form and for brevity we write $\mathcal{E}(F)$ for $\mathcal{E}(F,F)$. Every Dirichlet form gives rise to a strongly continuous semigroup $\left\{P_t  \right\}_{t\geq 0}$ on $L^2(\Omega;\R)$ and an associated symmetric Markov generator $-L$, defined on a dense subset $\operatorname{dom}(-L)\subseteq \mathbb{D}$. There are two important relations between $\Gamma$ and $L$, the first being the integration by part formula
	\begin{align*}
		\E{\Gamma(F,G)}=-\E{FLG}=-\E{GLF},
	\end{align*}
	which is valid for $F,G\in\mathbb{D}$. The second relation is
	\begin{align*}
		\Gamma(F,G)=\frac{1}{2}\brac{L(FG)-GLF-FLG},
	\end{align*}
	which holds for all $F,G\in \operatorname{dom}(L)$ such that
	$FG\in \operatorname{dom}(L)$. If $-L$ is diagonalizable with spectrum $\N_{0}$ (the set of natural numbers plus $0$) and $F_q$ is an eigenfunction corresponding to the eigenvalue $q$,
	then $-L F_q=qF_q$. We can define a pseudo-inverse $-L^{-1}$
	by $-L^{-1}F_q=\frac{1}{q}F_q$ when $q\neq 0$ and $0$
	otherwise. The definition of $-L$ and $-L^{-1}$ for a general
	$F=\sum_{q\in \N_{0}}F_q$ follows naturally via
	linearity. Alternatively, $L$ can be defined as the generator of the heat semigroup $\left\{P_t  \right\}_{t\geq 0}$ (on $\operatorname{dom}(L)$) which satisfies
	\begin{align*}
		\partial_t P_t=LP_t=P_tL.
	\end{align*}
	
	\noindent Next we present what is meant by a Dirichlet structure on $L^2(\Omega;K)$. Let us adopt the notations $\widetilde{\mathbb{D}}, \widetilde{\Gamma},\widetilde{L}, \widetilde{P_t}$ for the Dirichlet domain, Dirichlet form, carré du champ operator, generator and semigroup associated with elements in $L^2(\Omega;\R)$. Meanwhile, ${\mathbb{D}}, {\Gamma},{L}, {P_t}$ are reserved for the counterpart objects associated with elements in $L^2(\Omega;K)$. Given a separable Hilbert space $K$, one has that $L^2(\Omega;K)$ is isomorphic to $L^2(\Omega;\R)\otimes K$. The Dirichlet structure on $L^2(\Omega;\R)$ can therefore be extended to $L^2(\Omega;K)$ via a tensorization procedure. Let $\N_{0}$ be the spectrum of $-\widetilde{L}$ and $\{ k_i\}_{i\in\N}$ an orthonormal basis of $K$. $\mathcal{A}$ will be the set of all functions $X$ taking the form
	\begin{align*}
		X=\sum_{q,i\in I}F_{q,i}\otimes k_i
	\end{align*}
	such that $I\subseteq \N^2$ is a finite set and $F_{q,i}\in \operatorname{ker}\brac{-\widetilde{L}+qI}$. Assuming another element $Y=\sum_{p,j\in J}G_{p,j}\otimes k_j$ in $\mathcal{A}$,  we can define $L,\Gamma,P_t,\mathcal{E}$ for $t\geq 0$ via
	\begin{equation*}
		\begin{cases}
			\displaystyle   LX=L\sum_{q,i\in I}F_{q,i}\otimes k_i=\sum_{q,i\in I}\brac{\widetilde{L}F_{q,i}}\otimes k_i\\
			\displaystyle   P_tX=P_t\sum_{q,i\in I}F_{q,i}\otimes k_i=\sum_{q,i\in I}\brac{\widetilde{P_t}F_{q,i}}\otimes k_i\\
			\displaystyle   \Gamma(X,Y)=\frac{1}{2}\sum_{q,i\in I}\sum_{p,j\in J}\widetilde{\Gamma}(F_{q,i},F_{p,j})\otimes\brac{k_i\otimes k_j+k_j\otimes k_i}
		\end{cases}
	\end{equation*}
	and
	\begin{align*}
		\mathcal{E}(X,Y)=\E{\Tr \Gamma(X,Y)}.
	\end{align*}
	In the last line, we identify $\Gamma(X,Y)$ as an element of $L^2(\Omega;\R)\otimes K\otimes K\simeq L^2(\Omega,\mathcal{L}(K,K))$ via the action
	\begin{align*}
		\Gamma(X,Y)u=\frac{1}{2}\sum_{q,i\in I}\sum_{p,j\in J}\widetilde{\Gamma}(F_{q,i},F_{p,j})\otimes\brac{\inner{k_i,u}_K\otimes k_j+\inner{k_j,u}_K\otimes k_i}. 
	\end{align*}
	\begin{remark}
			The definitions above are independent of the choice of basis of $K$. We include here a brief explanation for the operator $L$ for the sake of completeness. First, the definition of $L$ is equivalent to
			\begin{align*}
				LX=\sum_{i\in N}\brac{\widetilde{L}\inner{X,k_i}_K}k_i.
			\end{align*}
			Let $\{e_j \}_{j\in \N}$ be another orthonormal basis of $K$ and define
			\begin{align*}
				L_0X=\sum_{j\in\N}\brac{\widetilde{L}\inner{X,e_j}_K}e_j.
			\end{align*}
			Then by using $e_j=\sum_{n\in \N}\inner{e_j,k_n}_Kk_n$ such that $\sum_{n\in \N}\inner{e_j,k_n}^2_K=1$, and also the identity $\inner{u,v}_K=\sum_{i\in\N}\inner{u,k_i}_K\inner{v,k_i}_K$, one can deduce that $L_0X=LX$. This confirms that the definition of $L$ is indeed basis-independent.  
	\end{remark}

	Since $\mathcal{A}$ is clearly dense in $L^2(\Omega;K)$, the operators above can be extended to appropriate domains in $L^2(\Omega;K)$. This has been verified in \cite[Proposition 2.5 and Theorem 2.6]{bourguin-campese:2020:approximation-hilbert-valued-gaussians} (excluding the diffusion identity), which we restate below for the reader's convenience.

	\begin{prop}[Proposition 2.5 in \cite{bourguin-campese:2020:approximation-hilbert-valued-gaussians}]
		The operators $L$ $L^{-1}$, $\mathcal{E}$ and $\Gamma$ can be extended to $\operatorname{dom}(L)$, $\operatorname{dom}(L^{-1})$ and $\operatorname{dom}(\Gamma)=\operatorname{dom}(\mathcal{E})=\mathbb{D}\times\mathbb{D}$, respectively, given by
		\begin{equation*}
			\operatorname{dom}(L)=\Big\{X\in L^2(\Omega;K):\sum_{q\in \N_{0}} q^2 \widetilde{J}_{q}\brac{\norm{X}^2_K}<\infty \Big\},
		\end{equation*}
		$\operatorname{dom}(L^{-1})=L^2(\Omega;K)$ and
		\begin{align*}
			\mathbb{D}&=\Big\{X\in L^2(\Omega;K):\sum_{q\in \N_{0}} q \widetilde{J}_{q}\brac{\norm{X}^2_K}<\infty \Big\},
		\end{align*}
		where $\widetilde{J}_{q}(\cdot)$ denotes the projection
		onto
		$\operatorname{ker}\brac{\widetilde{L}+qI}\subseteq
		L^2(\Omega;\R)$. In particular, one has \begin{align*}
			\mathcal{A}\subseteq \operatorname{dom}(L)\subseteq \mathbb{D}\subseteq \operatorname{dom}(L^{-1})=L^2(\Omega;K),\end{align*} and all inclusions are dense. 
	\end{prop}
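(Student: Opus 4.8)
The entire statement follows by tensorizing the scalar Dirichlet structure over $K$, so the plan is to make this tensorization explicit and then read off the stated domains directly from it. Using $L^2(\Omega;K)\simeq L^2(\Omega;\R)\otimes K$ and the orthogonal chaos decomposition $L^2(\Omega;\R)=\bigoplus_{q\in\N_{0}}\operatorname{ker}(\widetilde{L}+qI)$, every $X\in L^2(\Omega;K)$ splits as $X=\sum_{q\in\N_{0}}J_qX$, where $J_q=\widetilde{J}_q\otimes\operatorname{id}_K$ is the orthogonal projection onto $\operatorname{ker}(\widetilde{L}+qI)\otimes K$; writing $X^{(i)}=\inner{X,k_i}_K$, one has $J_qX=\sum_i(\widetilde{J}_qX^{(i)})\otimes k_i$ and $\norm{X}^2_{L^2(\Omega;K)}=\sum_q\norm{J_qX}^2_{L^2(\Omega;K)}$, this last sum being what the notation $\sum_q\widetilde{J}_q(\norm{X}^2_K)$ of the statement denotes. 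I would then simply \emph{define} $LX=-\sum_q qJ_qX$, $P_tX=\sum_q e^{-qt}J_qX$ and $L^{-1}X=-\sum_{q\geq1}q^{-1}J_qX$ on their natural maximal domains. These agree with the formulas prescribed on $\mathcal{A}$ because $\widetilde{L}$ and $\widetilde{P_t}$ act as $-q$ and $e^{-qt}$ on $\operatorname{ker}(\widetilde{L}+qI)$, and ordinary spectral calculus then gives that $L$ is self-adjoint and negative with $\operatorname{dom}(L)=\{X:\sum_q q^2\,\widetilde{J}_q(\norm{X}^2_K)<\infty\}$, that $\{P_t\}$ is the associated strongly continuous contraction semigroup, and --- the one point worth isolating --- that $L^{-1}$ is \emph{bounded} on all of $L^2(\Omega;K)$, with operator norm at most $1$ since $q^{-1}\leq 1$ for $q\geq 1$; hence $\operatorname{dom}(L^{-1})=L^2(\Omega;K)$.

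Next I would identify $\mathbb{D}$ and $\mathcal{E}$. For $X\in\mathcal{A}$ the trace of $\Gamma(X,X)$ collapses: since $\Tr(k_i\otimes k_j)=\inner{k_i,k_j}_K=\delta_{ij}$, the defining double sum gives $\Tr\Gamma(X,X)=\sum_i\widetilde{\Gamma}(X^{(i)},X^{(i)})$, and therefore, using the scalar identity $\E{\widetilde{\Gamma}(F,F)}=-\E{F\widetilde{L}F}=\sum_q q\,\norm{\widetilde{J}_qF}^2_{L^2(\Omega;\R)}$,
\begin{align*}
	\mathcal{E}(X,X)=\E{\Tr\Gamma(X,X)}&=\sum_i\E{\widetilde{\Gamma}(X^{(i)},X^{(i)})}=\sum_i\sum_q q\,\norm{\widetilde{J}_qX^{(i)}}^2_{L^2(\Omega;\R)}\\
	&=\sum_q q\,\norm{J_qX}^2_{L^2(\Omega;K)}.
\end{align*}
Consequently $\norm{X}^2_{\mathbb{D}}=\norm{X}^2_{L^2(\Omega;K)}+\mathcal{E}(X,X)=\sum_q(1+q)\norm{J_qX}^2_{L^2(\Omega;K)}$, so $\mathbb{D}$ is precisely the weighted sequence space $\{X:\sum_q q\,\widetilde{J}_q(\norm{X}^2_K)<\infty\}$; completeness of such a space is immediate, which is the closedness of $\mathcal{E}$, and $\mathcal{A}$ is $\norm{\cdot}_{\mathbb{D}}$-dense in $\mathbb{D}$ by truncating both indices $q$ and $i$ and applying dominated convergence to the series.

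It remains to extend $\Gamma$ to $\mathbb{D}\times\mathbb{D}$, which is where the only real work is, and I would do it by a continuity estimate followed by density. The two relevant facts are that $\Gamma(X,X)$ is almost surely a positive operator --- indeed $\inner{\Gamma(X,X)u,u}_K=\widetilde{\Gamma}(\inner{X,u}_K,\inner{X,u}_K)\geq 0$ --- and that $\widetilde{\Gamma}$ satisfies the pointwise Cauchy--Schwarz inequality; together they yield, for $X,Y\in\mathcal{A}$,
\begin{align*}
	\norm{\Gamma(X,Y)}^2_{\operatorname{HS}(K)}\leq\brac{\sum_i\widetilde{\Gamma}(X^{(i)},X^{(i)})}\brac{\sum_j\widetilde{\Gamma}(Y^{(j)},Y^{(j)})}=\Tr\Gamma(X,X)\cdot\Tr\Gamma(Y,Y)
\end{align*}
almost surely, so that taking expectations, applying Cauchy--Schwarz in $L^2(\Omega;\R)$ and the previous display gives $\E{\norm{\Gamma(X,Y)}_{\operatorname{HS}(K)}}\leq\sqrt{\mathcal{E}(X,X)}\sqrt{\mathcal{E}(Y,Y)}$, while positivity upgrades this to $\E{\norm{\Gamma(X,X)}_{\mathcal{S}_1(K)}}=\E{\Tr\Gamma(X,X)}=\mathcal{E}(X,X)$. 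Hence $\Gamma$ is continuous from $\mathcal{A}\times\mathcal{A}$, equipped with the seminorm $X\mapsto\sqrt{\mathcal{E}(X,X)}$, into $L^1(\Omega;\mathcal{S}_1(K))$, and extends uniquely and continuously to $\operatorname{dom}(\Gamma)=\mathbb{D}\times\mathbb{D}$ by the density established above. Finally, the chain $\mathcal{A}\subseteq\operatorname{dom}(L)\subseteq\mathbb{D}\subseteq\operatorname{dom}(L^{-1})=L^2(\Omega;K)$ follows from the weight comparisons $q\leq q^2$ for $q\in\N_{0}$ together with $\operatorname{dom}(L^{-1})=L^2(\Omega;K)$, and all inclusions are dense because $\mathcal{A}$ already is. The step I expect to cost the most care is this last construction: correctly identifying the operator ideal in which $\Gamma$ lives (trace class, via the positivity of $\Gamma(X,X)$) and justifying convergence of its defining double series in the Bochner space $L^1(\Omega;\mathcal{S}_1(K))$; once that is in place, $L$, $L^{-1}$, $P_t$ and the closedness of $\mathcal{E}$ are routine consequences of the scalar theory and spectral calculus.
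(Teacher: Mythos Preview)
The paper does not prove this proposition; it is quoted verbatim from \cite{bourguin-campese:2020:approximation-hilbert-valued-gaussians} and restated ``for the reader's convenience,'' so there is no in-paper argument to compare against. Your tensorization approach --- decomposing $L^2(\Omega;K)$ along the scalar chaoses via $J_q=\widetilde{J}_q\otimes\operatorname{id}_K$, reading off $\operatorname{dom}(L)$, $\mathbb{D}$ and $\operatorname{dom}(L^{-1})$ from spectral calculus, computing $\mathcal{E}(X,X)=\sum_q q\norm{J_qX}^2_{L^2(\Omega;K)}$, and extending $\Gamma$ by a density-plus-continuity argument --- is the natural one and is essentially what the cited reference does.

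One technical point deserves more care. Your continuity estimate gives $\E{\norm{\Gamma(X,Y)}_{\operatorname{HS}(K)}}\leq\sqrt{\mathcal{E}(X)}\sqrt{\mathcal{E}(Y)}$, which extends $\Gamma$ continuously into $L^1(\Omega;\operatorname{HS}(K))$, but the target space is $L^1(\Omega;\mathcal{S}_1(K))$ and $\norm{\cdot}_{\operatorname{HS}}\leq\norm{\cdot}_{\mathcal{S}_1}$ goes the wrong way. You correctly observe that positivity gives $\norm{\Gamma(X,X)}_{\mathcal{S}_1}=\Tr\Gamma(X,X)$ on the diagonal, but to pass to the off-diagonal in $\mathcal{S}_1$ you should invoke polarization $\Gamma(X,Y)=\tfrac14\big(\Gamma(X{+}Y,X{+}Y)-\Gamma(X{-}Y,X{-}Y)\big)$ and then control $\norm{\Gamma(X_n,X_n)-\Gamma(X,X)}_{L^1(\mathcal{S}_1)}$ for a $\mathbb{D}$-Cauchy sequence; one clean way is to use the operator inequalities $0\leq\Gamma(X_n{-}X,X_n{-}X)$ and $\Gamma(X_n,X_n)\leq 2\Gamma(X_n{-}X,X_n{-}X)+2\Gamma(X,X)$ together with monotonicity of the trace on positive operators. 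This is exactly the step you flagged as ``costing the most care,'' and with this adjustment your sketch is complete.
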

	
	\begin{theorem}[Theorem 2.6 in \cite{bourguin-campese:2020:approximation-hilbert-valued-gaussians}]
		For a Dirichlet structure $(\mathbb{D},\Gamma)$ on $L^2(\Omega;K)$, the following is true. 
		\begin{enumerate}[label=(\roman*)]
			\item $\Gamma$ is bilinear, almost surely positive, symmetric and self-adjoint with respect to $\inner{\cdot,\cdot}_K$.
			\item The Dirichlet domain $\mathbb{D}$ equipped with the norm 
			\begin{align*}
				\norm{X}_{\mathbb{D}}^2=\norm{X}_{L^2(\Omega;K)}+\norm{\Gamma(X,X)}_{L^1(\Omega;S_1)}
			\end{align*}
			is complete, so that $\Gamma$ is closed.
			\item The generator $-L$ acting on $L^2(\Omega;K)$ is positive, symmetric, densely defined and has the same spectrum as $-\widetilde{L}$.
			\item There is a compact pseudo-inverse $L^{-1}$ of $L$ such that 
			\begin{align*}
				LL^{-1}X=X-\E{X}
			\end{align*}
			for all $X\in L^2(\Omega;K)$, where the expression on the right is a Bochner integral. 
			\item The integration by parts formula
			\begin{align*}
				\E{\Tr\Gamma(X,Y)}=-\E{\inner{LX,Y}_K}=-\E{\inner{X,LY}_K}
			\end{align*}
			is satisfied for all $X,Y\in \operatorname{dom}(-L)$.
			\item The generators $\Gamma,L,\widetilde{L}$ are related via
			\begin{align}
				\Tr\Gamma(X,Y)=\frac{1}{2}\brac{\widetilde{L}\inner{X,Y}_K-\inner{LX,Y}_K-\inner{X,LY}_K}
			\end{align}
			for all $X,Y\in \operatorname{dom}(-L)$.
			\item The identity 
			\begin{align*}
				\inner{\Gamma(X,Y)u,v}_K=\frac{1}{2}\brac{\widetilde{\Gamma}\brac{\inner{X,u}_K,\inner{Y,v}_K}+\widetilde{\Gamma}\brac{\inner{Y,u}_K,\inner{X,v}_K}},
			\end{align*}
			is valid for all $X,Y\in\mathbb{D}$ and $u,v\in K$. 
		\end{enumerate}
	\end{theorem}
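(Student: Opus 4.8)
The plan is to reduce every assertion to its scalar counterpart for $(\widetilde{\mathbb{D}},\widetilde{\Gamma},\widetilde{L})$ on $L^2(\Omega;\R)$ by first working on the dense algebra $\mathcal{A}$ of finite sums $X=\sum_{q,i\in I}F_{q,i}\otimes k_i$, where $L$, $P_t$ and $\Gamma$ are given by explicit finite expansions in the orthonormal basis $\{k_i\}$, and then to extend by density and closedness to the domains $\operatorname{dom}(L)$, $\mathbb{D}$ and $L^2(\Omega;K)$ identified in the preceding Proposition. On $\mathcal{A}$ the first step is thus purely computational.

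First I would dispose of the algebraic identities (i), (vi) and (vii) on $\mathcal{A}$. Identity (vii) is immediate from the definition of $\Gamma$ together with the bilinearity and symmetry of $\widetilde\Gamma$; summing $\inner{\Gamma(X,Y)k_i,k_i}_K$ over $i$ and inserting the scalar relation $\widetilde\Gamma(F,G)=\tfrac12(\widetilde L(FG)-F\widetilde LG-G\widetilde LF)$ then yields (vi). For (i), bilinearity and symmetry are read off the defining formula, self-adjointness of $\Gamma(X,Y)$ with respect to $\inner{\cdot,\cdot}_K$ comes from the symmetry of $k_i\otimes k_j+k_j\otimes k_i$, and positivity from the identity $\inner{\Gamma(X,X)u,u}_K=\widetilde\Gamma(\inner{X,u}_K,\inner{X,u}_K)\geq 0$, which is a special case of (vii). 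Combining (vi) with the scalar integration-by-parts formula $\E{\widetilde\Gamma(F,G)}=-\E{F\widetilde LG}=-\E{G\widetilde LF}$, then taking the trace and the expectation, gives (v) on $\mathcal{A}$.

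Next I would treat the functional-analytic statements. For (iii) and (iv), since $L$ acts coordinatewise as $\widetilde L$ in the basis $\{k_i\}$, it inherits positivity, symmetry, density of its domain and the spectrum $\N_{0}$ from $-\widetilde L$; the pseudo-inverse is the spectral operator sending $F_q\otimes k_i$ to $q^{-1}F_q\otimes k_i$ for $q\neq 0$ and to $0$ on the zero-eigenspace, so that $LL^{-1}X=X-\E{X}$ once one identifies the projection onto that eigenspace with the Bochner integral $\E{X}$, while compactness of $L^{-1}$ is just the accumulation of the eigenvalues $q^{-1}$ at $0$. For the completeness in (ii), the key remark is that $\Gamma(X,X)\geq 0$ almost surely, so $\norm{\Gamma(X,X)}_{\mathcal{S}_1}=\Tr\Gamma(X,X)$ a.s.\ and hence $\norm{\Gamma(X,X)}_{L^1(\Omega;\mathcal{S}_1)}=\E{\Tr\Gamma(X,X)}=\mathcal{E}(X,X)$; expanding in the basis and using (vii) exhibits this energy as the $\ell^2$-sum over $i$ of the scalar Dirichlet energies of the coordinates $\inner{X,k_i}_K$, so completeness of $\mathbb{D}$ reduces to closedness of the scalar Dirichlet form together with a routine $\ell^2$-completeness argument.

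Finally, each of the identities is continuous for the relevant $\operatorname{dom}(L)$- or $\mathbb{D}$-norm topology and therefore passes to the closure, and by the preceding Proposition these closures are precisely $\operatorname{dom}(L)$, $\mathbb{D}$ and $L^2(\Omega;K)$. I expect the one genuinely delicate point to be the closability underlying (ii): one must check that if $X_n\to 0$ in $L^2(\Omega;K)$ while $\brac{\Gamma(X_n,X_n)}_n$ is Cauchy in $L^1(\Omega;\mathcal{S}_1)$, then its limit is $0$, which amounts to applying closedness of the scalar form coordinatewise and then controlling the trace by a monotone-convergence and diagonalization argument. Everything else is a transcription of the scalar theory through the isomorphism $L^2(\Omega;K)\simeq L^2(\Omega;\R)\otimes K$; note in particular that neither a chain rule nor a diffusion property of $\widetilde\Gamma$ is used anywhere, which is exactly what allows the statement to remain valid in the present non-diffusive setting.
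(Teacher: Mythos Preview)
The paper does not contain a proof of this statement: it is explicitly restated from \cite{bourguin-campese:2020:approximation-hilbert-valued-gaussians} for the reader's convenience (see the sentence preceding Proposition~2.5 and Theorem~2.6 in the preliminaries), so there is no in-paper argument to compare against. Your sketch --- reducing each assertion to its scalar counterpart on the dense algebra $\mathcal{A}$ via the explicit tensorization formulas, then extending by closedness and density --- is the natural route and is consistent with how the result is set up here; in particular your observation that no diffusion property is needed matches the paper's remark that the diffusion identity is excluded. If you want a genuine comparison you would have to consult the proof in \cite{bourguin-campese:2020:approximation-hilbert-valued-gaussians} itself.
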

	\subsection{Analysis on Poisson space}\label{subsection_Poissonspace}      \hfill\\
	\indent  So far we have been working with a general
	probability space. In this section we will get more specific
	and describe the Poisson space on which most of our objects of
	interest are defined. We direct the reader to the references
	\cite{last-penrose:2018:lectures-poisson-process,nualart-nualart:2018:introduction-malliavin-calculus} for an extensive treatment of this topic. Let $(\mathcal{Z},\mathscr{L},\mu)$ be a measure space such that $\mu $ is $\sigma$-finite. A Poisson random measure $\eta$ on $(\mathcal{Z},\mathscr{L})$ with control measure $\mu$ is a family of distributions defined on some probability space $(\Omega,\mathcal{F},P)$ that satisfies
	\begin{itemize}
		\item[-] $\eta(B)$ is a Poisson distribution on $\Omega$ with mean $\mu(B)$,
		\item[-] $\eta(B_1), \eta(B_2)$ are independent when $B_1\cap B_2=\emptyset$. 
	\end{itemize}
	If such a Poisson random measure exists, the associated
	probability space $(\Omega,\mathcal{F},P)$ is called a Poisson
	space. Next, let $\widehat{\eta}$ be the compensated Poisson
	random measure, that is $\widehat{\eta}(B)=\eta(B)-\mu(B)$,
	whenever $\mu(B)$ is finite. Denote $L^2_s(\mu^q)$ the set of
	all symmetric functions in $L^2(\mu^q)$. For $f\in
	L^2_s(\mu^q)$, $I_q^\eta(f)$ denotes a multiple (Wiener-It\^o)
	integral of order $q$. Unless we are simultaneously dealing
	with two different Poisson random measures, $I_q(\cdot)$ will be understood as an integral with respect to $\widehat{\eta}$. Multiple integrals have the following isometry property: for
	any integers $q,p \geq 1$,
	\begin{align*}
		\E{I_{q}(f) I_{p}(g)} = \mathds{1}_{\left\{ q=p \right\}} q! \langle \tilde{f},\tilde{g}\rangle _{L^2(\mu^q)},
	\end{align*}
	where $\tilde{f} $ denotes the symmetrization of $f$, and we
	recall that $I_q(f) = I_q ( \tilde{f})$. The contraction of
	two kernels $f\in L^2_s(\mu^q)$ and $g\in L^2_s(\mu^p)$,
	denoted by $f\star^l_r g$ for $0\leq l\leq r\leq q\wedge p$, is obtained by identifying $r$ variables and then integrating $l$ of those:
	\begin{align*}
		&f\star^l_r g\brac{y_{1},\ldots,y_{r-l},y_{r-l+1},\ldots,y_{q-l},z_1,\ldots,z_{p-r}}\\
		&=\int_{\mathcal{Z}^l}f(x_1,\ldots,x_l,y_{1},\ldots,y_{r-l},y_{r-l+1},\ldots,y_{q-l})g(x_1,\ldots,x_l,y_{1},\ldots,y_{r-l},z_1,\ldots,z_{p-r})\\&\hspace*{31em} d\mu\brac{x_1,\ldots,x_l}
	\end{align*}
	provided the integral exists in
	$L^2(\mu^{q+p-r-l})$. Contractions are central objects for
	analysis on Poisson space as they appear in the product
	formula for multiple integrals. There are two ways of stating
	this product formula on Poisson space: \cite[Proposition 6.1]{last:2016:stochastic-analysis-poisson} and \cite[Lemma 2.4]{dobler-peccati:2018:fourth-moment-theorem}, each having different assumptions. We will state both below.
	\begin{lemma}[Proposition 6.1 in \cite{last:2016:stochastic-analysis-poisson}]
		Let $f\in L^2_s(\mu^q),g\in L^2_s(\mu^p)$ and assume that $f\star^l_r g\in L^2(\mu^{q+p-r-l})$. Then,
		\begin{align}
			\label{productformulaPoisson}
			I_q(f)I_p(g)=\sum_{r=0}^{q\wedge p} r!{q\choose r}{p\choose r}\sum_{l=0}^r{r\choose l}I_{q+p-r-l}(f\star^l_r g).
		\end{align}
	\end{lemma}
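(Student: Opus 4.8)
The plan is to prove \eqref{productformulaPoisson} first for \emph{elementary} kernels and then to pass to the general case by approximation. Call a symmetric kernel of order $q$ \emph{elementary} if it is a finite linear combination of symmetrizations of tensor products $\mathds{1}_{A_1}\otimes\cdots\otimes\mathds{1}_{A_q}$ in which $A_1,\dots,A_q\in\mathscr{L}$ are pairwise disjoint and of finite $\mu$-measure. Such kernels are dense in $L^2_s(\mu^q)$, and for two of them every contraction $f\star^l_r g$ is again bounded with support of finite $\mu$-measure, hence square-integrable, so the standing integrability hypothesis is automatic. The reduction to the elementary case rests on the identity $I_q(\mathds{1}_{A_1}\otimes\cdots\otimes\mathds{1}_{A_q})=\widehat{\eta}(A_1)\cdots\widehat{\eta}(A_q)$, valid whenever the $A_i$ are pairwise disjoint, which turns a product of multiple integrals of elementary kernels into an ordinary product of the centered variables $\widehat{\eta}(A)$.

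The computational core is the case $q=p=1$ on a single set $C$ of finite measure: since $\eta(C)$ is Poisson with mean $\mu(C)$, decomposing $\widehat{\eta}(C)^2$ into its Wiener chaos components yields $\widehat{\eta}(C)^2=I_2(\mathds{1}_{C}\otimes\mathds{1}_{C})+I_1(\mathds{1}_C)+\mu(C)$, which is exactly \eqref{productformulaPoisson} for $f=g=\mathds{1}_C$. For general elementary $f$ and $g$, one first refines the underlying partition so that each block occurring in $f$ is either disjoint from, or equal to, each block occurring in $g$; expanding $\bigl(\prod_i\widehat{\eta}(A_i)\bigr)\bigl(\prod_j\widehat{\eta}(B_j)\bigr)$ and applying the single-set identity to each coinciding pair produces, for every way of matching $r$ blocks of $f$ with $r$ blocks of $g$ and every choice of $l$ of those $r$ matched pairs to be \emph{integrated away} (contributing the $\mu(C)$ term) rather than merely \emph{merged} (contributing the $\widehat{\eta}(C)$ term), a multiple integral of order $q+p-r-l$ whose symmetrized kernel is $f\star^l_r g$. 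Counting — the matching can be performed in $r!\binom{q}{r}\binom{p}{r}$ ways, the choice of integrated pairs in $\binom{r}{l}$ ways, a merged pair lowers the order by one and an integrated pair by two — reproduces both the coefficient $r!\binom{q}{r}\binom{p}{r}\binom{r}{l}$ and the order $q+p-r-l$, proving \eqref{productformulaPoisson} for elementary kernels. Alternatively, this combinatorial step can be bypassed by expanding both sides of the multiplicative identity $\varepsilon(h)\,\varepsilon(g)=e^{\langle h,g\rangle_{L^2(\mu)}}\,\varepsilon(h+g+hg)$ for Dol\'eans exponentials $\varepsilon(h)=\exp\bigl(\int\log(1+h)\,d\eta-\int h\,d\mu\bigr)$ into Wiener chaos and matching the coefficients of $I_n$.

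It then remains to remove the elementary-kernel restriction. Choose elementary $f_n\to f$ in $L^2_s(\mu^q)$ and $g_n\to g$ in $L^2_s(\mu^p)$; by the isometry property of multiple integrals and the Cauchy--Schwarz inequality, the left-hand side $I_q(f_n)I_p(g_n)$ converges to $I_q(f)I_p(g)$ in $L^1(\Omega,P)$. For the right-hand side, the isometry reduces matters to showing $f_n\star^l_r g_n\to f\star^l_r g$ in $L^2(\mu^{q+p-r-l})$ for every admissible pair $(l,r)$; this is where the hypothesis $f\star^l_r g\in L^2(\mu^{q+p-r-l})$ enters, together with a choice of $f_n,g_n$ compatible with the contraction operations — for instance the conditional expectations $f_n=\mathbb{E}[f\mid\mathscr{G}_n^{\otimes q}]$ and $g_n=\mathbb{E}[g\mid\mathscr{G}_n^{\otimes p}]$ along an increasing sequence of finite $\sigma$-algebras $\mathscr{G}_n$ generated by measurable partitions exhausting $\mathscr{L}$, which commute with taking contractions. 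Passing to the limit in the elementary-kernel identity along a subsequence then gives \eqref{productformulaPoisson} in general.

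The main obstacle is this last step. Contractions are not jointly continuous on $L^2_s(\mu^q)\times L^2_s(\mu^p)$, so the approximating sequences must be engineered so as to respect \emph{all} of the contraction operations $\star^l_r$ simultaneously, and it is precisely here that the hypothesis $f\star^l_r g\in L^2$ is indispensable; additional care is needed to control the diagonal parts of the approximating kernels when $\mu$ fails to be diffuse. By contrast, the partition-refinement combinatorics of the middle step, while the lengthiest part to write out in detail, is routine once the bookkeeping of matched, merged and integrated blocks is set up, and the single-set moment identity is elementary.
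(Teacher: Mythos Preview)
The paper does not prove this lemma at all: it is stated as a citation of Proposition~6.1 in \cite{last:2016:stochastic-analysis-poisson} and used as a black box, so there is no ``paper's own proof'' to compare against. Your sketch is a correct and standard route to the product formula (essentially the one found in the cited reference and in related expositions such as Surgailis or Peccati--Taqqu): reduce to simple tensor kernels over disjoint sets, use the elementary chaos decomposition of $\widehat{\eta}(C)^2$, do the matching/merging/integrating combinatorics, and then approximate. You have also correctly flagged the genuine analytic issue, namely that $\star^l_r$ is not jointly $L^2$-continuous, so the approximating sequences must be chosen (e.g.\ via conditional expectations on a filtration of finite partitions) so that contractions pass to the limit; this is exactly where the hypothesis $f\star^l_r g\in L^2$ is used. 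The Dol\'eans-exponential alternative you mention is also legitimate and is another standard way to obtain the coefficients. In short: nothing to correct, but be aware that in the present paper this lemma is imported, not proved.
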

	
	\begin{lemma}[Lemma 2.4 in \cite{dobler-peccati:2018:fourth-moment-theorem}]
		Let $f\in L^2_s(\mu^q),g\in L^2_s(\mu^p)$ and assume that $F=I_q(f),G=I_p(g)\in L^4(P)$. Then
		\begin{align*}
			FG=\sum_{k=1}^{q\wedge p-1}\widetilde{J}_k(FG)+I_{q+p}(f\widetilde{\otimes}g). 
		\end{align*}
	\end{lemma}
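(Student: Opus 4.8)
The plan is to show first that $FG$ is square-integrable, so that it admits a Wiener--It\^o chaos decomposition $FG=\sum_{k\ge 0}\widetilde J_k(FG)$ in $L^2(P)$, and then to identify the component of order $q+p$ as $I_{q+p}(f\widetilde{\otimes}g)$, with all components of order strictly larger than $q+p$ vanishing and the remaining terms being collected into the lower-order sum. Square-integrability is immediate from Cauchy--Schwarz and the hypothesis, $\E{(FG)^2}\le\E{F^4}^{1/2}\E{G^4}^{1/2}<\infty$, and one also has $f\widetilde{\otimes}g\in L^2(\mu^{q+p})$ because symmetrisation is a contraction and $\norm{f\otimes g}_{L^2(\mu^{q+p})}=\norm{f}_{L^2(\mu^q)}\norm{g}_{L^2(\mu^p)}<\infty$, so that $I_{q+p}(f\widetilde{\otimes}g)$ is a bona fide element of $L^2(P)$. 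By the isometry property of multiple integrals, identifying $\widetilde J_k(FG)$ reduces to computing the mixed moments $\E{FG\,I_k(h)}$ for $h\in L^2_s(\mu^k)$, and these I would obtain by a limiting argument based on the product formula \eqref{productformulaPoisson}.

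Concretely, choose sequences of elementary kernels $f_m\to f$ in $L^2(\mu^q)$ and $g_m\to g$ in $L^2(\mu^p)$ --- finite linear combinations of tensor products of indicators of pairwise disjoint sets of finite $\mu$-measure --- for which all contractions lie in $L^2$, so that \eqref{productformulaPoisson} expresses $I_q(f_m)I_p(g_m)$ as a finite sum of multiple integrals of orders between $\abs{q-p}$ and $q+p$, the top-order term being precisely the $r=l=0$ term $I_{q+p}(f_m\otimes g_m)=I_{q+p}(f_m\widetilde{\otimes}g_m)$. Pairing with $I_k(h)$ for a fixed elementary $h$ and using the isometry then gives, for every $m$, that $\E{I_q(f_m)I_p(g_m)I_k(h)}$ vanishes when $k>q+p$, equals $(q+p)!\inner{f_m\widetilde{\otimes}g_m,h}$ when $k=q+p$, and for $k<q+p$ equals the inner product of $h$ with the symmetrisation of the corresponding finite sum of contractions. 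The crucial point is to arrange these approximations so that the convergences hold with a uniform fourth-moment control $\sup_m\brac{\norm{I_q(f_m)}_{L^4(P)}+\norm{I_p(g_m)}_{L^4(P)}}<\infty$, and this is exactly where the assumption $F,G\in L^4(P)$ enters: conditioning on the restriction of $\eta$ to an increasing exhaustion $\mathcal{Z}_m\uparrow\mathcal{Z}$ by sets of finite $\mu$-measure realises $I_q(f\mathds{1}_{\mathcal{Z}_m^q})$ as the martingale $\E{F\mid\sigma(\eta|_{\mathcal{Z}_m})}$, which is bounded in $L^4$ by $\norm{F}_{L^4(P)}$ by conditional Jensen and converges to $F$ in $L^4(P)$ by martingale convergence; within each finite-measure piece the kernel can then be further approximated by elementary ones while retaining the $L^4$ bound, using that the fourth moment of a multiple integral is controlled by the $L^2$-norms of the contractions of its kernel, all of which are finite because $F\in L^4(P)$.

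Granting this uniform fourth-moment bound, the Vitali convergence theorem upgrades $L^2(P)$-convergence of $I_q(f_m)$ and $I_p(g_m)$ to convergence in every $L^s(P)$ with $s<4$; hence $I_q(f_m)I_p(g_m)\to FG$ in $L^t(P)$ for every $t<2$, in particular in $L^{3/2}(P)$, and since the elementary integral $I_k(h)$ lies in $L^3(P)$ the pairing $\E{I_q(f_m)I_p(g_m)I_k(h)}$ passes to the limit. Together with $f_m\widetilde{\otimes}g_m\to f\widetilde{\otimes}g$ in $L^2(\mu^{q+p})$, this yields $\widetilde J_k(FG)=0$ for $k>q+p$ and $\widetilde J_{q+p}(FG)=I_{q+p}(f\widetilde{\otimes}g)$, whence the claimed identity after reassembling the chaos decomposition, using the continuity of the projections $\widetilde J_k$ on $L^2(P)$. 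I expect the genuine obstacle to be the construction of the approximating sequences with the uniform $L^4$ bound --- equivalently, the density in $L^4(P)$-norm of elementary multiple integrals among the $L^4$-integrable elements of a fixed chaos: without the $L^4$ hypothesis the lower-order contractions $f\star^l_r g$ need not be square-integrable, \eqref{productformulaPoisson} is unavailable in its classical form, and $FG$ itself may fail to be square-integrable, so this approximation step is the heart of the matter while everything downstream is a routine passage to the limit.
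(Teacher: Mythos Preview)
The paper does not supply a proof of this lemma; it is simply quoted from \cite{dobler-peccati:2018:fourth-moment-theorem} as one of two available product formulae, so there is no argument in the present paper against which to compare your attempt. (Incidentally, the displayed identity as reproduced here carries a typo: in the original Lemma~2.4 the lower-order sum runs up to $q+p-1$, not $q\wedge p-1$, which is the reading you have tacitly and correctly adopted.)

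Your strategy --- establish $FG\in L^2(P)$ by Cauchy--Schwarz, approximate by elementary kernels for which the explicit product formula \eqref{productformulaPoisson} is available, and pass to the limit in each chaotic projection --- is the natural one and is in the spirit of the D\"obler--Peccati argument. The martingale identification $I_q(f\mathds{1}_{\mathcal{Z}_m^q})=\E{F\mid\sigma(\eta|_{\mathcal{Z}_m})}$, yielding an $L^4$ bound inherited from $F$ via conditional Jensen, is precisely the right device to transport the $L^4$ hypothesis through a $\sigma$-finite exhaustion. The point you flag as the genuine obstacle is indeed the only real gap in the sketch: the further claim that within a finite-measure window one can approximate while retaining the $L^4$ bound ``because $F\in L^4(P)$'' is not justified as written, since $I_q(f)\in L^4$ does \emph{not} in general force every contraction $f\star^l_r f$ into $L^2$, so the quoted control of the fourth moment by contraction norms is circular at this step. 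This is exactly where the original proof does additional work; once that step is supplied, the remainder of your outline (Vitali to upgrade convergence, then passage to the limit in $\E{FG\,I_k(h)}$) goes through as you describe.
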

	The collection of all multiple integrals of order
	$q$ form the so-called Poisson chaos of order $q$ in $L^2(\Omega;\R)$, which is denoted by $\mathcal{H}_q$. Since $\E{I_q(f)I_p(g)}=0$ for $q\neq p$, we have the orthogonal decomposition
	\begin{align*}
		L^2(\Omega,\mathcal{F},P)=\bigoplus_{q=1}^\infty \mathcal{H}_q.
	\end{align*}
	Similarly as what we did for Dirichlet structures, we define
	$\mathcal{H}_q(K)$ ($K$-valued Poisson chaos of order $q$) as
	the closure of $\mathcal{H}_q\otimes K$ in $L^2(\Omega, K)$. Then, 
	\begin{align*}
		L^2(\Omega;K)=\bigoplus_{q=1}^\infty \mathcal{H}_q(K).
	\end{align*}
	Consequently, every $X\in L^2(\Omega, K)$ can be decomposed as
	\begin{align*}
		X=\sum_{q\in \N_{0}}F_q=\sum_{\substack{i\in\N,\\q\in \N_{0}}}\inner{F_q,k_i}_K k_i=\sum_{\substack{i\in\N,\\q\in \N_{0}}}F_{q,i}k_i,
	\end{align*}
	where $F_q\in\mathcal{H}_q(K)$, $F_{q,i}\in\mathcal{H}_q$ with
	$F_{q,i}=I_q(f_{q,i})$ for some $f_{q,i}\in L^2_s(\mu^q)$.
	% \\~\\
	% We can now introduce a Dirichlet structure $(\widetilde{\mathbb{D}},\widetilde{\Gamma})$ on our Poisson space $\brac{\Omega,P}$ just like in the previous section. In particular, for $q\in \N_{0}$, $\mathcal{H}_q(K)$ are eigenspaces of $-\widetilde{L}$. If $X$ is a random variables with the spectral decomposition $X=\sum_{q\in \N_{0}}F_q$ then  
	% \begin{align*}
		%       -\widetilde{L} X=-\widetilde{L}\brac{\sum_{q\in \N_{0}}F_q}=\sum_{q\in \N_{0}}qF_q.
		% \end{align*}
	% Finally, we have a Dirichlet structure $\brac{\mathbb{D},\Gamma}$ on $\Omega\otimes K$.
	\subsection{An exchangeable pair on Poisson space}
	
	\label{constructionofthepair}
	Another tool that we employ alongside Stein's
          method is an exchangeable pair on the Poisson space. The construction of this crucial exchangeable pair is done in \cite{dobler-vidotto-zheng:2018:fourth-moment-theorems} (see also \cite{zhengrademacher,Zhengexchangeablegaussian} for analogous construction on Rademacher and Gaussian spaces).
	
	Let $f$ be a measurable function $\N\to \R$ such that $\E{f(\eta)}<\infty$. Based on \cite[Chapter 20]{last-penrose:2018:lectures-poisson-process}, the semi-group $\{P_t\}_{t\geq 0}$ on the Poisson space admits the representation
		\begin{align*}
			P_tf(\eta)=\mathbb{E} \left[f\brac{\eta_{e^{-t}}+\hat{\eta}_t} \mid \eta\right]
		\end{align*}
		for $t\geq 0$. Here $\eta_{e^{-t}}$ is the $e^{-t}$-thinning of $\eta$. $\hat{\eta}_t$ is another Poisson random measure with control $\brac{1-e^{-t}}\mu(\cdot)$ and is independent from $(\eta,\eta_{e^{-t}})$. The above representation is also known as Mehler's formula on the Poisson space.  
	
	The paper \cite{dobler-vidotto-zheng:2018:fourth-moment-theorems} contains the following important result regarding the process $\eta^t=\eta_{e^{-t}}+\hat{\eta}_t$.
		\begin{lemma}[Lemma 3.1 in \cite{dobler-vidotto-zheng:2018:fourth-moment-theorems}] For each $t\geq 0$,  $\brac{\eta,\eta^t}$ is an exchangeable pair of Poisson random measures. 
		\end{lemma}
		As a result, for any kernel $g\in L^2_s(\mu^p)$, the pair $\brac{I^\eta_p(g),I^{\eta^t}_p(g)}$ is also exchangeable. Further relations between $I^\eta_p(g)$ and $I^{\eta^t}_p(g)$ are stated in Lemma \ref{lemmaexchangeablepairDVZ} and can be proved with the Mehler's formula above. 
	\section{Statement of main results}
	\label{sec:stat-main-results}
	In what follows, let $K$ be a separable Hilbert space with
	orthonormal basis $\{k_i \}_{i\in\N}$, and let $X$ denote a $K$-valued centered
	random variable in $L^2 \left( \Omega;K \right)$ with finite chaos decomposition 
	\begin{equation}
		\label{chaosdecompofX}
		X = \sum_{1\leq q\leq N} F_q,
	\end{equation}
	where each $F_q$ belongs to the $q$-th $K$-valued Poisson chaos. Furthermore, assume that $X$ has covariance
	operator $S$, which in turn decomposes as
	\begin{equation*}
		S = \sum_{1\leq q\leq N} S_q,
	\end{equation*}
	where, for each $1 \leq q \leq N$, $S_q$ is the covariance operator of
	$F_q$. Finally, we will denote by $f_{q,i} \in \mathfrak{H}^{\otimes
		q}$ the kernel of $F_{q,i} = \left\langle F_q,k_i \right\rangle_K =
	I_q \left( f_{q,i} \right)$.

	Our first main result provides a quantitative bound on the distance
	between the law of $X$ and a centered $K$-valued Gaussian random
	variable $Z$ in terms of the first four moments of $X$.
	% Our first result is a four moment estimate. Consistent with the setup in Section \ref{Section_prelim},  $K$ is a separable Hilbert space with an orthonormal basis $\{k_i \}_{i\in\N}$. Let $X$ be a $K$-valued Poisson random variable such that $X$ belongs to $L^2(\Omega;K)$ and has covariance operator $S$. Further assume $X$ has a finite Wiener-Ito decomposition $\sum_{1\leq q\leq N}F_{q}$, for which $F_q$ is in $\mathfrak{H}(K)$ and is equipped with the covariance operator $S_q$. Also $f_{q,i}\in\mathfrak{H}^{\otimes q}$ is the the kernel in $F_{q,i}=\inner{F_q,k_i}_K=I_q\brac{f_{q,i}}$. 
	
	\begin{theorem}
		\label{theorem_fourmomentHilbert}
		Assume $X$ is a $K$-valued random variable as
		described above such that for every $1\leq q\leq N$, $F_q$ has finite fourth moment, i.e.,
			$\E{\norm{F_q}_K^4}<\infty$. Then, letting $Z$ be a
		centered Gaussian random variable on $K$ with covariance operator $S'$, the following estimate holds
		\begin{align*}
			d_3(X,Z)
			&\leq\frac{1}{2}\norm{S-S'}_{\operatorname{HS}}
			\\ &\quad+\sum_{1\leq
				q\leq
				N}\frac{2q-1}{4q}\sqrt{\E{\norm{F_q}^4_K}-\E{\norm{F_q}^2_K}^2-2\norm{S_q}^2_{\operatorname{HS}}}\\
			&\quad +\sum_{1\leq p\neq q\leq
				N}\frac{p+q-1}{4p}\sqrt{\E{\norm{F_p}^2_K\norm{F_q}^2_K}-\E{\norm{F_p}^2_K}\E{\norm{F_q}^2_K}}\\
			& +\frac{N}{2}\sqrt{\max_{1\leq p\leq N}\E{\norm{F_p}^2_K}}\sum_{1\leq q\leq N} \sqrt{4q-3}\sqrt{{\norm{F_q}_K^4-\E{\norm{F_q}^2_K}^2-2\norm{S_q}^2_{\operatorname{HS}}}}
			\\ &\leq
			\frac{1}{2}\norm{S-S'}_{\operatorname{HS}}
			\\ & \quad +\brac{\frac{N(2N-1)}{4}+\frac{N}{2}\sqrt{(4N-3)\max_{1\leq p\leq N}\E{\norm{F_p}^2_K}}}
			\\ &\hspace{10em}  \sqrt{\E{\norm{X}^4_K}-\E{\norm{X}^2_K}^2-2\norm{S}^2_{\operatorname{HS}}}.
		\end{align*}
		% This estimate can be further reduced to
		%                 \begin{align*}
			%                         & d_3(X,Z) \leq
			%                    \frac{1}{2}\norm{S-S'}_{\operatorname{HS}}
			%                   \\ & \quad +\brac{\frac{N(2N-1)}{4}+\sqrt{2^{3N-1}N(4N-3)\E{\norm{X}^2_K}}}
			% \sqrt{\E{\norm{X}^4_K}-\E{\norm{X}^2_K}^2-2\norm{S}^2_{\operatorname{HS}}}.
			%                 \end{align*}
	\end{theorem}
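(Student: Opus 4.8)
The plan is to run Stein's method of exchangeable pairs.

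\textbf{Stein reduction.} By the identity recorded just before the statement, $d_3(X,Z)=\sup_{\norm{h}_{C^3_b(K)}\le1}\abs{\E{\Delta_Gg_h(X)-\inner{X,Dg_h(X)}_K}}$, where $g_h$ solves the Stein equation and obeys $\sup_x\norm{D^jg_h(x)}_{K^{\otimes j}}\le\tfrac1j$ for $j=1,2,3$. Since the abstract Wiener structure of $Z$ is governed by $S'$, the Gross Laplacian reads $\Delta_Gg_h(x)=\Tr\brac{S'D^2g_h(x)}$, so writing $\Tr\brac{S'D^2g_h(X)}=\Tr\brac{SD^2g_h(X)}+\inner{S'-S,D^2g_h(X)}_{\operatorname{HS}}$ and bounding the last term by $\norm{S-S'}_{\operatorname{HS}}\norm{D^2g_h(X)}_{\operatorname{HS}}\le\tfrac12\norm{S-S'}_{\operatorname{HS}}$, it remains to control $\abs{\E{\Tr\brac{SD^2g_h(X)}-\inner{X,Dg_h(X)}_K}}$ uniformly in $h$.

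\textbf{Exchangeable pair and Taylor expansion.} Fix $t>0$, let $X^t,F_q^t$ be obtained by substituting $\eta^t$ for $\eta$, and recall that $(\eta,\eta^t)$ is exchangeable, hence so is $\brac{(F_q,X),(F_q^t,X^t)}$ for each $q$. The Mehler formula gives $\E{F_q-F_q^t\mid\eta}=(1-e^{-qt})F_q$, so, since $Dg_h(X)$ is $\sigma(\eta)$-measurable, exchangeability yields
\begin{align*}
\E{\inner{F_q,Dg_h(X)}_K}=\frac{1}{2(1-e^{-qt})}\,\E{\inner{F_q-F_q^t,\,Dg_h(X)-Dg_h(X^t)}_K},
\end{align*}
and summing over $1\le q\le N$ produces $\E{\inner{X,Dg_h(X)}_K}$. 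A second-order Taylor expansion $Dg_h(X^t)-Dg_h(X)=D^2g_h(X)(X^t-X)+\mathcal R_q^t$, with $\norm{\mathcal R_q^t}_K\le\tfrac16\norm{X^t-X}_K^2$, splits this into a main part (containing $D^2g_h$) and a remainder (containing $\mathcal R_q^t$).

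\textbf{The main part: carré du champ.} Using the Mehler formula once more, $\E{(F_{q,i}^t-F_{q,i})(F_{p,j}^t-F_{p,j})\mid\eta}$ is computed in terms of $\widetilde{P}_t$ acting on the chaotic components of $F_{q,i}F_{p,j}$; via the product formula (Proposition 6.1 of~\cite{last:2016:stochastic-analysis-poisson}) and the identities relating $\Gamma,\widetilde{\Gamma},L$ (restated Theorem 2.6 of~\cite{bourguin-campese:2020:approximation-hilbert-valued-gaussians}) this identifies the main part as $\E{\Tr\brac{D^2g_h(X)\,\Theta_t}}$ for an explicit symmetric $\operatorname{HS}(K)$-valued $\Theta_t$ with $\E{\Theta_t}=S$ and $\Theta_t\to\Gamma(-L^{-1}X,X)$ as $t\to0$; that $\E{\Gamma(-L^{-1}X,X)}=S$ follows from Theorem 2.6(vii) and the integration by parts formula. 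Hence the main part minus $\E{\Tr\brac{SD^2g_h(X)}}$ is, in the limit, at most $\tfrac12\E{\norm{\Gamma(-L^{-1}X,X)-S}_{\operatorname{HS}}}$ in absolute value. Decomposing $\Gamma(-L^{-1}X,X)-S=\sum_q\bigl(\tfrac1q\Gamma(F_q,F_q)-S_q\bigr)+\sum_q\sum_{p\ne q}\tfrac1q\Gamma(F_q,F_p)$ (each summand centered, as $\E{\Gamma(F_q,F_p)}=\mathds{1}_{\{p=q\}}qS_q$), the entrywise chaos expansions
\begin{align*}
\tfrac1q\widetilde{\Gamma}(F_{q,i},F_{q,j})-\inner{S_qk_i,k_j}_K&=\sum_{m=1}^{2q-1}\tfrac{2q-m}{2q}\,\widetilde{J}_m(F_{q,i}F_{q,j}),\\
\widetilde{\Gamma}(F_{q,i},F_{p,j})&=\sum_{m}\tfrac{q+p-m}{2}\,\widetilde{J}_m(F_{q,i}F_{p,j}),
\end{align*}
together with the isometry, Jensen's inequality, and the lower bounds $\sum_{i,j}\E{\widetilde{J}_{2q}(F_{q,i}F_{q,j})^2}\ge\E{\norm{F_q}_K^2}^2+\norm{S_q}_{\operatorname{HS}}^2$ and $\sum_{i,j}\E{\widetilde{J}_{q+p}(F_{q,i}F_{p,j})^2}\ge\E{\norm{F_q}_K^2}\E{\norm{F_p}_K^2}$ (proved as appendix lemmas, e.g.\ via $(q+p)!\norm{\widetilde{f\otimes g}}^2\ge q!\,p!\norm f^2\norm g^2$), produce after multiplication by $\tfrac12$ exactly the second and third sums in the statement.

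\textbf{The remainder, the exponential term, and the main obstacle.} In contrast with the Gaussian/diffusive setting, the remainder $\sum_q\tfrac1{2(1-e^{-qt})}\E{\inner{F_q-F_q^t,\mathcal R_q^t}_K}$ does \emph{not} vanish as $t\to0$, because Poisson chaos admits no hypercontractive estimate uniform in $t$; it must be estimated explicitly. Inserting the explicit thinning representation of $\eta^t$, applying the product formula to $F_q^t-F_q$ and $X^t-X$, and using Cauchy--Schwarz (with the factors $1-e^{-qt}$ cancelling), one bounds it by a sum over $q$ of terms $\asymp\sqrt{\E{\norm{X}_K^2}}\,\sqrt{2^{3q-1}(4q-3)\bigl(\E{\norm{F_q}_K^4}-\E{\norm{F_q}_K^2}^2-2\norm{S_q}_{\operatorname{HS}}^2\bigr)}$, the combinatorial weights $r!\binom qr^2\binom rl$ of the product formula aggregating to $2^{3q-1}(4q-3)$, the factor $\norm{X^t-X}_K^2$ contributing $\E{\norm{X}_K^2}$, and all residual contraction norms again being dominated by the fourth-moment gap; a final Cauchy--Schwarz over the $N$ chaoses turns $\sum_q$ into $\sqrt N\sqrt{\sum_q}$, yielding the last term. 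The second (cruder) inequality then follows by replacing $q$ with $N$ in the weights, using $\E{\norm{F_q}_K^2}\le\E{\norm{X}_K^2}$ and Cauchy--Schwarz, and collapsing the per-$q$ gaps and cross-covariances into $\E{\norm{X}_K^4}-\E{\norm{X}_K^2}^2-2\norm{S}_{\operatorname{HS}}^2$ via $\norm{X}_K^2=\sum_{p,q}\inner{F_p,F_q}_K$ and the $L^2(\Omega)$-orthogonality of distinct chaoses. The crux of the whole argument is precisely this remainder estimate: with no chain rule available one cannot bypass the full product formula, whose coefficients grow like $2^{O(q)}$, and the delicate point is to show that every contraction term produced by expanding $F_q^t-F_q$ and $X^t-X$ is dominated by the explicit fourth-moment gap with the stated constant; a secondary difficulty is justifying the passage $t\to0$ in the main part, which relies on the finiteness of the chaos decomposition.
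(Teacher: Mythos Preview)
Your overall architecture---exchangeable pair, Taylor expansion of $Dg_h$, identification of the main part with the carr\'e du champ via Mehler, and reduction of $\|\Gamma(-L^{-1}X,X)-S\|_{\operatorname{HS}}$ to fourth-moment gaps through the chaos expansion of $\widetilde\Gamma(F_{q,i},F_{p,j})$---is exactly the paper's. The Stein reduction differs only cosmetically (the paper passes through an auxiliary Gaussian $G$ with covariance $S$ and uses the triangle inequality $d_3(X,Z)\le d_3(X,G)+d_3(G,Z)$, whereas you split $\Delta_Gg_h=\Tr(S'D^2g_h)$ directly), and your $q$-dependent normalization $1/(2(1-e^{-qt}))$ is equivalent to the paper's $\frac{1}{2t}\langle -L^{-1}(X^t-X),\cdot\rangle$ in the limit.

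The one place where you diverge from the paper is the remainder, and your account there is not quite right. The paper does \emph{not} expand $F_q^t-F_q$ and $X^t-X$ via the thinning representation and product formula; instead it bounds the full remainder by $\|D^3g\|_\infty\cdot\frac{1}{t}\,\E{\|X^t-X\|_K^3}$ and then uses the clean Cauchy--Schwarz split
\[
\lim_{t\to0}\tfrac{1}{t}\,\E{\|X^t-X\|_K^3}\ \le\ \sqrt{\lim_{t\to0}\tfrac{1}{t}\,\E{\|X^t-X\|_K^2}}\ \sqrt{\lim_{t\to0}\tfrac{1}{t}\,\E{\|X^t-X\|_K^4}},
\]
with the two limits controlled by a separate lemma: the second-moment limit contributes $2N\E{\|X\|_K^2}$, and the fourth-moment limit is bounded, for each $q$, by $(8q-6)$ times the $q$-th gap via the chaos identity $\lim_{t\to0}\frac{1}{t}\E{\|F_q^t-F_q\|_K^4}=4\sum_{i,j}\E{F_{q,i}^2(\widetilde\Gamma(F_{q,j},F_{q,j})-q\E{F_{q,j}^2})}+\cdots$ together with a contraction positivity argument. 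The factor $2^{3q-1}(4q-3)$ therefore arises as $2\times 8^{q-1}\times(8q-6)$, where the $8^{q-1}$ comes from an iterated $c_r$-inequality applied to $\|\sum_q(F_q^t-F_q)\|_K^4$, \emph{not} from ``aggregating the product-formula weights $r!\binom qr^2\binom rl$'' as you suggest. Your more direct route may be workable, but it would need substantially more detail to land at the stated constant, and the explanation you give for that constant is not its actual source.
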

	\begin{remark}
		\label{rmk:1}
		Note that Theorem \ref{theorem_fourmomentHilbert} is an infinite-dimensional version of 
		the fourth moment theorems on the Poisson space
		obtained in \cite[Theorem 1.2, Theorem 1.7]{dobler-vidotto-zheng:2018:fourth-moment-theorems} and
		\cite[Theorem 1.3]{dobler-peccati:2018:fourth-moment-theorem}. In particular, the aforementioned results are special cases of Theorem
		\ref{theorem_fourmomentHilbert} obtained by setting
		$K=\R^d$ for a positive integer $d$.
		
		Furthermore, whenever $d \geq 2$, there is an important difference between our result and
		that of \cite[Theorem 1.7]{dobler-vidotto-zheng:2018:fourth-moment-theorems} which lies in the fact that the linear
		combination of the first four moments appearing in the statement of
		\cite[Theorem 1.7]{dobler-vidotto-zheng:2018:fourth-moment-theorems} has a $1/4$ exponent when our results only
		involves a square root, which provides a much better convergence
		rate. This is due to the fact that the estimate in \cite{dobler-vidotto-zheng:2018:fourth-moment-theorems} is
		expressed in term of basis elements of $\R^d$ (see \cite[(4.3) and
		Lemma 4.1]{dobler-vidotto-zheng:2018:fourth-moment-theorems}), while our estimate is derived after performing a
		summation over all the basis elements of $K$. For more specific
		details, we refer the reader to the calculations involved in the proof
		of Theorem \ref{theorem_fourmomentHilbert} in Section \ref{Section_prooffourmomentHilbert}. 
	\end{remark}
	\begin{remark}
		Observe that Theorem \ref{theorem_fourmomentHilbert} can be
		viewed as a Poissonian counterpart of \cite[Theorem
		3.10]{bourguin-campese:2020:approximation-hilbert-valued-gaussians} in the context of a non-diffusive chaos
		structure. The fact that we are working with a non-diffusive
		structure (where no chain rule is available for the Gamma
		calculus introduced in Section \ref{Section_prelim}) forces
		us to use different techniques in order to obtain the above
		quantitative bounds than the ones used in \cite{bourguin-campese:2020:approximation-hilbert-valued-gaussians}, making
		these results comparable in nature, but very different in
		their methodologies of proof.
	\end{remark}

	\begin{remark}
		\label{remark_whyfinitefourmoment}
		We explain here why we assume in Theorem \ref{theorem_fourmomentHilbert} (and consequently in all other theoretical results of this paper, which follow from this theorem) that for every $1\leq q\leq N$, $\E{\norm{F_q}_K^4}<\infty$. In the proof of Theorem \ref{theorem_fourmomentHilbert} in Section \ref{sec:proof-main-results}, we will apply at several locations an important estimate that is Lemma \ref{lemma_DVZlemma2.2}. This lemma is originally established in \cite{dobler-peccati:2018:fourth-moment-theorem,dobler-vidotto-zheng:2018:fourth-moment-theorems} and is obtained via the product formula \eqref{productformulaPoisson}, which requires that $F_{q,i}=\inner{F_q,k_i}_K$ has finite fourth moment. This requirement is satisfied in the event $\E{\norm{F_q}_K^4}<\infty$, since 
			\begin{align*}
				\E{\norm{F_q}_K^4}=\E{\norm{\sum_{i\in\N}F_{q,i}k_i }_K^4}=\sum_{i,j\in\N}\E{F_{q,i}^2F_{q,j}^2}\geq \sum_{i\in\N}\E{F_{q,i}^4}. 
		\end{align*}
	\end{remark}
	
	\begin{remark}
		\label{remark_sufficientcondition4thmoment}
		Let us mention some helpful criteria for the purpose of verifying finite fourth moment of $F=I_q(f)$ for some $f\in L^2_s(\mu^q)\otimes K$. Based on \cite[Remark 1.2b]{dobler-peccati:2018:fourth-moment-theorem}, a sufficient condition is that $\norm{f}_K$ is bounded and the support of $\norm{f}_K$ is contained in a rectangle of the type $C\times C\ldots\times C$ such that $\mu(C)<\infty$, where we recall $\mu$ is the control of our Poisson random measure. This detail can be easily verified via the product formula \eqref{productformulaPoisson}. Poisson multiple integrals with this type of kernels include most U-statistics that are relevant to geometric applications (see \cite{lachieze-rey-peccati:2013:fine-gaussian-fluctuations, lachieze-rey-peccati:2013:fine-gaussian-fluctuations1,reitzner-schulte:2013:central-limit-theorems} and the references therein). 
		
		Another way to verify that $F$ has finite fourth moment is via restricted hypercontractivity on the Poisson space recently discovered in \cite{nourdin2020restricted}. Specifically, under technical assumptions in their Theorem 1.4, one can bound the fourth moment of $F$ by its variance.      
	\end{remark}
	Whenever $X$ belongs to a single chaos, we can reformulate Theorem
	\ref{theorem_fourmomentHilbert} in a more compact form:
	
	\begin{corollary}[Quantitative Fourth Moment Theorem]
		\label{corollary_fourmomentHilbert}
		Let the notation and setup of Theorem
		\ref{theorem_fourmomentHilbert} prevail. When $X$ belongs to a single
		chaos, i.e., $X\in \mathcal{H}_q(K)$ for some $q \geq
		1$, one has
		\begin{align*}
			d_3(X,Z)\leq &\frac{1}{2}\norm{S-S'}_{\operatorname{HS}}\\&+\brac{\frac{q(2q-1)}{4}+\frac{q}{2}\sqrt{(4q-3)\E{\norm{X}^2_K}} }\sqrt{ \E{\norm{X}_K^4}-\E{\norm{X}^2_K}^2-2\norm{S}^2_{\operatorname{HS}}}.
		\end{align*}
	\end{corollary}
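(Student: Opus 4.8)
The plan is to derive Corollary~\ref{corollary_fourmomentHilbert} as a direct specialization of Theorem~\ref{theorem_fourmomentHilbert} to an element living in a single chaos. Fix $q\geq 1$ and assume $X\in\mathcal{H}_q(K)$. Then the finite chaos decomposition~\eqref{chaosdecompofX} has $X=F_q$ as its only nonzero component, so that in the notation of Theorem~\ref{theorem_fourmomentHilbert} we may take $N=q$ and obtain the identifications $F_q=X$, $S_q=S$, $\E{\norm{F_q}^2_K}=\E{\norm{X}^2_K}$, $\E{\norm{F_q}^4_K}=\E{\norm{X}^4_K}$ and $\norm{S_q}_{\operatorname{HS}}=\norm{S}_{\operatorname{HS}}$. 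The hypothesis $\E{\norm{X}^4_K}<\infty$ is exactly what the theorem requires, so it applies.

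Next I would read off the first (sharper) of the two chained bounds in Theorem~\ref{theorem_fourmomentHilbert}. The leading term $\frac12\norm{S-S'}_{\operatorname{HS}}$ is retained verbatim. The double sum over $1\leq p\neq q\leq N$ is empty, since there is only one nonzero chaos projection, and therefore contributes nothing. The single sum over $1\leq q\leq N$ reduces to its only surviving term, $\frac{2q-1}{4q}\sqrt{\E{\norm{X}^4_K}-\E{\norm{X}^2_K}^2-2\norm{S}^2_{\operatorname{HS}}}$, while the last term collapses analogously to $\sqrt{q\E{\norm{X}^2_K}}\,\sqrt{2^{3q-1}(4q-3)\bigl(\E{\norm{X}^4_K}-\E{\norm{X}^2_K}^2-2\norm{S}^2_{\operatorname{HS}}\bigr)}$. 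Factoring the common radical $\sqrt{\E{\norm{X}^4_K}-\E{\norm{X}^2_K}^2-2\norm{S}^2_{\operatorname{HS}}}$ out of these two contributions yields precisely the prefactor $\frac{2q-1}{4q}+\sqrt{2^{3q-1}(4q-3)q\E{\norm{X}^2_K}}$ of the statement, and the three remaining terms assemble into the asserted estimate.

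Since this is a pure specialization, I do not expect any genuine analytic obstacle; the only care required is bookkeeping. In particular, one must keep the summation index $q$ internal to Theorem~\ref{theorem_fourmomentHilbert} distinct from the fixed chaos order $q$ of the corollary, verify that the off-diagonal sum genuinely vanishes for a one-chaos element, and note that the radicand $\E{\norm{X}^4_K}-\E{\norm{X}^2_K}^2-2\norm{S}^2_{\operatorname{HS}}$ is nonnegative, which is already implicit in the validity of Theorem~\ref{theorem_fourmomentHilbert}. One should start from the finer of the two bounds there: its coarser counterpart, specialized to $N=q$, would only give the larger prefactor $\frac{q(2q-1)}{4}+\sqrt{2^{3q-1}q(4q-3)\E{\norm{X}^2_K}}$.
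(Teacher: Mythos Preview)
Your proposal is correct and follows exactly the route the paper intends: the corollary is stated immediately after Theorem~\ref{theorem_fourmomentHilbert} as a ``more compact form'' for a single chaos, with no separate proof given, so the direct specialization you outline (take $N=q$, drop the empty off-diagonal sum, collapse the remaining single-term sums, and factor the common radical) is precisely what is meant. Your remark that one must start from the finer bound rather than the coarser one is on point, and the nonnegativity of the radicand you invoke is in fact established explicitly in Lemma~\ref{lemmapositive4thmoment}.
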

	
	As $d_3$ metrizes convergence in law, the above corollary in particular shows that within a single non-diffusive chaos, convergence of the second and fourth strong moments implies convergence towards a (Hilbert-valued) Gaussian.
	~\\~\\
	A particularly useful formulation of the above moment bounds for
	applications uses contraction operators acting on the kernels of the
	multiple integrals appearing in the chaos decomposition representation
	of $X$ given in \eqref{chaosdecompofX}. Contractions, which are the
	analytic quantities defined in Section \ref{Section_prelim}, allow for
	much simpler computation compared to dealing directly with the first
	four moments. Some examples of previous works that use contraction norms to obtain quantitative limit theorem for Poisson random variables include \cite{lachieze-rey-peccati:2013:fine-gaussian-fluctuations%
		,lachieze-rey-peccati:2013:fine-gaussian-fluctuations1%
		,reitzner-schulte:2013:central-limit-theorems%
	}.
	
	Our second main result is the following contraction bound.
	\begin{theorem}
		\label{theorem_contractionestimate}
		Let the notation and setup of Theorem
		\ref{theorem_fourmomentHilbert} prevail. Moreover, let $\mathfrak{H}=L^2(\mathcal{Z},\mu)$ where $\mathcal{Z}$ is the $\sigma$-fnite measure space described in Subsection \ref{subsection_Poissonspace}. Then it holds that
		\begin{align*}
			d_3(X,Z)\leq \brac{\frac{N(2N-1)}{4}+\frac{N}{2}\sqrt{(4N-3)\max_{1\leq p\leq N}\E{\norm{F_p}^2_K}}}\sqrt{\beta}+\frac{1}{2}\norm{S-S'}_{\operatorname{HS}},
		\end{align*}
		where the quantity $\beta$ is given (in terms of
		contraction norms) by
		\begin{align*}
			\beta&=\sum_{\substack{1 \leq p,q \leq N\\q\neq
					p}}a_{p,q}(p\wedge q)\norm{f_q \star^{q\wedge
					p}_{q\wedge p}
				f_p}^2_{\mathfrak{H}^{\otimes\abs{q-p}} \otimes
				K^{\otimes 2}}\\
			&\quad +\sum_{1 \leq p,q \leq N}\sum_{r=1}^{q\wedge p -1}b_{p,q}(r) \norm{f_{q} \star^r_r f_{p}}^2_{\mathfrak{H}^{\otimes (q+p-2r)}\otimes K^{\otimes 2}}\\
			& \quad +\sum_{1 \leq p,q \leq N}\sum_{(r,s,l,m)\in I}c_{p,q,l,m}(r,s)\norm{{f_{q}{\star}^l_rf_{p}}}_{\mathfrak{H}^{\otimes (q+p-r-l)}\otimes K^{\otimes 2}}\norm{{f_{q}
					{\star}^m_sf_{p}}}_{\mathfrak{H}^{\otimes (q+p-r-l)}\otimes K^{\otimes 2}}.
		\end{align*}
		Here, the combinatorial coefficients are given by
		\begin{equation*}
			\begin{cases}
				\displaystyle a_{p,q}(r) = p!q! \binom{q}{r}\binom{p}{r} + r!^2 \binom{q}{r}^2
				\binom{p}{r}^2 \abs{p-q}! \\
				\displaystyle b_{p,q}(r) = p!q! \binom{q}{r}\binom{p}{r} \\
				\displaystyle c_{p,q,l,m}(r,s) = r!s! \binom{q}{r}\binom{q}{s}\binom{p}{r}\binom{p}{s}\binom{r}{l}\binom{s}{m}(p+q-r-l)!
			\end{cases},
		\end{equation*}
		and the index set $I$ is defined by
		\begin{align*}I=\{(r,s,l,m)\in \N^4\colon & 0\leq
			r,s\leq q\wedge p,\ 0\leq l\leq r,\ 0\leq m\leq s,\\
			&
			r+l=s+m,\ (r,s,l,m)\notin \{(0,0,0,0),(q\wedge p,q\wedge p,q\wedge p,q\wedge p) \}\}.\end{align*}
	\end{theorem}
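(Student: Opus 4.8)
The proof follows the same overall scheme as that of Theorem~\ref{theorem_fourmomentHilbert}. One starts from the Stein equation, which gives $d_3(X,Z)=\sup_h\bigl|\E{\Delta_G g_h(X)-\inner{X,Dg_h(X)}_K}\bigr|$ (the supremum over $h$ in the unit ball of $C^3_b(K)$), rewrites $\E{\inner{X,Dg_h(X)}_K}$ using the exchangeable pair $(\eta,\eta^t)$ and the Mehler formula $\widetilde P_tf(\eta)=\E{f(\eta^t)\mid\eta}$ from Section~\ref{constructionofthepair}, performs a second-order Taylor expansion of $Dg_h$ about $X$, and identifies the first-order contribution with an $\E{\Tr\Gamma(X,-L^{-1}X)}$-type quantity via the $\Gamma$--$L$--$\widetilde L$ relations recalled in Section~\ref{Section_prelim}. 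This bounds $d_3(X,Z)$ by the sum of (i) $\tfrac12\norm{S-S'}_{\operatorname{HS}}$, (ii) a term controlling the fluctuation of $\Gamma(X,-L^{-1}X)$ about its mean in $\operatorname{HS}$-norm, and (iii) a Taylor-remainder term carrying the $2^{3q}$-type constants. The only departure from the proof of Theorem~\ref{theorem_fourmomentHilbert} is in the estimation of (ii) and (iii): there these were bounded by the strong fourth moments of the $F_q$, whereas here I would bound them directly by contraction norms of the kernels $f_{q,i}$.

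For each pair $p,q\le N$ and all basis indices $i,j$, I would expand, via the product formula~\eqref{productformulaPoisson}, all the multiple-integral products entering (ii) and (iii) — in particular the $\widetilde\Gamma(F_{q,i},F_{p,j})$, which by the $\Gamma$--$\widetilde L$ relation equal $\tfrac12\sum_{r,l}(r+l)\,r!\binom qr\binom pr\binom rl\,I_{q+p-r-l}(f_{q,i}\star^l_r f_{p,j})$ — obtaining finite linear combinations of the $I_{q+p-r-l}(f_{q,i}\star^l_r f_{p,j})$. Squaring and taking expectations, the isometry and orthogonality of multiple integrals collapse everything into double sums over index tuples $(r,l),(s,m)$ with $r+l=s+m$, of the product-formula weights times $(q+p-r-l)!$ times the inner products $\inner{\widetilde{f_{q,i}\star^l_r f_{p,j}},\,\widetilde{f_{q,i}\star^m_s f_{p,j}}}$. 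The two extreme tuples are special: $(0,0,0,0)$ has weight $r+l=0$ and is annihilated by $\Gamma$, while $(q\wedge p,q\wedge p,q\wedge p,q\wedge p)$ produces, for $q=p$, a constant that is removed by centering and, for $q\neq p$, the lowest-order integral $I_{|q-p|}(f_q\star^{q\wedge p}_{q\wedge p}f_p)$, which is pulled out into a separate sum. This is why these tuples are excluded from the index set $I$, why the first sum of $\beta$ carries only $q\neq p$, and why the second runs over $1\le r\le q\wedge p-1$.

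The passage from scalar to $K^{\otimes2}$-valued contractions is then immediate: since $f_q=\sum_i f_{q,i}\otimes k_i$, one has $f_q\star^l_r f_p=\sum_{i,j}(f_{q,i}\star^l_r f_{p,j})\otimes k_i\otimes k_j$ and hence $\sum_{i,j}\norm{f_{q,i}\star^l_r f_{p,j}}^2_{\mathfrak H^{\otimes(q+p-r-l)}}=\norm{f_q\star^l_r f_p}^2_{\mathfrak H^{\otimes(q+p-r-l)}\otimes K^{\otimes2}}$. Bounding $\norm{\widetilde h}\le\norm h$, applying Cauchy--Schwarz to each inner product, and then a further Cauchy--Schwarz over $(i,j)$ to the sums $\sum_{i,j}\norm{f_{q,i}\star^l_r f_{p,j}}\norm{f_{q,i}\star^m_s f_{p,j}}$, every diagonal term becomes $\norm{f_q\star^l_r f_p}^2_{\cdots\otimes K^{\otimes2}}$ and every off-diagonal term becomes $\norm{f_q\star^l_r f_p}_{\cdots\otimes K^{\otimes2}}\norm{f_q\star^m_s f_p}_{\cdots\otimes K^{\otimes2}}$; one checks that the accumulated weights are exactly $a_{p,q}$ on the leading cross-chaos square, $b_{p,q}$ on the pure contractions $f_q\star^r_r f_p$, and $c_{p,q,l,m}$ on the off-diagonal products, so that the three sums defining $\beta$ appear. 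Treating (iii) analogously, then dominating the scalar prefactors $\tfrac{2q-1}{4q}$, $\tfrac{p+q-1}{4p}$, $2^{3q-1}(4q-3)$ by their values at $q=N$ and extracting $\sqrt\beta$ by Cauchy--Schwarz over the at most $N^2$ chaos pairs, yields a bound of the announced form; a careful accounting of constants produces exactly the coefficient $\tfrac{N(2N-1)}{4}+\sqrt{2^{3N-2}N(4N-3)\E{\norm{X}^2_K}}$.

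The main obstacle is the combinatorial bookkeeping in the middle two steps: propagating the product-formula weights through the squaring, the symmetrizations, the centering and the collapse of the basis-index sums without losing constants, and checking that after centering it is precisely the tuples of $I$, together with the boundary tuples carrying the $a_{p,q}$- and $b_{p,q}$-coefficients, that survive — so that $\beta$ appears with exactly the stated coefficients rather than larger ones. A minor technical point is that all contractions in question lie in the relevant $L^2$ spaces, which is ensured by the finiteness of the chaos expansion~\eqref{chaosdecompofX} together with the hypothesis $\E{\norm{X}^4_K}<\infty$.
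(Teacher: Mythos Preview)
Your route is genuinely different from the paper's, and the difference matters for whether the stated $\beta$ with exactly the coefficients $a_{p,q},b_{p,q},c_{p,q,l,m}$ actually appears.

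The paper does \emph{not} reopen the exchangeable-pair/Stein argument. It simply invokes the second inequality of Theorem~\ref{theorem_fourmomentHilbert} as a black box, obtaining $d_3(X,Z)$ bounded by $\tfrac12\norm{S-S'}_{\operatorname{HS}}$ plus the stated prefactor times $\sqrt{\E{\norm X^4_K}-\E{\norm X^2_K}^2-2\norm S^2_{\operatorname{HS}}}$, and then shows that this single scalar fourth-moment deficit is $\le\beta$. That last step is purely algebraic: write the deficit as $\sum_{i,j,p,q}\bigl(\E{F_{q,i}^2F_{p,j}^2}-\E{F_{q,i}^2}\E{F_{p,j}^2}-2\E{F_{q,i}F_{p,j}}^2\bigr)$, expand each $F_{q,i}F_{p,j}$ once via the product formula~\eqref{productformulaPoisson}, square, take expectation, and use Lemma~\ref{lemma_contraction_00} to unfold the $(r,l)=(0,0)$ piece. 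The coefficients $a_{p,q},b_{p,q},c_{p,q,l,m}$ are, by definition, the weights that this fourth-moment expansion produces; the $i,j$ sums then collapse into the $K^{\otimes2}$ norms and a final Cauchy--Schwarz on the off-diagonal inner products gives~$\beta$.

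Your proposal instead expands $\widetilde\Gamma(F_{q,i},F_{p,j})$ directly, picking up the extra $(r+l)/2$ weight you note. After squaring, the inner product $\langle f_{q,i}\widetilde\star^l_r f_{p,j},\,f_{q,i}\widetilde\star^m_s f_{p,j}\rangle$ therefore carries a factor $(r+l)(s+m)/4$ on top of the product-formula combinatorics, which is not $c_{p,q,l,m}(r,s)$. Your sentence ``one checks that the accumulated weights are exactly $a_{p,q}$, $b_{p,q}$, $c_{p,q,l,m}$'' is precisely the unverified step: those coefficients were engineered to match $\E{F_{q,i}^2F_{p,j}^2}$, not $\E{\widetilde\Gamma(F_{q,i},F_{p,j})^2}$, and there is no reason the two bookkeeping exercises should coincide term by term. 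Your route can certainly produce \emph{a} contraction bound of the same qualitative shape, but recovering the statement as written --- with these specific coefficients and the single prefactor already fixed by Theorem~\ref{theorem_fourmomentHilbert} --- essentially forces you back to the paper's fourth-moment expansion anyway.
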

	\begin{example}
		\label{remark_contraction_order2}
		If $X$ is a sum of elements of the first two chaoses, i.e.,
		$X=I_1(f_1)+I_2(f_2)$, Theorem
		\ref{theorem_contractionestimate} requires the
		contraction norms $\norm{f_{1}\star^1_1
			f_{2}}_{\mathfrak{H}\otimes K^{\otimes 2}}$,
		$\norm{f_{2}\star^1_1 f_2}_{\mathfrak{H}^{\otimes
				2}\otimes K^{\otimes 2}}$, $\norm{f_{1}\star^0_1
			f_{2}}_{\mathfrak{H}^{\otimes 2}\otimes K^{\otimes
				2}}$, $\norm{f_{1}\star^0_1
			f_{2}}_{\mathfrak{H}^{\otimes 2}\otimes K^{\otimes
				2}}$, $\norm{f_{2}\star^0_2
			f_{2}}_{\mathfrak{H}^{\otimes 2}\otimes K^{\otimes
				2}}$, $\norm{f_{2}\star^1_2
			f_{2}}_{\mathfrak{H}\otimes K^{\otimes 2}}$ and $\norm{f_{1}\star^0_1 f_{1}}_{\mathfrak{H}\otimes K^{\otimes 2}}$ to converge to $0$ to get
		convergence towards a Gaussian law.
	\end{example}
	\begin{example}
		Let $\mu$ be a $\sigma$-finite measure on some
		measure space. By setting $K=\R, \mathfrak{H}=L^2(\mu)$ and $X=I_p(f)$
		for some $p\geq 2$ in Theorem \ref{theorem_contractionestimate}, we
		get a result comparable to \cite[Theorem 5.1]{peccati-sole-taqqu-ea:2010:steins-method-normal} and \cite[Theorem
		2]{peccati-taqqu:2008:central-limit-theorems}. For instance, whenever $X=I_2(f)$, Theorem
		\ref{theorem_contractionestimate} and \cite[Example 5.2]{peccati-sole-taqqu-ea:2010:steins-method-normal} both
		state that normal convergence happens if $\norm{f\star^1_1
			f}_{L^2(\mu^2)}$, $\norm{f}_{L^4(\mu^2)}$ and $\norm{f\star^1_2
			f}_{L^2(\mu)}$ converge to $0$, keeping in mind that
		$\norm{f}^2_{L^4(\mu^2)}=\norm{f\star^0_2 f}_{L^2(\mu^2)}$, and $\norm{f\star^0_1 f}_{L^2(\mu^3)}=\norm{f\star^1_2 f}_{L^2(\mu)}$.

		Another example is \cite[Example 5.3]{peccati-sole-taqqu-ea:2010:steins-method-normal}, which states that $X=I_3(g)$ converges to a Gaussian distribution if
		$\norm{g}^2_{L^4(\mu^3)}$, $\norm{g\star^1_1
			g}_{L^2(\mu^4)}$, $\norm{g\star^1_2
			g}_{L^2(\mu^3)}$, $\norm{g\star^1_3 g}_{L^2(\mu^2)}$
		and $\norm{g\star^2_3 g}_{L^2(\mu)}$ all converge to
		$0$, which is the same condition suggested in Theorem \ref{theorem_contractionestimate}.

		Further, we would like to mention \cite{eichelsbacher-thale:2014:new-berry-esseen-bounds%
			,lachieze-rey-peccati:2013:fine-gaussian-fluctuations%
			,lachieze-rey-peccati:2013:fine-gaussian-fluctuations1%
		}
		which also offer contraction bounds for normal
		approximation on the Poisson space.
	\end{example}
	
	\section{Proof of main results}
	\label{sec:proof-main-results}
	We begin with the proof of Theorem \ref{theorem_fourmomentHilbert}
	which uses the method of exchangeable pairs developed in Section \ref{Section_prelim}.
	\subsection{Proof of Theorem \ref{theorem_fourmomentHilbert}}
	\label{Section_prooffourmomentHilbert}
	Let $G$ be a Gaussian random variable on $K$ with the same
	covariance operator as $X$, i.e., $G$ has covariance operator
	$S$. Similarly to \cite[Corrolary 3.3]{bourguin-campese:2020:approximation-hilbert-valued-gaussians}, it holds that
	\begin{align*}
		d_3\brac{G,Z}\leq \frac{1}{2}\norm{S-S'}_{\operatorname{HS}}.
	\end{align*}
	Therefore, it suffices to derive the desired moment bound for
	$d_3\brac{X,G}$ which yields the first item in Theorem
	\ref{theorem_fourmomentHilbert} as
	\begin{align*}
		d_3\brac{X,Z}\leq d_3\brac{X,G}+d_3\brac{G,Z}.
	\end{align*}
	In Subsection
	\ref{constructionofthepair}, we constructed an exchangeable pair
	of the form $(F_q,F_q^t)$ based on an element of a fixed $K$-valued
	chaos $F_q$, where $q$ denotes the order of the Poisson chaos. Recall that $X$ has the
	chaos decomposition \eqref{chaosdecompofX}. It follows that, for any
	$t \geq 0$,  if we
	define $X^t$ as
	\begin{equation*}
		X^t=\sum_{q=1}^{N}F_q^t, 
	\end{equation*}
	then the pair $(X,X^t)$ is also exchangeable. Since $\inner{x-y,Dg(x)+Dg(y)}_K$ is an anti-symmetric expression, the exchangeability implies
		\begin{align*}
			\lim_{t\to 0}\frac{1}{2t}\E{\inner{-L^{-1}(X^t-X),Dg(X^t)+Dg(X)}_K}=0.
	\end{align*}
	Furthermore, applying Taylor's theorem yields
	\begin{align*}
		0&=\lim_{t\to 0}\frac{1}{2t}\E{\inner{-L^{-1}(X^t-X),Dg(X^t)+Dg(X)}_K}\\
		&= \lim_{t\to 0}\E{\frac{1}{2t}\inner{-L^{-1}(X^t-X),Dg(X^t)-Dg(X)}_K+\frac{1}{t}\inner{-L^{-1}(X^t-X),Dg(X)}_K}\\
		&=\lim_{t\to 0}\E{\frac{1}{2t}\inner{-L^{-1}(X^t-X),D^2g(X)(X^t-X)+r}_K+\frac{1}{t}\inner{-L^{-1}(X^t-X),Dg(X)}_K}.
	\end{align*}
	Here, $r$ denotes the remainder term for which
		$\norm{r}_K\leq \frac{1}{2}\norm{D^3g(\xi) (X^t-X)^2}_K$,
		and $\xi$ is in the open ball centered at $X$ with radius $\norm{X_t-X}_K$.
	
	Now let $R(t)=\E{\frac{1}{2t}\inner{-L^{-1}(X^t-X),r}_K}$. Note that
	$\E{\Delta_G g(X)}=\sum_{1\leq q\leq
		N}\E{\Tr_K\brac{D^2g(X)S_q}}$. Combined with part (a) and
	(b) of Lemma \ref{lemmaexchangeablepair} and keeping in mind $F_q=\sum_{i\in \N}F_{q,i}k_i$, this leads to
	\begin{align*}
		0       &=\sum_{1\leq q\leq
			N}\E{\Tr_K\brac{D^2g(X)\Gamma
				\brac{F_q,-L^{-1}F_q}}}\\ & \quad +\sum_{1\leq p\neq q\leq
			N}\sum_{i,j\in\N}\E{\inner{k_i,D^2g(X)\widetilde{\Gamma}\brac{-\widetilde{L}^{-1}F_{p,i},F_{q,j}}k_j
			}_K} -\E{\inner{X,Dg(X)}_K}+\lim_{t\to 0}R(t)\\       
		&=\E{\Delta_G g(X)}-\E{\inner{X,Dg(X)}_K}
		+\sum_{1\leq q\leq
			N}\E{\Tr_K\brac{D^2g(X)\brac{\Gamma
					\brac{F_q,-L^{-1}F_q}-S_q}}}\\ & \quad +\sum_{1\leq p\neq q\leq N}\sum_{i,j\in\N}\E{\inner{k_i,D^2g(X)\widetilde{\Gamma}\brac{-\widetilde{L}^{-1}F_{p,i},F_{q,j}}k_j }_K}+\lim_{t\to 0}R(t).
	\end{align*}
	The above equation and the Stein equation introduced in Section \ref{Section_prelim} imply
	\begin{align}
		\label{estimate_stein}
		d_3(X,G)&=  \sup_{h\in C^3_b(K)}\abs{\Delta_G g(X)-\inner{X,Dg(X)}_K}\nonumber\\
		& \leq \sup_{h\in C^3_b(K)}\left\{    \sum_{1\leq q\leq
			N}\abs{\E{\Tr_K\brac{D^2g(X)\brac{\Gamma
						\brac{F_q,-L^{-1}F_q}-S_q}}}}\right.\nonumber\\ &
		\quad \left. +\sum_{1\leq p\neq q\leq N}\abs{\sum_{i,j\in\N}\E{\inner{k_i,D^2g(X)\widetilde{\Gamma}\brac{-\widetilde{L}^{-1}F_{p,i},F_{q,j}}k_j }_K}}+\abs{\lim_{t\to 0}R(t)} \right\}.
	\end{align} 
	For the first term on the right side of
	\eqref{estimate_stein}, it holds that
	\begin{align*}
		&\sum_{1\leq q\leq
			N}\abs{\E{\Tr_K\brac{D^2g(X)\brac{\Gamma
						\brac{F_q,-L^{-1}F_q}-S_q}}}} \\
		& \qquad\qquad\qquad\qquad\qquad \leq \sum_{1\leq q\leq N}\norm{D^2g(X)}_{L^2(\Omega;\operatorname{HS}(K))} \norm{\frac{1}{q}\Gamma(F_q,F_q)-S_q}_{L^2(\Omega;\operatorname{HS}(K))}\nonumber\\
		& \qquad\qquad\qquad\qquad\qquad \leq \sum_{1\leq q\leq N}\frac{1}{2q}\sqrt{\sum_{i,j\in \N} \V{\Gamma\brac{F_{q,i},F_{q,j}}}}\nonumber\\
		& \qquad\qquad\qquad\qquad\qquad\leq \sum_{1\leq q\leq N}\frac{2q-1}{4q}\sqrt{\sum_{i,j\in \N}\E{F_{q,i}^2F_{q,j}^2}-\E{F_{q,i}^2}\E{F_{q,j}^2}-2\E{F_{q,i}F_{q,j}}^2}\nonumber\\ 
		& \qquad\qquad\qquad\qquad\qquad = \sum_{1\leq q\leq N}\frac{2q-1}{4q}\sqrt{\E{\norm{F_q}^4_K}-\E{\norm{F_q}^2_K}^2-2\norm{S_q}^2_{\operatorname{HS}}}.
	\end{align*}
	In particular, we have used the
	fact that $\norm{D^2g(x)}_{K^{\otimes
			2}}=\norm{D^2g(x)}_{\operatorname{HS}(K)}$ and
	\cite[Lemma 2.4]{bourguin-campese:2020:approximation-hilbert-valued-gaussians} to get the third line above. The fourth
	line is a consequence of Lemma \ref{lemma_DVZlemma2.2}. Finally, the
	identity $\inner{Sf,g}_K=\E{\inner{X,f}_K\inner{X,g}_K}$
	allows us to get the term $\norm{S_q}_{\operatorname{HS}}$ in the last
	line.

	Now we study the second term on the right side of
	\eqref{estimate_stein}. Application of \cite[Lemma
	2.4]{bourguin-campese:2020:approximation-hilbert-valued-gaussians} and Lemma \ref{lemma_DVZlemma2.2} gives
	\begin{align*}
		&\sum_{1\leq p\neq q\leq N}\abs{\sum_{i,j\in\N}\E{\inner{k_i,D^2g(X)\widetilde{\Gamma}\brac{-\widetilde{L}^{-1}F_{p,i},F_{q,j}}k_j }_K}}\\
		&\qquad\qquad\qquad\leq \sum_{1\leq p\neq q\leq N}\E{\sqrt{\sum_{i,j\in\N}\inner{k_i,D^2g(X)k_j }_K } \sqrt{\sum_{i,j\in\N}\widetilde{\Gamma}\brac{-\widetilde{L}^{-1}F_{p,i},F_{q,j}}^2}}\\
		&\qquad\qquad\qquad\leq \sum_{1\leq p\neq q\leq N}\sqrt{\sum_{i,j\in\N}\E{\inner{k_i,D^2g(X)k_j }^2_K }} \sqrt{\sum_{i,j\in\N}\E{\widetilde{\Gamma}\brac{-\widetilde{L}^{-1}F_{p,i},F_{q,j}}^2}}  \\
		&\qquad\qquad\qquad \leq  \sum_{1\leq p\neq q\leq N}\frac{p+q-1}{2p}\norm{D^2g(X)}_{L^2(\Omega;\operatorname{HS}(K))}\sqrt{\sum_{i,j\in \N}\E{F_{p,i}^2F_{q,j}^2}-\E{F_{p,i}^2}\E{F_{q,j}^2}}\\
		&\qquad\qquad\qquad\leq \sum_{1\leq p\neq q\leq N}\frac{p+q-1}{4p}\sqrt{\E{\norm{F_p}^2_K\norm{F_q}^2_K}-\E{\norm{F_p}^2_K}\E{\norm{F_q}^2_K}}.
	\end{align*}
	As the last step, we apply Lemma \ref{lemma_remaindertermbound} to the remainder term in \eqref{estimate_stein}.
		\begin{align*}
			\abs{\lim_{t\to 0}R(t)}&\leq \lim_{t\to 0}\frac{1}{4t}\norm{D^3g}_{\operatorname{op}}\E{\norm{-L^{-1}\brac{X_t-X}}_K\norm{X^t-X}^2_K}\\
			&\leq \frac{N}{2}\sqrt{\max_{1\leq p\leq N}\E{\norm{F_p}^2_K}}\sum_{1\leq q\leq N} \sqrt{4q-3}\sqrt{{\norm{F_q}_K^4-\E{\norm{F_q}^2_K}^2-2\norm{S_q}^2_{\operatorname{HS}}}}.
	\end{align*}
	We can hence deduce from \eqref{estimate_stein} the inequality
	\begin{align}
		\label{estimate_first_G}
		d_3(X,G)&\leq\sum_{1\leq q\leq
			N}\frac{2q-1}{4q}\sqrt{\E{\norm{F_q}^4_K}-\E{\norm{F_q}^2_K}^2-2\norm{S_q}^2_{\operatorname{HS}}}\nonumber\\
		&+\sum_{1\leq p\neq q\leq
			N}\frac{p+q-1}{4p}\sqrt{\E{\norm{F_p}^2_K\norm{F_q}^2_K}-\E{\norm{F_p}^2_K}\E{\norm{F_q}^2_K}}\nonumber\\
		&+\frac{N}{2}\sqrt{\max_{1\leq p\leq N}\E{\norm{F_p}^2_K}}\sum_{1\leq q\leq N} \sqrt{4q-3}\sqrt{{\norm{F_q}_K^4-\E{\norm{F_q}^2_K}^2-2\norm{S_q}^2_{\operatorname{HS}}}}.
	\end{align}
	Now, to get the second estimate in Theorem \ref{theorem_fourmomentHilbert}, observe that
	\begin{align*}
		\E{\norm{X}^4_K}-(\E{\norm{X}^2_K})^2-2\norm{S}^2_{\operatorname{HS}}
		=&\sum_{1\leq q\leq N} \E{\norm{F_q}^4_K}-\E{\norm{F_q}^2_K}^2-2\norm{S_q}^2_{\operatorname{HS}}\\
		&+\sum_{1\leq p\neq q\leq N}\E{\norm{F_p}^2_K\norm{F_q}^2_K}-\E{\norm{F_p}^2_K}\E{\norm{F_q}^2_K},
	\end{align*}
	This combined with Lemma
	\ref{lemmapositive4thmoment}, the bound at
	\eqref{estimate_first_G} and 
\begin{equation*}
        \begin{cases}
       		\displaystyle \frac{2q-1}{4q}\vee \frac{p+q-1}{4p}\leq \frac{2N-1}{4}&\text{ for $1\leq p,q\leq N$},\\
		\displaystyle \sum_{1\leq q,p\leq N}\sqrt{y_{q,p}}\leq \sqrt{N^2 \sum_{1\leq p,q\leq N}y_{q,p}}&\text{ for $y_{q,p}\geq 0$},\\
		\displaystyle 4q-3\leq 4N-3&\text{ for $1\leq q\leq N$},   
        \end{cases}
\end{equation*}
	yields
	\begin{align*}
		d_3(X,G)&\leq \brac{\frac{N(2N-1)}{4}+\frac{N}{2}\sqrt{(4N-3)\max_{1\leq p\leq N}\E{\norm{F_p}^2_K}}}
		\\ &\hspace{10em}\sqrt{\E{\norm{X}^4_K}-\E{\norm{X}^2_K}^2-2\norm{S}^2_{\operatorname{HS}}}. 
	\end{align*}
	\qed

	We now turn to the proof of Theorem \ref{theorem_contractionestimate},
	which makes use of the second estimate in Theorem \ref{theorem_fourmomentHilbert}.
	\subsection{Proof of Theorem
		\ref{theorem_contractionestimate}}    
	The strategy here consists of making use of the product
	formula \eqref{productformulaPoisson} for Poisson multiple integrals in order to represent
	the quantity
	$\E{\norm{X}^4_K}-\E{\norm{X}^2_K}^2-2\norm{S}^2_{\operatorname{HS}}$
	which appears in the second estimate of Theorem
	\ref{theorem_fourmomentHilbert} in term of contraction
	norms. We begin by noting that this quantity can be written as
	\begin{align*}
		\E{\norm{X}^4_K}-\E{\norm{X}^2_K}^2-2\norm{S}^2_{\operatorname{HS}}=&\sum_{\substack{i,j\in
				\N\\1\leq
				p,q\leq N}}\left(
		\E{F_{q,i}^2F_{p,j}^2}-\E{F_{q,i}^2}\E{F_{p,j}^2}-2\E{F_{q,i}F_{p,j}}^2
		\right)\\
		=& \sum_{\substack{i,j\in \N\\1\leq  p,q\leq N}}\left(
		\E{F_{q,i}^2F_{p,j}^2}-\E{F_{q,i}^2}\E{F_{p,j}^2}\right)\\
		&-2\sum_{\substack{i,j\in \N\\1\leq  q\leq N}}\E{F_{q,i}F_{p,j}}^2 .
	\end{align*}
	An application of the product formula
	\eqref{productformulaPoisson} for Poisson multiple integrals yields
	\begin{equation*}
		F_{q,i}F_{p,j}=\sum_{r=0}^{q\wedge p} r!{q \choose r}{p\choose r}\sum_{l=0}^r{r\choose l}I_{q+p-r-l}\brac{{f_{q,i}
				{\widetilde{\star}}^l_rf_{p,j}}}.
	\end{equation*}
	Now by the orthogonality of
	Poisson chaos of different orders, one has
	\begin{align}
		\label{fourthmoment}
		\E{F_{q,i}^2F_{p,j}^2}&=\sum_{r,s=0}^{q\wedge p} \sum_{\substack{0\leq l\leq r\\0\leq m\leq s\\r+l=s+m}}c_{p,q,l,m}(r,s) \inner{{f_{q,i}
				\widetilde{\star}^l_rf_{p,j}},{f_{q,i}
				\widetilde{\star}^m_sf_{p,j}}}_{\mathfrak{H}^{\otimes (q+p-r-l)}},
	\end{align}
	where the coefficient $c_{p,q,l,m}(r,s)$ is given by
	\begin{equation*}
		c_{p,q,l,m}(r,s) = r!s! \binom{q}{r}\binom{q}{s}\binom{p}{r}\binom{p}{s}\binom{r}{l}\binom{s}{m}(p+q-r-l)!.
	\end{equation*}
	Let us define the index set $I$ as
	\begin{align*}I=\big\{(r,s,l,m)\in \N^4\colon & 0\leq
		r,s\leq q\wedge p,\ 0\leq l\leq r,\ 0\leq m\leq s,\\
		&
		r+l=s+m,\ (r,s,l,m)\notin \{(0,0,0,0),(q\wedge p,q\wedge p,q\wedge p,q\wedge p) \}\big\}.\end{align*}
	% and coefficients
	% \begin{align*}
		%       a_{p,q}(c)=p!q!{q \choose c}{p\choose c},b_{p,q}(r)=r!{q \choose r}{p\choose r}.
		% \end{align*} 
	Then, using Lemma \ref{lemma_contraction_00},  Equation \eqref{fourthmoment} can be rewritten as
	\begin{align*}
		\E{F_{q,i}^2F_{p,j}^2}=&q!p!\norm{f_{q,i}}^2_{\mathfrak{H}^{\otimes
				q}}\norm{f_{p,j}}^2_{\mathfrak{H}^{\otimes
				q}}+2q!^2\inner{f_{q,i},f_{q,j}}^2_{\mathfrak{H}^{\otimes
				q}}\\
		&+a_{p,q}\brac{p\wedge q}\norm{f_{q,i} \star^{q\wedge
				p}_{q\wedge p} f_{p,j}}^2_{\mathfrak{H}^{\otimes
				\abs{q-p}}}\mathds{1}_{\left\{ q\neq p \right\}}+\sum_{r=1}^{q\wedge p -1}b_{p,q}\brac{r} \norm{f_{q,i} \star^r_r f_{p,j}}^2_{\mathfrak{H}^{\otimes (q+p-2r)}}\\
		&+\sum_{(r,s,l,m)\in I}c_{p,q,l,m}(r,s)\inner{{f_{q,i}
				\widetilde{\star}^l_rf_{p,j}},{f_{q,i}
				\widetilde{\star}^m_sf_{p,j}}}_{\mathfrak{H}^{\otimes (q+p-r-l)}},
	\end{align*}
	where the combinatorial coefficients $a_{p,q}(r)$ and $b_{p,q}(r)$ are given by
	\begin{equation*}
		\begin{cases}
			\displaystyle a_{p,q}(r) = p!q! \binom{q}{r}\binom{p}{r} + r!^2 \binom{q}{r}^2
			\binom{p}{r}^2 \abs{p-q}! \\
			\displaystyle b_{p,q}(r) = p!q! \binom{q}{r}\binom{p}{r}
		\end{cases}.
	\end{equation*}
	Consequently, we hence obtain
	\begin{align*}
		\E{\norm{X}^4_K}-\E{\norm{X}^2_K}^2-2\norm{S}^2_{\operatorname{HS}}=&\sum_{\substack{i,j\in \N\\1\leq  p,q\leq N}}\left(  \E{F_{q,i}^2F_{p,j}^2}-\E{F_{q,i}^2}\E{F_{p,j}^2}-2\E{F_{q,i}F_{p,j}}^2\right)\\
		=&\sum_{\substack{i,j\in \N\\1\leq  p\neq q\leq N}}a_{p,q}\brac{p\wedge q}\norm{f_{q,i} \star^{q\wedge
				p}_{q\wedge p} f_{p,j}}^2_{\mathfrak{H}^{\otimes
				\abs{q-p}}}\\
		&+\sum_{\substack{i,j\in \N\\1\leq  p,q\leq N}}\sum_{r=1}^{q\wedge p -1}b_{p,q}\brac{r} \norm{f_{q,i} \star^r_r f_{p,j}}^2_{\mathfrak{H}^{\otimes (q+p-2r)}}\\
		&+\sum_{\substack{i,j\in \N\\1\leq  p,q\leq N\\(r,s,l,m)\in I}}c_{p,q,l,m}(r,s)\inner{{f_{q,i}
				\widetilde{\star}^l_rf_{p,j}},{f_{q,i}
				\widetilde{\star}^m_sf_{p,j}}}_{\mathfrak{H}^{\otimes (q+p-r-l)}}.
	\end{align*}
	Since we have
	\begin{align*}  
		\norm{{f_{q}{\star}^l_rf_{p}}}^2_{\mathfrak{H}^{\otimes (q+p-r-l)}\otimes K^{\otimes 2}}=\sum_{i,j\in\N}\norm{{\inner{f_{q},k_i}_K{\star}^l_r \inner{f_{p},k_j}_K}}^2_{\mathfrak{H}^{\otimes (q+p-r-l)}}=\sum_{i,j\in\N}\norm{{f_{q,i}{\star}^l_rf_{p,j}}}^2_{\mathfrak{H}^{\otimes (q+p-r-l)}},
	\end{align*}
	we can sum over $i,j \in \N$ and apply Holder's inequality to get
	\begin{align*}
		&\E{\norm{X}^4_K}-\E{\norm{X}^2_K}^2-2\norm{S}^2_{\operatorname{HS}} \leq \sum_{\substack{i,j\in \N\\1\leq  p\neq q\leq N}}a_{p,q}\brac{p\wedge q}\norm{f_{q,i} \star^{q\wedge
				p}_{q\wedge p} f_{p,j}}^2_{\mathfrak{H}^{\otimes
				\abs{q-p}}}\\
		&\qquad\qquad\qquad\qquad\qquad\qquad\qquad\quad\quad+\sum_{\substack{i,j\in \N\\1\leq  p,q\leq N}}\sum_{r=1}^{q\wedge p -1}b_{p,q}\brac{r} \norm{f_{q,i} \star^r_r f_{p,j}}^2_{\mathfrak{H}^{\otimes (q+p-2r)}}\\
		&\qquad\qquad\qquad\qquad\qquad\qquad+\sum_{\substack{i,j\in \N\\1\leq  p,q\leq N\\(r,s,l,m)\in I}}c_{p,q,l,m}(r,s)\norm{f_{q} \star^l_r f_{p}}_{\mathfrak{H}^{\otimes (q+p-r-l)}}\norm{f_{q} \star^m_s f_{p}}_{\mathfrak{H}^{\otimes (q+p-r-l)}},
	\end{align*}    
	which concludes the proof.      \qed
	
	\section{Applications}
	\label{sec:applications}
	\subsection{Brownian approximation of a Poisson process in Besov-Liouville spaces}
	\label{subsection_Besov}
	
	% TODO: Compare with Coutin-Decreusefond. In particular, point out that we obtain the same rate (which has also classically been obtained in older papers, I think for example by Barbour) but no longer are in $l^{2}$ but in the true Besov-Liouville space.
	
	\subsubsection{A brief overview of Besov-Liouville spaces}
	For an extensive account on the current topic, we invite readers to view \cite{samko-kilbas-marichev:1993:fractional-integrals-derivatives}. For $f\in L^p([0,1],ds)$ and $\beta>0$, we define the left and right fractional integrals respectively as
	\begin{align*}
		\brac{I^\beta_{0^{+}}f}(s)=\frac{1}{\Gamma(\beta)}\int_0^s (s-r)^{\beta-1}f(r)dr
	\end{align*}
	and
	\begin{align*}
		\brac{I^\beta_{1^{-}}f}(s)=\frac{1}{\Gamma(\beta)}\int_s^1 (r-s)^{\beta-1}f(r)dr.
	\end{align*}
	This allows us to define the Besov-Liouville spaces
	\begin{align*}
		\mathcal{I}^+_{\beta,p}=\left\{ I^\beta_{0^{+}}\widehat{f},\ \widehat{f}\in L^p([0,1])\right\},
	\end{align*}
	which are Banach spaces when equipped with the norm $\norm{f}_{\mathcal{I}^+_{\beta,p}}=\norm{\widehat{f}}_{L^p([0,1])}$.
	The Besov-Liouville spaces $\mathcal{I}^-_{\beta,p}$ are defined accordingly with the right fractional integrals. When $\beta p<1$,  
	the spaces $\mathcal{I}^+_{\beta,p}$ and
	$\mathcal{I}^-_{\beta,p}$ are canonically isomorphic and
	therefore will both be denoted by
	$\mathcal{I}_{\beta,p}$.
	
	\begin{remark}
		As pointed out in
		\cite{coutin-decreusefond:2013:steins-method-brownian}, $\mathcal{I}_{\beta,2}$ for $\beta<1/2$ is an
		appropriate class of Besov-Liouville spaces for the functional
		approximation of a Poisson process by a Brownian motion since
		they are Hilbert spaces containing both the sample paths of the
		Poisson process and the Brownian motion. 
	\end{remark}

	Similarly to the left and right fractional integrals, one can define left and right fractional derivatives as
	\begin{align*}
		\brac{D^\beta_{0^{+}}f}(s)=\frac{1}{\Gamma(1-\beta)}\frac{d}{ds}\int_0^s (s-r)^{-\beta}f(r)dr\\
		\brac{D^\beta_{1^{-}}f}(s)=\frac{1}{\Gamma(1-\beta)}\frac{d}{ds}\int_s^1 (r-s)^{-\beta}f(r)dr
	\end{align*}
	As the name suggests, $D^\beta_{0^{+}}$ is  the inverse of $I^\beta_{0^{+}}$ (see \cite[Theorem 2.4]{samko-kilbas-marichev:1993:fractional-integrals-derivatives}). Two examples for the action of this operator that will be useful later are
	\begin{align}
		\label{example_fracder}
		\brac{D^\beta_{0^{+}}\operatorname{Id}}
		(r)=\frac{r^{-\beta+1}}{(-\beta+1)\Gamma(-\beta+1)}\quad
		\mbox{and}\quad \brac{D^\beta_{0^{+}}1_{[a,\infty)}} (r)=\frac{\brac{r-a}^{-\beta}_{+}}{\Gamma(-\beta+1)},
	\end{align}
	where $\operatorname{Id}$ denotes the identity function. Let
	us also mention a few important facts about fractional integrals
	and derivatives. Given $0<\beta<1$ and $1<p<1/\beta$,
	$I^\beta_{0^{+}}$ is a bounded operator from $L^p([0,1])$ to
	$L^q([0,1])$ with $q=p(1-\beta p)^{-1}$. Moreover, for $\beta>0$
	and $p\geq 1$, $I^\beta_{0^{+}}$ is bounded from  $L^p([0,1])$
	into itself (see for instance \cite[Equation (2.72)]{samko-kilbas-marichev:1993:fractional-integrals-derivatives}). Next, fractional
	derivatives are the inverses of fractional integrals, in the
	sense that 
	\begin{align*}\brac{D^\beta_{0^{+}} I^\beta_{0^{+}}f} (s)=f(s)
	\end{align*}
	for $f\in L^1([0,1])$. Furthermore, fractional integrals enjoy the
	semigroup property (see \cite[Theorem 2.5]{samko-kilbas-marichev:1993:fractional-integrals-derivatives}), that is
	\begin{align*}
		\brac{I^\alpha_{0^{+}}I^\beta_{0^{+}}f}(s)=\brac{I^{\alpha+\beta}_{0^{+}}f}(s)
	\end{align*}
	as long as $\beta>0$, $\alpha+\beta>0$ and $f\in L^1([0,1])$.   
	\subsubsection{A functional central limit theorem}      
	\label{subsubsection_theorem_Besov}
	We consider a Poisson process $N_\lambda (t)$ with intensity
	$\lambda$. It is well known (see for instance \cite[Example
	9.1.3]{nualart-nualart:2018:introduction-malliavin-calculus}) that it can be represented as
	\begin{align}
		\label{def_poiprocess_Besov}
		N_\lambda (t)&=\sum_{n\in \N} 1_{[T_n,\infty)}(t),
	\end{align}
	where $T_n=\sum_{i=1}^n \alpha_i$ and $\left\{  \alpha_i
	\colon i \in \N\right\}$ are independent exponentially distributed random
	variables with parameter $\lambda$, i.e.,   $\alpha_i \sim
	\operatorname{Exp}(\lambda)$ for all $i \in \N$. This implies
	that $T_n$ is Gamma distributed with shape $n$ and rate
	$\lambda$, i.e., $T_n\sim \operatorname{Gamma}(n,\lambda)$. As
	pointed out in \cite{coutin-decreusefond:2013:steins-method-brownian}, $N_\lambda (t)$ maps into
	$\mathcal{I}_{\beta,2}$ for $\beta<1/2$. 
	
	For any $t \in
	[0,1]$, define 
	\begin{align*}
		X_\lambda (t)&=\frac{N_\lambda (t)-\lambda t}{\sqrt{\lambda}}
	\end{align*}
	and let $Z$ be a Brownian motion on $\mathcal{I}_{\beta,2}$,
	that is a $\mathcal{I}_{\beta,2}$-valued Gaussian random variable with covariance operator 
	\begin{align}
		\label{covariance_BM_Besov} 
		S'=I^\beta_{0^{+}} I^{1-\beta}_{0^{+}} I^{1-\beta}_{1^{-}} D^\beta_{0^{+}},
	\end{align}
	where the expression of the covariance operator was derived in
	\cite{coutin-decreusefond:2013:steins-method-brownian}. We are now ready to state the main result of this
	application, namely the Brownian approximation of a Poisson
	process in $\mathcal{I}_{\beta,2}$.
	\begin{theorem}
		\label{theorem_Besov}
		On a Besov-Liouville space $\mathcal{I}_{\beta,2}$
		with $\beta<1/2$, the distributions of $X_\lambda$ and
		$Z$ are asymptotically close as
		$\lambda\to\infty$. Their closeness can be quantified
		by
		\begin{align*}
			d_3(X_\lambda,Z)\lesssim \frac{1}{\sqrt{\lambda}}.
		\end{align*}
	\end{theorem}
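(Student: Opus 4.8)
The plan is to realize $X_\lambda$ as a first Poisson chaos and then apply the contraction bound of Theorem~\ref{theorem_contractionestimate} with $N=1$. By~\eqref{def_poiprocess_Besov}, only the jump times of $N_\lambda$ in $[0,1]$ matter, and they form a proper Poisson point process with control measure $\mu(ds)=\lambda\,ds$ on $[0,1]$; with $\mathfrak H=L^2([0,1],\mu)$ one has $N_\lambda(t)-\lambda t=\widehat\eta([0,t])=I_1(\mathds{1}_{[0,t]})$ for $t\in[0,1]$, hence
\begin{align*}
	X_\lambda=I_1(f_1),\qquad f_1(s)=\frac{1}{\sqrt\lambda}\,\mathds{1}_{[s,1]}\in\mathcal{I}_{\beta,2},
\end{align*}
since $I_1(f_1)(t)=\tfrac{1}{\sqrt\lambda}I_1(\mathds{1}_{[0,t]})=X_\lambda(t)$. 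To see that $f_1$ is a bona fide $(\mathfrak H\otimes\mathcal{I}_{\beta,2})$-valued kernel, I would use the second identity of~\eqref{example_fracder}: because $D^\beta_{0^{+}}\mathds{1}_{[s,1]}(r)=(r-s)_+^{-\beta}/\Gamma(1-\beta)$ on $[0,1]$,
\begin{align*}
	\norm{\mathds{1}_{[s,1]}}_{\mathcal{I}_{\beta,2}}^2=\frac{1}{\Gamma(1-\beta)^2}\int_s^1(r-s)^{-2\beta}\,dr=\frac{(1-s)^{1-2\beta}}{\Gamma(1-\beta)^2(1-2\beta)},
\end{align*}
which is finite precisely because $\beta<1/2$ and is bounded uniformly in $s\in[0,1]$. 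Thus $f_1\in\mathfrak H\otimes\mathcal{I}_{\beta,2}$ has bounded $\mathcal{I}_{\beta,2}$-norm, so $\E{\norm{X_\lambda}_{\mathcal{I}_{\beta,2}}^4}<\infty$, and Theorem~\ref{theorem_contractionestimate} applies with $K=\mathcal{I}_{\beta,2}$ and $N=1$.

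For $N=1$ the bound of Theorem~\ref{theorem_contractionestimate} collapses: the $a_{p,q}$-sum (which needs $p\neq q$) and the $b_{p,q}$-sum (which needs $q\wedge p\geq 2$) are empty, and for $q=p=1$ the index set $I$ reduces to the single quadruple $(r,s,l,m)=(1,1,0,0)$, with $c_{1,1,0,0}(1,1)=1$, so that $\beta=\norm{f_1\star^0_1 f_1}_{\mathfrak H\otimes K^{\otimes 2}}^2$ — in agreement with Example~\ref{remark_contraction_order2}. Using the identity relating the $K$-valued contraction norm to the scalar ones together with $f_{1,i}\star^0_1 f_{1,j}(s)=f_{1,i}(s)f_{1,j}(s)$, I would compute
\begin{align*}
	\beta=\int_0^1\norm{f_1(s)}_{\mathcal{I}_{\beta,2}}^4\,\mu(ds)=\frac{1}{\lambda}\int_0^1\left(\frac{(1-s)^{1-2\beta}}{\Gamma(1-\beta)^2(1-2\beta)}\right)^{2}ds=\frac{c_\beta}{\lambda},
\end{align*}
the integral being finite since $\beta<3/4$. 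Similarly $\E{\norm{X_\lambda}_{\mathcal{I}_{\beta,2}}^2}=\Tr S=\int_0^1\norm{f_1(s)}_{\mathcal{I}_{\beta,2}}^2\,\mu(ds)=\int_0^1\norm{\mathds{1}_{[s,1]}}_{\mathcal{I}_{\beta,2}}^2\,ds$ is a finite constant independent of $\lambda$, so the prefactor $\tfrac{N(2N-1)}{4}+\sqrt{2^{3N-2}N(4N-3)\E{\norm{X_\lambda}_{\mathcal{I}_{\beta,2}}^2}}$ is $O(1)$. Theorem~\ref{theorem_contractionestimate} then gives $d_3(X_\lambda,Z)\lesssim\lambda^{-1/2}+\tfrac12\norm{S-S'}_{\operatorname{HS}}$. (Equivalently one could run Corollary~\ref{corollary_fourmomentHilbert} and observe, via the product formula, that $\E{\norm{X_\lambda}_{\mathcal{I}_{\beta,2}}^4}-\E{\norm{X_\lambda}_{\mathcal{I}_{\beta,2}}^2}^2-2\norm{S}_{\operatorname{HS}}^2$ equals exactly this $c_\beta/\lambda$.)

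It remains to show the residual term $\tfrac12\norm{S-S'}_{\operatorname{HS}}$ vanishes, and this is where the specific structure enters. Computing the covariance operator of $X_\lambda=I_1(f_1)$ on $\mathcal{I}_{\beta,2}$ directly from its kernel gives, for $u,v\in\mathcal{I}_{\beta,2}$,
\begin{align*}
	\inner{Su,v}_{\mathcal{I}_{\beta,2}}=\int_0^1\inner{f_1(s),u}_{\mathcal{I}_{\beta,2}}\inner{f_1(s),v}_{\mathcal{I}_{\beta,2}}\,\mu(ds)=\int_0^1\inner{\mathds{1}_{[s,1]},u}_{\mathcal{I}_{\beta,2}}\inner{\mathds{1}_{[s,1]},v}_{\mathcal{I}_{\beta,2}}\,ds,
\end{align*}
which is independent of $\lambda$ and is precisely the covariance operator of an $\mathcal{I}_{\beta,2}$-valued Brownian motion — equivalently the operator $S'$ of~\eqref{covariance_BM_Besov} — reflecting the classical identity $\E{X_\lambda(s)X_\lambda(t)}=s\wedge t=\E{Z(s)Z(t)}$. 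Hence $S=S'$, so $\norm{S-S'}_{\operatorname{HS}}=0$, and combining this with the previous paragraph yields $d_3(X_\lambda,Z)\lesssim\lambda^{-1/2}$. The two steps I expect to demand genuine care are: (i) verifying that the covariance operator displayed above indeed coincides with the Coutin--Decreusefond operator $S'$ of~\eqref{covariance_BM_Besov}, i.e., that the normalized Poisson process carries exactly the Brownian covariance within the Besov--Liouville scale; and (ii) the (elementary but delicate) fractional-calculus estimate showing $\mathds{1}_{[s,1]}\in\mathcal{I}_{\beta,2}$ with $\mathcal{I}_{\beta,2}$-norm bounded uniformly in $s$, which is exactly where the restriction $\beta<1/2$ is used. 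Everything else is a direct substitution into Theorem~\ref{theorem_contractionestimate}.
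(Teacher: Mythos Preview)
Your proposal is correct and follows essentially the same route as the paper: represent $X_\lambda$ as a first Poisson chaos with kernel $\tfrac{1}{\sqrt\lambda}\mathds{1}_{[s,1]}$, apply Theorem~\ref{theorem_contractionestimate} with $N=1$ so that only the contraction $f_1\star^0_1 f_1$ survives, compute its norm as $\int_0^1\|\mathds{1}_{[s,1]}\|_{\mathcal I_{\beta,2}}^4\,ds\cdot\lambda^{-1}$, and verify $S=S'$. The paper does exactly this, with two cosmetic differences: it writes the contraction norm out via the explicit fractional-derivative kernel $(r-x)_+^{-\beta}$ rather than packaging it as $\|\mathds{1}_{[s,1]}\|_K^4$, and for the covariance step it invokes Lemmas~\ref{lemma_covariance} and~\ref{lemma_cov_kernel} to compute $\E{(D^\beta_{0^+}X_\lambda)(r)(D^\beta_{0^+}X_\lambda)(s)}$ and $\E{(D^\beta_{0^+}Z)(r)(D^\beta_{0^+}Z)(s)}$ explicitly and observe they coincide, whereas you argue more conceptually that $\langle Su,v\rangle_{\mathcal I_{\beta,2}}=\int_0^1\langle\mathds{1}_{[s,1]},u\rangle\langle\mathds{1}_{[s,1]},v\rangle\,ds$ is $\lambda$-free and equals the Brownian covariance. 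Your flagged point~(i) is thus precisely what the paper's two appendix lemmas establish; your flagged point~(ii) is the same fractional-calculus estimate the paper uses implicitly.
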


	\begin{proof}
		$X_\lambda (t)$ can be represented as
		a Poisson multiple integral of order one. Let
		$\mathfrak{H}=L^2(\R^+,\lambda dx)$ be the underlying
		Hilbert space to the compensated Poisson process
		$N_\lambda(t)-\lambda t$. Furthermore, let
		$f(t)=\frac{1}{\sqrt{\lambda}}1_{[0,t]}\in
		\mathfrak{H}$. We can hence write
		\begin{align*}
			X_\lambda (t)=I_1(f(t)).
		\end{align*}    
		Theorem \ref{theorem_contractionestimate} then
		provides us with the estimate
		\begin{align}
			\label{bound_stein_Besov}
			d_3(X_\lambda,Z)\lesssim \norm{f \star^0_1 f}^2_{\mathfrak{H}\otimes K^{\otimes 2}}+\norm{S_\lambda-S'}_{\operatorname{HS}(K)},
		\end{align}
		where $S_\lambda$ denotes the covariance operator of
		$X_{\lambda}$ and where $K=\mathcal{I}_{\beta,2}$. We begin by computing the contraction
		norm appearing above. We have
		\begin{align*}
			(f \star^0_1 f)(x)=\frac{1}{\lambda}1_{[0,t]}(x)1_{[0,s]}(x)=1_{[x,\infty)}(t)1_{[x,\infty)}(s),\end{align*}
		so that
		\begin{align*}
			\norm{f \star^0_1 f}^2_{\mathfrak{H}\otimes K^{\otimes 2}}&=\frac{1}{\lambda^2}\int_0^1\int_0^1 \int_0^1 \brac{\brac{D^\beta_{0^{+}}1_{[x,\infty)}}(t) \brac{D^\beta_{0^{+}}1_{[x,\infty)}}(s)}^2 \lambda dxdsdt\\
			&=\frac{1}{\lambda \Gamma(-\beta+1)^4}\int_0^1\int_0^1 \brac{t-x}^{-2\beta}_{+}\brac{s-x}^{-2\beta}_{+}dsdt\lesssim \frac{1}{\lambda},
		\end{align*}
		where the last inequality simply comes from the fact that  $\int_0^1\int_0^1 \brac{t-x}^{-2\beta}_{+}\brac{s-x}^{-2\beta}_{+}dsdt$ is finite.
		
		Regarding the remaining term, namely
		$\norm{S_\lambda-S'}_{\operatorname{HS}(K)}$, we apply Lemma \ref{lemma_covariance}  
		and Lemma \ref{lemma_cov_kernel}. This yields 
		\begin{align*}
				\norm{S_\lambda-S'}_{\operatorname{HS}(K)}^2&=\norm{\E{\brac{D^\beta_{0^{+}}X_\lambda}(r)
                                                                              \brac{D^\beta_{0^{+}}X_\lambda}(s)}-\E{\brac{D^\beta_{0^{+}}Z}(r)
                                                                              \brac{D^\beta_{0^{+}}Z}(s)}}_{L^2([0,1]^{\otimes
                                                                              2})}^2\\
                  &=0,
		\end{align*}   
		which concludes the proof.              
	\end{proof}

	\subsection{Edge counting in random graphs}
	In \cite{lachieze-rey-peccati:2013:fine-gaussian-fluctuations}, the authors studied Gaussian fluctuations of
	real-valued $U$-statistics related to graphs generated by
	Poisson point processes. We will apply Theorem
	\ref{theorem_contractionestimate} to obtain a functional
	version of their results in all three regimes
	mentioned in \cite[Example 4.13]{lachieze-rey-peccati:2013:fine-gaussian-fluctuations}. Recall from Subsection
	\ref{constructionofthepair} the definition of a proper Poisson point process
	\begin{align*}
		\eta_\lambda=\sum_{i=1}^{\operatorname{Po}(\lambda)}\delta_{Y_i},
	\end{align*}
	where $\operatorname{Po}(\lambda)$ is a Poisson distribution
	on $\R$, while $\{Y_i \}_{i\in\N}$ is an i.i.d. sequence of $\R^d$-valued
	random variables distributed as $\ell$ and independent from
	$\operatorname{Po}(\lambda)$. For simplicity and illustration
	purposes, let us assume $\ell$ is the Lebesgue measure on
	$\R^d$. The control measure of  $\eta_\lambda$ is therefore
	\begin{align*}\mu_\lambda(\cdot)=\lambda \ell(\cdot).\end{align*}
	
	Let $G$ be a graph
	generated by $\eta_\lambda$, so that $G$ has the vertex set
	$\{Y_1,\ldots,Y_{\operatorname{Po}(\lambda)}\}$. In addition,
	let $W\subseteq \R^{d}$ be symmetric and bounded, i.e. $\ell(W)<\infty$. $W$ will serve as
	our original window in which we monitor the edges of $G$, and
	let $H_{\lambda}\subseteq \R^{2d}$ be a symmetric set which
	will serve as our original edge set. For $0\leq t\leq 1$,
	define        
	\begin{equation*}
		\begin{cases}
			\displaystyle W_t=t^{\frac{1}{2d}}W\\
			\displaystyle H_{\lambda,t}=t^{\frac{1}{2d}}H_{\lambda}\\
			\displaystyle \widehat{W}_t=\{x-y:x,y\in W_t \}\\
			\displaystyle \overline{H}_{\lambda,t}=\{x-y:x,y\in H_{\lambda,t}\}
		\end{cases}.
	\end{equation*}
	We will assume that any edge, written in pairs $(x,y)$, belongs to
	$H_{\lambda,t}$ if and only if $x-y\in \overline{H}_{\lambda,t}$. For example, this property holds for a disk graph with base edge set $\overline{H}_{\lambda}=B\brac{0,r_\lambda}$, an open ball of radius $r_\lambda$ at the origin. We note that compared to the setup in \cite{lachieze-rey-peccati:2013:fine-gaussian-fluctuations}, our window and edge set are not static but evolve with time.
	
	We are interested in a Poissonized $U$-statistics of the form
	\begin{align*}
		F_\lambda (t)=\sum_{\substack{(x,y)\in \eta^2_\lambda\\x\neq y}}1_{H_{\lambda,t} \cap W_{t}^2} (x,y)=\sum_{1=i_1< i_2}^{\operatorname{Po}(\lambda)}1_{H_{\lambda,t} \cap W_{t}^2} (Y_{i_1},Y_{i_2})
	\end{align*}
	which counts edges of $G$ that belong to the set
	$\overline{H}_{\lambda,t}$ and lie inside the window $W_t$ at
	time $t$. It is clear from the hypothesis that $\{{F}_\lambda
	(t)\}_{t\in [0,1]}$ as a process belongs to
	$K=L^2\brac{[0,1]}$. As proved in \cite{reitzner-schulte:2013:central-limit-theorems}, our $U$-statistic
	has a finite chaos expansion given by
	\begin{equation*}
		F_\lambda (t)=\E{F_\lambda (t)} + I_1\brac{f_{1}(t)}+I_2\brac{f_{2}(t)},
	\end{equation*}
	where the (functional) kernels $f_1(t)$ and $f_2(t)$ are given by
	\begin{equation*}
		\begin{cases}
			\displaystyle f_{1}(t)=2\int_{\R^d} 1_{H_{\lambda,t} \cap W_{t}^{ 2}} (x,y) \lambda dy\\
			\displaystyle f_{2}(t)=1_{H_{\lambda,t} \cap W_{t}^{ 2}} (x,y)
		\end{cases}.
	\end{equation*}
	Let $\bar{F}_\lambda (t)$ denote the centered and
	normalized version of $F_\lambda (t)$ given by
	\begin{align*}
		\bar{F}_\lambda (t)= \frac{F_\lambda
			(t)-\E{F_\lambda (t)}}{\sigma}=I_1\left(g_1(t)\right)+I_2\left(g_2(t)\right),
	\end{align*}
	where $\sigma^2=\V{F_\lambda (1)}$,
	$g_1(t)=\frac{f_{1}(t)}{\sigma}$ and
	$g_2(t)=\frac{f_{2}(t)}{\sigma}$. For
	convenience, we will also write $\ell_t$ for $\ell\brac{W_t}$
	and $\psi_{\lambda,t}$ for $\ell\brac{\overline{H}_{\lambda,t}
		\cap \widehat{W}_t}$. Using the scaling properties of the Lebesgue
	measure, we can write
	\begin{align*}
		\ell_t=\sqrt{t}\ell_1\quad \mbox{and} \quad \psi_{\lambda,t}= \sqrt{t}\psi_{\lambda,1}.
	\end{align*}
	We can actually compute $\sigma^2$ explicitly, using the
	orthogonality of Wiener chaos of different orders and the isometry
	property of Poisson multiple integrals. This yields
	\begin{align*}
		\sigma^2=&\norm{f_{1}(1)}_{L^2\brac{\mu_\lambda}}^2+\norm{f_{2}(1)}_{L^2\brac{\mu_\lambda^2}}^2\\
		=& 4\lambda^3\int_{\R^d}\brac{\int_{\R^d}
			1_{W_1}(x)1_{\overline{H}_{\lambda,1} \cap
				\widehat{W}_1}(y-x)   d(y-x)}^2  dx +
		\int_{\R^{2d}}1_{H_{\lambda,1} \cap W_1^{ 2}}
		(x,y) \lambda^2 dx dy\\
		=& 4\ell_1\lambda^3
		\psi_{\lambda,1}^2 + \ell_1\lambda^2\psi_{\lambda,1}.
	\end{align*}
	% such that
	% \begin{align*}
		%       \norm{f_{1}(1)}_{L^2\brac{\mu_\lambda}}^2&=\int_{Z} \brac{2\int_Z 1_{H_{\lambda,t} \cap W_{1}^{ 2}} (x,y) \lambda dy}^2 \lambda dx\\
		%       &=4\lambda^3\int_{Z}\brac{\int_Z
			%    1_{W_1}(x)1_{\overline{H}_{\lambda,1} \cap
				%    \widehat{W}_1}(y-x)   d(y-x)}^2  dx=4\ell_1\lambda^3
		%    \psi_{\lambda,1}^2 + \ell_1\lambda^2\psi_{\lambda,1}\\
		%       %
		%       \norm{f_{2}(1)}_{L^2\brac{\mu_\lambda^2}}^2&=\int_{Z^{ 2}}1_{H_{\lambda,1} \cap W_1^{ 2}} (x,y) \lambda^2 dx dy=\ell_1\lambda^2\psi_{\lambda,1}.
		% \end{align*}
	Based on the above expression for $\sigma^2$, we can consider three
	different regimes (similarly to what was done in \cite{lachieze-rey-peccati:2013:fine-gaussian-fluctuations}), namely
	\begin{enumerate}
		\item[-] Regime 1: $\lambda \psi_{\lambda,1}\to\infty$ as $\lambda\to\infty$;
		\item[-] Regime 2: $\lambda \psi_{\lambda,1}\to 1$ for $c>0$ as $\lambda\to\infty$;
		\item[-] Regime 3: $\lambda \psi_{\lambda,1}\to 0$ and $\lambda \sqrt{\psi_{\lambda,1}}\to \infty$ as $\lambda\to\infty$. 
	\end{enumerate}
	Within Regime 1, $\sigma^2$ is dominated by
	$\norm{f_{1}(1)}_{L^2(\mu_\lambda)}^2$ for large values of $\lambda$,
	which implies
	\begin{align*}
		\sigma^2\asymp 4\ell_1\lambda^3 \psi_{\lambda,1}^2,
	\end{align*}
	whereas in Regime 2, we get
	\begin{align*}
		\sigma^2\asymp 4\ell_1\lambda^3 \psi_{\lambda,1}^2\asymp \ell_1\lambda^2\psi_{\lambda,1},
	\end{align*}
	and finally in Regime 3, it holds that
	\begin{align*}
		\sigma^2\asymp\ell_1\lambda^2\psi_{\lambda,1}.
	\end{align*}
	We are now ready to present the
	application of our results to edge counting in random graphs.  
	\begin{theorem}
		\label{theorem_edgecounting}
		As $\lambda\to\infty$, $\bar{F}_\lambda (t)$
		converges in $K=L^2([0,1])$ to a $K$-valued Gaussian random
		variable $Z$ with covariance function
		$\phi(s,t)=\E{Z(s)Z(t)}$. More specifically, 
		\begin{enumerate}
			\item[-] In Regime 1, $\phi(t,s)=\sqrt{ts(t\wedge s)}$ and
			\begin{align*}
				d_3\brac{\bar{F}_\lambda,Z}\lesssim \lambda^{-\frac{1}{2}}+\frac{1}{\lambda\psi_{\lambda,1}};
			\end{align*}
			\item[-] In Regime 2, $\phi(t,s)=\frac{4\sqrt{ts(t\wedge s)}+t\wedge s}{5}$ and
			\begin{align*}
				d_3\brac{\bar{F}_\lambda,Z}\lesssim \lambda^{-\frac{1}{2}}+\abs{\lambda \psi_{\lambda,1}- 1};
			\end{align*}
			\item[-]  In Regime 3, $\phi(t,s)=t\wedge s$
			which implies that $Z$ is a Brownian motion,
			and
			\begin{align*}
				d_3\brac{\bar{F}_\lambda,Z}\lesssim \lambda^{-1}\psi_{\lambda,1}^{-1/2}+\lambda \psi_{\lambda,1}.
			\end{align*}
		\end{enumerate}
	\end{theorem}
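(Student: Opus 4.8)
The plan is to feed $X=\bar F_\lambda$ into Theorem~\ref{theorem_contractionestimate}. Since $\bar F_\lambda(t)=I_1(g_1(t))+I_2(g_2(t))$ lives in the first two chaoses (so $N=2$ in the notation of Theorem~\ref{theorem_fourmomentHilbert}), the quantity $\beta$ there collapses to the explicit finite list of contraction norms of $g_1,g_2$ recorded in Example~\ref{remark_contraction_order2}, and the theorem delivers a bound of the form
\begin{align*}
	d_3\brac{\bar F_\lambda,Z}\lesssim\sqrt{\beta}+\norm{S_\lambda-S'}_{\operatorname{HS}},
\end{align*}
$S_\lambda$ being the covariance operator of $\bar F_\lambda$. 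So the proof breaks into two nearly independent tasks: (i) compute the covariance function $\phi_\lambda(s,t)=\E{\bar F_\lambda(s)\bar F_\lambda(t)}$, identify its limit $\phi$ (hence $S'$, and the process $Z$) in each of the three regimes, and bound $\norm{S_\lambda-S'}_{\operatorname{HS}}=\norm{\phi_\lambda-\phi}_{L^2([0,1]^2)}$; and (ii) bound each contraction norm occurring in $\beta$. Throughout, the only mechanism at work is the scaling $\ell_t=\sqrt t\,\ell_1$, $\psi_{\lambda,t}=\sqrt t\,\psi_{\lambda,1}$ together with the nesting $W_s\subseteq W_t$ and $H_{\lambda,s}\subseteq H_{\lambda,t}$ for $s\le t$; this turns every integrand in $(s,t)$ into a fixed positive power of $s\wedge t$ times powers of $\lambda,\psi_{\lambda,1},\ell_1$, so all $(s,t)$-integrations over $[0,1]^2$ contribute only harmless constants.

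For (i), orthogonality of Poisson chaoses of distinct orders and the multiple-integral isometry give $\sigma^2\phi_\lambda(s,t)=\inner{f_1(s),f_1(t)}_{L^2(\mu_\lambda)}+2\inner{f_2(s),f_2(t)}_{L^2(\mu_\lambda^2)}$; evaluating the two inner products with the scaling identities yields, up to boundary corrections of strictly smaller order, $\sigma^2\phi_\lambda(s,t)\approx 4\ell_1\lambda^3\psi_{\lambda,1}^2\,\sqrt{st(s\wedge t)}+\ell_1\lambda^2\psi_{\lambda,1}\,(s\wedge t)$. Dividing by $\sigma^2=4\ell_1\lambda^3\psi_{\lambda,1}^2+\ell_1\lambda^2\psi_{\lambda,1}$, the two mixing weights become $\tfrac{4\lambda\psi_{\lambda,1}}{4\lambda\psi_{\lambda,1}+1}$ and $\tfrac{1}{4\lambda\psi_{\lambda,1}+1}$, and letting $\lambda\to\infty$ in each regime reproduces exactly the stated $\phi$: $\sqrt{st(s\wedge t)}$ in Regime~1, $\tfrac{1}{5}\brac{4\sqrt{st(s\wedge t)}+s\wedge t}$ in Regime~2, and $s\wedge t$ (so $Z$ is a Brownian motion) in Regime~3. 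Then $\phi_\lambda-\phi$ equals a bounded function of $(s,t)$ times, respectively, $(\lambda\psi_{\lambda,1})^{-1}$, $\abs{\lambda\psi_{\lambda,1}-1}$, $\lambda\psi_{\lambda,1}$, plus the lower-order correction; integrating over $[0,1]^2$ gives the claimed contribution of $\norm{S_\lambda-S'}_{\operatorname{HS}}$. (This step may equally well be carried out exactly as in the proof of Theorem~\ref{theorem_Besov}.)

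For (ii), by bilinearity every contraction satisfies $g_i\star^l_r g_j=\sigma^{-2}\brac{f_i\star^l_r f_j}$, and each contraction of the kernels $f_1(t)(x)=2\lambda\int 1_{H_{\lambda,t}\cap W_t^2}(x,y)\,dy$, $f_2(t)(x,y)=1_{H_{\lambda,t}\cap W_t^2}(x,y)$ is, after the same scaling bookkeeping, a fixed power of $\lambda$, $\psi_{\lambda,1}$, $\ell_1$ times an integrable power of $s\wedge t$. Tracking which term is largest relative to $\sigma^4$: in Regimes~1 and~2 the dominant one is $\norm{g_1\star^0_1 g_1}^2\asymp\lambda^{-1}$, so $\sqrt\beta\lesssim\lambda^{-1/2}$; in Regime~3, where $\sigma^2\asymp\ell_1\lambda^2\psi_{\lambda,1}$, the dominant one is the fourth-moment contraction $\norm{g_2\star^0_2 g_2}^2\asymp\lambda^{-2}\psi_{\lambda,1}^{-1}$, so $\sqrt\beta\lesssim\lambda^{-1}\psi_{\lambda,1}^{-1/2}$. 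Adding the contributions of (i) and (ii) produces the three asserted rates, and since $d_3$ metrizes convergence in law, $\bar F_\lambda\to Z$ in $K=L^2([0,1])$.

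The hard part is the same local-geometry estimate already faced in the one-dimensional treatment of~\cite{lachieze-rey-peccati:2013:fine-gaussian-fluctuations}: evaluating the volumes $\int 1_{H_{\lambda,t}\cap W_t^2}$ and their contractions honestly, i.e. controlling the window-boundary and edge-neighborhood corrections hidden in the ``$\approx$'' signs above, and then checking regime by regime that these corrections are indeed dominated by the claimed rates. The additional $t$-dependence coming from the growing window $W_t=t^{1/(2d)}W$ and edge set $H_{\lambda,t}=t^{1/(2d)}H_\lambda$ is, by contrast, cost-free, since it only ever produces integrable powers of $s\wedge t$; this is precisely what makes the functional upgrade of the one-dimensional central limit theorem essentially automatic once Theorem~\ref{theorem_contractionestimate} is available.
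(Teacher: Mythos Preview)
Your plan is correct and matches the paper's proof essentially line for line: both invoke Theorem~\ref{theorem_contractionestimate}, split the bound into the covariance term $\norm{S_\lambda-S'}_{\operatorname{HS}}$ and the contraction block $\beta$, compute the covariance kernel via the chaos isometry and the scaling $\ell_t=\sqrt t\,\ell_1$, $\psi_{\lambda,t}=\sqrt t\,\psi_{\lambda,1}$ to identify $\phi$ in each regime, and then estimate the six contractions of Example~\ref{remark_contraction_order2} (the paper likewise writes out only two of them and defers the rest to the one-dimensional calculations in~\cite{lachieze-rey-peccati:2013:fine-gaussian-fluctuations}). Your identification of the dominant contraction in each regime ($g_1\star^0_1 g_1$ in Regimes~1--2, $g_2\star^0_2 g_2$ in Regime~3) agrees with the paper's bookkeeping; if anything, your remark about window-boundary corrections is more careful than the paper, which silently treats the inner-product formulas as exact.
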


	\begin{proof}
		In order to make use of Theorem
		\ref{theorem_contractionestimate}, we will need to evaluate
		contraction norms, but also the Hilbert-Schmidt norm of the
		difference between the covariance operators, i.e.,
		$\norm{S-S'}_{\operatorname{HS}}$. Let us start with this term
		before we turn to the contraction norms themselves. As before, $S_\lambda$ and $S'$ denotes the covariance operator of $\bar{F}_\lambda$ and $Z$ respectively. Based on \cite[Theorem 7.4.3]{hsing-eubank:2015:theoretical-foundations-functional} and how Hilbert-Schmidt norms are defined for integral operators, we can use
		\begin{align*}
			\norm{S_\lambda-S'}_{\operatorname{HS(K)}}=&\norm{\E{\bar{F}_\lambda(t)\bar{F}_\lambda(s)}-\E{Z(t)Z(s)}}_{L^2\brac{[0,1]^{\otimes 2}}}\\
			\leq& \norm{\E{\bar{F}_\lambda(t)\bar{F}_\lambda(s)}-\E{Z(t)Z(s)}}_{\infty}.
		\end{align*}
		Our task is hence to compute $\E{\bar{F}_\lambda(t)\bar{F}_\lambda(s)}$. We have
		\begin{equation*}
			\inner{{f}_{1}(t),{f}_{1}(s)}_{L^2(\mu_\lambda)}=4\lambda^3\psi_{\lambda,t}\psi_{\lambda,s}\ell_{t\wedge
				s}=\sqrt{ts(t\wedge
				s)}4\ell_{1}\lambda^3\psi_{\lambda,1}^2
		\end{equation*}
		and
		\begin{equation*}
			\inner{{f}_{2}(t),{f}_{2}(s)}_{L^2(\mu^2_\lambda)}=\lambda^2\psi_{\lambda,t\wedge s}\ell_{t\wedge s}=(t\wedge s) \ell_{1}\lambda^2\psi_{\lambda,1},
		\end{equation*}
		so that
		\begin{align*}
			\E{\bar{F}_\lambda(t)\bar{F}_\lambda(s)}=&\frac{\inner{{f}_{1}(t),{f}_{1}(s)}_{L^2(\mu_\lambda)}+\inner{{f}_{2}(t),{f}_{2}(s)}_{L^2(\mu^2_\lambda)}}{\sigma^2}\\
			=&\frac{\sqrt{ts(t\wedge s)}4\lambda\psi_{\lambda,1}+t\wedge s}{4\lambda\psi_{\lambda,1}+1}.
		\end{align*}
		At this step, we need to
		differentiate our analysis depending on what regime we
		are in. 
		\\~\\
		\textbf{Regime 1:} We assume here that $\lambda
		\psi_{\lambda,1}\to\infty$. The limiting
		covariance operator $S'$ then has covariance function
		$\phi(t,s)=\sqrt{ts(t\wedge s)}$. We can use the fact that 
		for $a\ll A,b\ll B$,
		\begin{align*}
			\abs{\frac{A+a}{B+b}-\frac{A}{B}}\lesssim \abs{\frac{a}{B}}+\abs{\frac{b}{B}}
		\end{align*}
		in order to deduce that
		\begin{align}
			\label{estimatecovarregime1}
			\norm{S_\lambda-S'}_{\operatorname{HS(K)}}\leq \sup_{1\leq s,t\leq M}\abs{\E{\bar{F}_\lambda(t)\bar{F}_\lambda(s)}-\phi(t,s)}\lesssim \frac{1}{\lambda\psi_{\lambda,1}}.
		\end{align}
		~\\             
		\textbf{Regime 2:} Here, $\lambda \psi_{\lambda,1}\to 1$, so that the
		limiting covariance function is given by $\phi(t,s)=\frac{4\sqrt{ts(t\wedge s)}+t\wedge s}{5}$. Moreover,
		\begin{align}
			\label{estimatecovarregime2}
			\norm{S_\lambda-S'}_{\operatorname{HS(K)}}\leq& \sup_{1\leq s,t\leq M}\abs{\E{\bar{F}_\lambda(t)\bar{F}_\lambda(s)}-\phi(t,s)}\nonumber\\
			=&\sup_{1\leq s,t\leq M}\abs{\frac{4\sqrt{ts(t\wedge s)}\lambda \psi_{\lambda,1}+t\wedge s}{4\lambda \psi_{\lambda,1}+1} -\frac{4\sqrt{ts(t\wedge s)}+t\wedge s}{5}}
			\lesssim \abs{\lambda \psi_{\lambda,1}- 1}.
		\end{align}
		~\\
		\textbf{Regime 3:} The fact that $\lambda
		\psi_{\lambda,1}\to 0$ implies in this case that the
		limiting covariance function is given by
		$\phi(t,s)=t\wedge s$, and we hence have
		\begin{align}
			\label{estimatecovarregime3}
			\norm{S_\lambda-S'}_{\operatorname{HS(K)}}\lesssim  \frac{\lambda^3\psi_{\lambda,1}^2}{\lambda^2\psi_{\lambda,1}}   \asymp\lambda \psi_{\lambda,1}.
		\end{align}
		~\\
		We now turn to the second part of the bound appearing
		in Theorem \ref{theorem_contractionestimate}, namely
		the contraction norms. We need to evaluate the
		norms of $g_1(t)\star^0_1g_1(t)$,
		$g_1(t)\star^0_1g_2(t)$,
		$g_1(t)\star^1_1g_2(t)$,
		$g_2(t)\star^0_1g_2(t)$,
		$g_2(t)\star^0_2g_2(t)$ and
		$g_2(t)\star^1_1g_2(t)$. The
		calculations we need to perform are very similar to the ones
		appearing in the proof of \cite[Theorem 4.7]{lachieze-rey-peccati:2013:fine-gaussian-fluctuations}, hence we
		will not provide full details and proceed straight to the
		result. Let us still include two examples of these
		calculations (the cases of the contractions
		$g_1(t)\star^0_1g_1(t)$ and $g_2(t)\star^1_1g_2(t)$) for the reader's convenience and for the sake of
		staying self-contained. Recall that $W_t$, $H_{\lambda,t}$ are
		symmetric sets, $W_t$ (respectively $H_{\lambda,t}$) is
		contained in $W_{t'}$ (respectively $H_{\lambda,t'}$) for $t\leq t'$, and that $\psi_{\lambda,t}=
		\sqrt{t}\psi_{\lambda,1}$, while $\ell_t=\sqrt{t}\ell_1<\infty$. We
		can then write
		\begin{align*}
			&\norm{f_{1}(t)\star^0_1
				f_{1}(s)}^2_{L^2(\mu_\lambda)\otimes K^{\otimes
					2}}\\& \qquad =\norm{\int_{\R^d} \brac{4\int_{Z^2}1_{H_{\lambda,t} \cap W_t^{ 2}} (x,y) 1_{H_{\lambda,s} \cap W_s^{ 2}} (x,u)\lambda dy\lambda du  }^2 \lambda dx}_{K^{\otimes 2}}\\
			& \qquad\leq 16\lambda^5\norm{\int_{\R^d} \brac{\int_{\R^{2d}}1_{H_{\lambda,s\vee t} \cap W_{s\vee t}^{ 2}} (x,y) 1_{H_{\lambda,s\vee t} \cap W_{s\vee t}^{ 2}} (x,u)dydu  }^2  dx}_{K^{\otimes 2}}\\
			& \qquad \asymp \lambda^5 \norm{\int_{\R^d}
				\brac{\int_{\R^{2d}}1_{W_{s\vee t}}(x)1_{\overline{H}_{\lambda,s\vee t}
						\cap \widehat{W}_{s\vee t}}(y-x)
					1_{\overline{H}_{\lambda,s\vee t} \cap
						\widehat{W}_{s\vee t}}(u-x)d(y-x)d(u-x) }^2 dx}_{K^{\otimes 2}}\\
			& \qquad \asymp  \lambda^5\norm{\ell_{s\vee t}\psi_{\lambda,s\vee t}^4}_{K^{\otimes 2}}\asymp \lambda^5\psi_{\lambda,1}^4
		\end{align*}
		and
		\begin{align*}
			&\norm{f_{2}(t)\star^1_1
				f_{2}(s)}^2_{L^2(\mu^2_\lambda)\otimes K^{\otimes 2}}\\
			&\qquad \leq \norm{\int_{\R^{2d}}\brac{\int_{\R^d} 1_{H_{\lambda,{s\vee t}} \cap W_{s\vee t}^{ 2}}(x,y)1_{H_{\lambda,{s\vee t}} \cap W_{s\vee t}^{ 2}} (x,u)\lambda dx }^2 \lambda^2dydu}_{K^{\otimes 2}}\\
			&\qquad = \lambda^4 \norm{\int_{\R^{4d}} 1_{H_{\lambda,{s\vee t}} \cap W_{s\vee t}^{ 2}}(x,y)1_{H_{\lambda,{s\vee t}} \cap W_{s\vee t}^{ 2}}(x,u)1_{H_{\lambda,{s\vee t}} \cap W_{s\vee t}^{ 2}}(v,y)1_{H_{\lambda,{s\vee t}} \cap W_{s\vee t}^{ 2}}(v,u)dxdydudv}_{K^{\otimes 2}}\\
			&\qquad \leq \lambda^4
			\bigg\lVert\int_{\R^{4d}}1_{W_{s\vee t}}(x)1_{\overline{H}_{\lambda,{s\vee t}}
				\cap
				\widehat{W}_{s\vee t}}(y-x)1_{\overline{H}_{\lambda,{s\vee t}}
				\cap \widehat{W}_{s\vee t}}(u-x) 1_{W_{s\vee t}}(x)1_{\overline{H}_{\lambda,{s\vee t}}
				\cap
				\widehat{W}_{s\vee t}}(y-v) \\&\qquad\qquad\qquad\qquad\qquad\qquad\qquad\qquad\qquad\qquad\qquad\qquad\qquad\quad dxd(y-x)d(u-v)d(v-y)\bigg\lVert_{K^{\otimes 2}}\\
			&\qquad\asymp \lambda^4\norm{\ell_{s\vee t}\psi_{\lambda,{s\vee t}}^3}_{K^{\otimes 2}}\asymp\lambda^4\psi_{\lambda,1}^3. 
		\end{align*}
		For the remaining contractions, performing similar calculations
		yields $\norm{f_{1}(t)\star^0_1 f_{2}(t)}^2_{L^2(\mu_\lambda^2)\otimes K^{\otimes 2}}\\\lesssim
		\lambda^4 \psi_{\lambda,1}^3$, $\norm{f_{2}(t)\star^0_1 f_{2 }(t)}^2_{L^2(\mu_\lambda^3)\otimes K^{\otimes 2}}\lesssim
		\lambda^3 \psi_{\lambda,1}^2$, $\norm{f_{2}(t)\star^0_2 f_{2}(t)}^2_{L^2(\mu_\lambda^2)\otimes K^{\otimes 2}}\lesssim
		\lambda^2\psi_{\lambda,1}$, and finally $\norm{f_{1}(t)\star^1_1 f_{2}(t)}^2_{L^2(\mu_\lambda)\otimes K^{\otimes 2}} \lesssim
		\lambda^5\psi_{\lambda,1}^4$. We split the remainder
		of the proof into three cases corresponding to the
		three possible regimes.
		\\~\\
		\textbf{Regime 1:} Here, $\lambda
		\psi_{\lambda,1}\to\infty$ as $\lambda\to\infty$, and
		since $\sigma^2\asymp \lambda^3 \psi_{\lambda,1}^2$,
		we have
		$\norm{g_1(t)\star^0_1g_1(t)}^2_{L^2(\mu_\lambda)\otimes K^{\otimes 2}}\\\lesssim
		\lambda^{-1}$, $\norm{g_1(t)\star^0_1
			g_2(t)}^2_{L^2(\mu_\lambda^2)\otimes K^{\otimes 2}}\lesssim
		\lambda^{-2}\psi_{\lambda,1}^{-1}$,
		$\norm{g_2(t)\star^0_1
			g_2(t)}^2_{L^2(\mu_\lambda^3)\otimes K^{\otimes 2}} \lesssim
		\lambda^{-3}\psi_{\lambda,1}^{-2}$,
		$\\ \norm{g_2(t)\star^0_2
			g_2(t)}^2_{L^2(\mu_\lambda^2)\otimes K^{\otimes 2}} \lesssim
		\lambda^{-4}\psi_{\lambda,1}^{-3}$,
		$\norm{g_2(t)\star^1_1
			g_2(t)}^2_{L^2(\mu_\lambda^2)\otimes K^{\otimes 2}}
		\lesssim \lambda^{-2}\psi_{\lambda,1}^{-1}$ and lastly
		$\\\norm{g_1(t)\star^1_1
			g_2(t)}^2_{L^2(\mu_\lambda)\otimes K^{\otimes 2}} \lesssim
		\lambda^{-1}$.  Note that all the above
		estimates are asymptotically bounded from
		above by $\lambda^{-1}$, and using \eqref{estimatecovarregime1}, the estimate in Theorem \ref{theorem_contractionestimate} yields
		\begin{align*}
			d_3\brac{\bar{F}_\lambda,Z}\lesssim \lambda^{-\frac{1}{2}}+\frac{1}{\lambda\psi_{\lambda,1}}.
		\end{align*}
		~\\
		\textbf{Regime 2:} As in this case, we have $\lambda
		\psi_{\lambda,1}\to 1$ as $\lambda\to\infty$, we get
		$\sigma^2\asymp\lambda^3
		\psi_{\lambda,1}^2\asymp\lambda^2\psi_{\lambda,1}$. Therefore, we can reuse the computations from Regime 1
		combined with \eqref{estimatecovarregime2} to get
		\begin{align*}
			d_3\brac{\bar{F}_\lambda,Z}\lesssim \lambda^{-\frac{1}{2}}+\abs{\lambda \psi_{\lambda,1}- 1}.
		\end{align*}
		~\\
		\textbf{Regime 3:} In this regime, $\lambda
		\psi_{\lambda,1}\to 0$ and $\lambda
		\sqrt{\psi_{\lambda,1}}\to \infty$ as
		$\lambda\to\infty$, so that
		$\sigma^2\asymp\lambda^2\psi_{\lambda,1}$. This
		allows us to deduce that
		$\norm{g_1(t)\star^0_1g_1(t)}^2_{L^2(\mu_\lambda)\otimes K^{\otimes 2}}
		\lesssim \lambda\psi^2_{\lambda,1}$,
		$\norm{g_1(t)\star^0_1
			g_2(t)}^2_{L^2(\mu_\lambda^2)\otimes K^{\otimes 2}} \lesssim
		\psi_{\lambda,1}$,
		$\norm{g_2(t)\star^0_1
			g_2(t)}^2_{L^2(\mu_\lambda^3)\otimes K^{\otimes 2}} \lesssim
		\lambda^{-1}$, $\norm{g_2(t)\star^0_2
			g_2(t)}^2_{L^2(\mu_\lambda^2)\otimes K^{\otimes 2}} \lesssim
		\lambda^{-2}\psi_{\lambda,1}^{-1}$,
		$\norm{g_2(t)\star^1_1
			g_2(t)}^2_{L^2(\mu_\lambda^2)\otimes K^{\otimes 2}}\\
		\lesssim \psi_{\lambda,1}$ and
		$\norm{g_1(t)\star^1_1
			g_2(t)}^2_{L^2(\mu_\lambda)\otimes K^{\otimes 2}} \lesssim
		\lambda\psi_{\lambda,1}^2$. Since
		$\lambda^{-2}\ll \psi_{\lambda,1}\ll
		\lambda^{-1}$, all terms listed are
		asymptotically bounded by
		$\lambda^{-2}\psi_{\lambda,1}^{-1}$. Combining
		this fact with \eqref{estimatecovarregime3} yields
		\begin{align*}
			d_3\brac{\bar{F}_\lambda,Z}\lesssim \lambda^{-1}\psi_{\lambda,1}^{-1/2}+\lambda \psi_{\lambda,1},
		\end{align*}
		which concludes the proof.
	\end{proof}

	\section*{Appendix}
	\label{sec:appendix}
	This section gathers ancillary lemmas used in the proofs of our main
	results as well as in the different applications presented in this
	paper. 
	\subsection{Lemmas related to the proofs of Theorems \ref{theorem_fourmomentHilbert} and \ref{theorem_contractionestimate}}
	Our first two lemma contain important results from \cite{dobler-vidotto-zheng:2018:fourth-moment-theorems} which we
	restate here for reader's convenience. 
	\begin{lemma}
		\label{lemmaexchangeablepairDVZ}
		Let $p,q \geq 1$ be integers, and let
		$F_q=I_q^{\eta}(f_q),G_p=I_p^{\eta}(g_p)$ and
		$F^t_q=I_q^{\eta^t}(f_q),G^t_p=I_p^{\eta^t}(g_p)$ be
		real-valued Poisson multiple integrals as constructed
		in Section \ref{Section_prelim}. Assume further that $\E{F_q^4},\E{G_p^4}<\infty$. Then, the following limits hold almost surely.
		\begin{enumerate}[label=(\alph*)]
			\item $\lim_{t\to 0}\frac{1}{t}\E{F^t_q-F_q|\eta}=-qF$
			\item $ \lim_{t\to 0}\frac{1}{t}\E{(F^t_q-F_q)(G^t_p-G_p)|\eta}=2\widetilde{\Gamma}(F_q,G_p)$
			\item $ \lim_{t\to 0}\frac{1}{t}\E{F^t_q(G^t_p-G_p)|\eta}=2\widetilde{\Gamma}(F_q,G_p)-pF_qG_p$
			\item $\lim_{t\to 0} \frac{1}{t}\E{(F^t_q-F_q)^4}=-4q\E{F_q^4}+12\E{F_q^2\widetilde{\Gamma}(F_q,F_q)}$.
		\end{enumerate}
	\end{lemma}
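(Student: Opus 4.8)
The plan is to derive all four limits from three facts recorded earlier in the paper: the thinning/Mehler identity $\widetilde P_t h(\eta)=\E{h(\eta^t)\mid\eta}$; the spectral description of the Ornstein--Uhlenbeck semigroup, namely that $\mathcal H_m$ is the eigenspace of $-\widetilde L$ for the eigenvalue $m$, so that $\widetilde P_t\Phi=\sum_m e^{-mt}\widetilde J_m(\Phi)$ and hence $t^{-1}(\widetilde P_t\Phi-\Phi)\to\widetilde L\Phi$ for every $\Phi$ with a finite $L^2$ chaos expansion; and the carr\'e-du-champ relation $2\widetilde\Gamma(F,G)=\widetilde L(FG)-F\widetilde LG-G\widetilde LF$, combined with $\widetilde LF_q=-qF_q$ and $\widetilde LG_p=-pG_p$. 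Throughout I use that, under the fourth-moment integrability in force in the ambient results, the products $F_qG_p$ and $F_q^2$ have finite $L^2$ chaos expansions by the product formula \eqref{productformulaPoisson} (equivalently, by the D\"obler--Peccati product formula), so that $\widetilde P_t$ acts on them as a finite exponential polynomial in $t$; this both legitimises differentiating at $t=0$ term by term and makes the convergence in (a)--(c) hold surely rather than merely in $L^2$.

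For (a), conditioning and the Mehler formula give $\E{F_q^t-F_q\mid\eta}=\widetilde P_tF_q-F_q=(e^{-qt}-1)F_q$, and dividing by $t$ and letting $t\downarrow0$ yields $-qF_q$ everywhere. For (b) I would expand $(F_q^t-F_q)(G_p^t-G_p)=F_q^tG_p^t-F_q^tG_p-F_qG_p^t+F_qG_p$, take $\E{\cdot\mid\eta}$, and use that $F_q,G_p$ are $\sigma(\eta)$-measurable while $F_q^tG_p^t=(fg)(\eta^t)$ with $F_qG_p=(fg)(\eta)$; this turns the expression into $\widetilde P_t(F_qG_p)-e^{-qt}F_qG_p-e^{-pt}F_qG_p+F_qG_p=[\widetilde P_t(F_qG_p)-F_qG_p]-(e^{-qt}-1)F_qG_p-(e^{-pt}-1)F_qG_p$, whose normalised limit is $\widetilde L(F_qG_p)+qF_qG_p+pF_qG_p=\widetilde L(F_qG_p)-G_p\widetilde LF_q-F_q\widetilde LG_p=2\widetilde\Gamma(F_q,G_p)$. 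Item (c) is the identical computation with only $-F_q^tG_p$ subtracted, leaving $\widetilde P_t(F_qG_p)-e^{-qt}F_qG_p$ and the limit $\widetilde L(F_qG_p)+qF_qG_p=2\widetilde\Gamma(F_q,G_p)-pF_qG_p$.

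Item (d) is the one needing more care. First I would expand $(F_q^t-F_q)^4$ by the binomial theorem and exploit the exchangeability of $(F_q^t,F_q)$ (so that $F_q^t\overset{d}{=}F_q$ and $\E{(F_q^t)^3F_q}=\E{F_q^3F_q^t}$) to reduce it to
\begin{equation*}
\E{(F_q^t-F_q)^4}=2\E{F_q^4}-8\,\E{F_q^3F_q^t}+6\,\E{(F_q^t)^2F_q^2}.
\end{equation*}
Conditioning on $\eta$ turns the middle term into $\E{F_q^3\,\widetilde P_tF_q}=e^{-qt}\E{F_q^4}$ (using $\widetilde P_tF_q=e^{-qt}F_q$, which conveniently avoids any integrability assumption on $F_q^3$), and the last term into $\E{F_q^2\,\widetilde P_t(F_q^2)}=\sum_{m=0}^{2q}e^{-mt}\norm{\widetilde J_m(F_q^2)}_{L^2}^2$ by orthogonality of the chaoses. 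The right-hand side is thus a finite exponential polynomial in $t$ vanishing at $t=0$, so $\lim_{t\downarrow0}t^{-1}\E{(F_q^t-F_q)^4}$ equals its derivative at $0$, namely $8q\,\E{F_q^4}+6\,\E{F_q^2\widetilde L(F_q^2)}$. Finally, the carr\'e-du-champ relation with $F=G=F_q$ gives $\widetilde L(F_q^2)=2\widetilde\Gamma(F_q,F_q)+2F_q\widetilde LF_q=2\widetilde\Gamma(F_q,F_q)-2qF_q^2$, so $6\,\E{F_q^2\widetilde L(F_q^2)}=12\,\E{F_q^2\widetilde\Gamma(F_q,F_q)}-12q\,\E{F_q^4}$, and the two contributions combine to $-4q\,\E{F_q^4}+12\,\E{F_q^2\widetilde\Gamma(F_q,F_q)}$, as claimed.

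The main obstacle, as so often on the Poisson space, is not the algebra but the integrability bookkeeping beneath it: one must ensure that $F_qG_p$ and $F_q^2$ genuinely admit finite $L^2$ chaos expansions — equivalently, lie in $\operatorname{dom}(\widetilde L)$ — so that $t\mapsto\widetilde P_t(\cdot)$ collapses to a finite exponential polynomial and term-by-term differentiation at $t=0$ (and the exchange of limit and expectation) is legitimate. This is precisely where the finite fourth moment hypothesis is used, via the product formula \eqref{productformulaPoisson}; once it is secured, the almost-sure (rather than $L^2$) nature of the limits in (a)--(c) is automatic, since a finite sum $\sum_m t^{-1}(e^{-mt}-1)\widetilde J_m(\Phi)$ converges pointwise as $t\downarrow0$.
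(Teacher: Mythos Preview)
Your argument is correct and complete. The paper does not actually prove this lemma: it cites \cite[Proposition 3.2]{dobler-vidotto-zheng:2018:fourth-moment-theorems} for parts (a), (b) and (d), and derives (c) from (a) and (b) via the one-line decomposition $F_q^t(G_p^t-G_p)=(F_q^t-F_q)(G_p^t-G_p)+F_q(G_p^t-G_p)$. Your route is precisely the one taken in that reference --- Mehler representation to convert conditional expectations into semigroup evaluations, spectral diagonalisation to read off the $t$-derivative at $0$, and exchangeability to collapse the binomial expansion in (d) --- so there is no methodological divergence, only the difference between writing the proof out and citing it. Your closing remark on the integrability bookkeeping (that $F_qG_p$ and $F_q^2$ must lie in $\operatorname{dom}(\widetilde L)$, which is supplied by the fourth-moment assumption through the product formula) is exactly the hidden hypothesis that the cited result also relies on, and it is good that you make it explicit.
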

	\begin{proof}
		The proof of part $(a)$, $(b)$ and $(d)$ are in
		\cite[Proposition 3.2]{dobler-vidotto-zheng:2018:fourth-moment-theorems}. Part $(c)$ is a
		consequence of $(a)$ and $(b)$. 
	\end{proof}
	The following is an estimate from \cite[Lemma 2.2]{dobler-vidotto-zheng:2018:fourth-moment-theorems}, which improves upon a similar result in \cite[Lemma 3.1]{dobler-peccati:2018:fourth-moment-theorem}. 
		\begin{lemma}
			\label{lemma_DVZlemma2.2}
			Let $p,q \geq 1$ be integers, and let
			$F_q=I_q^{\eta}(f_q)$ and
			$F^t_q=I_q^{\eta^t}(f_q)$ be
			real-valued Poisson multiple integrals as constructed
			in Section \ref{Section_prelim}. {Assume further that $\E{F_q^4},\E{G_p^4}<\infty$.} Then, we have
			\begin{align*}
				\E{\widetilde{\Gamma}\brac{F_q,G_p}^2}\leq \brac{\frac{p+q-1}{2}}^2 \brac{ \E{F_{q}^2G_{p}^2}-\E{F_{q}^2}\E{G_{p}^2}-2\E{F_{q}G_{p}}^2}.
			\end{align*}
	\end{lemma}
	
	Our next lemma states a more general version of Lemma
	\ref{lemmaexchangeablepairDVZ}, part $(d)$.
	\begin{lemma}
		\label{lemma_limit_realvalued4thpower}
		Let $(X,X^t)$ be an exchangeable pair such that
		$X=\sum_{q\in\N}I_q^{\eta}(x_q)$ and
		$X^t=\sum_{q\in\N}I_q^{\eta^t}(x_q)$. Let the pairs
		$(Y,Y^t)$,$(U,U^t)$ and $(V,V^t)$ be defined in the
		same way. Assume further that $\E{X^4},\E{Y^4},\E{U^4}$ and $\E{V^4}$ are finite. Then, one has
		\begin{align*}
			\lim_{t\to\infty}\frac{1}{t}\E{(X^t-X)(Y_t-Y)(U^t-U)(V^t-V)}
			=&4\mathbb{E} \left[    \widetilde{\Gamma}(X,Y)UV+\widetilde{\Gamma}(X,V)YU\right.\\
			&\qquad\qquad\qquad\quad \left. +\widetilde{\Gamma}(X,U)YV+\widetilde{L}XYUV\right].
		\end{align*}
	\end{lemma}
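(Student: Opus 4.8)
The plan is to expand the fourfold product, pass to the semigroup $\widetilde{P}_t$ through the Mehler formula $\widetilde{P}_tf(\eta)=\E{f(\eta^t)\mid\eta}$ and the exchangeability of $(\eta,\eta^t)$, then differentiate at $t=0$ and simplify using the relations between $\widetilde{L}$ and $\widetilde{\Gamma}$ recalled in Section~\ref{Section_prelim}. (The limit is understood as $t\downarrow 0$, as in Lemma~\ref{lemmaexchangeablepairDVZ}.) Since $(X+X')^t=X^t+(X')^t$, for each $t>0$ the quantity $\frac1t\E{(X^t-X)(Y^t-Y)(U^t-U)(V^t-V)}$ is four-linear in $(X,Y,U,V)$, and so is the claimed right-hand side; it therefore suffices to treat single chaoses $X=I_q^\eta(x),\dots,V=I_n^\eta(v)$. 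For these, all products $XY,XYUV,\dots$ have finite chaos expansions, $\widetilde{P}_t$ acts on the $k$-th chaos by multiplication by $e^{-kt}$, and, under the integrability hypotheses in force in our applications, every expectation appearing below is a finite linear combination of maps $t\mapsto e^{-kt}$, so that interchanging $\lim_{t\to0}\tfrac1t$ with $\E{\cdot}$ is legitimate.

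First I would expand: with $A^{(1)}=A^t$, $A^{(0)}=A$ and $(A_1,A_2,A_3,A_4)=(X,Y,U,V)$, one has $\prod_i(A_i^t-A_i)=\sum_{S\subseteq\{1,2,3,4\}}(-1)^{4-\abs{S}}\prod_{i\in S}A_i^t\prod_{i\notin S}A_i$. Conditioning on $\eta$ turns the term indexed by $S$ into $\E{\brac{\prod_{i\notin S}A_i}\,\widetilde{P}_t\brac{\prod_{i\in S}A_i}}$; exchangeability identifies the $S$-term with the $S^c$-term, and self-adjointness of $\widetilde{P}_t$ allows one to always place the factor $X$ inside $\widetilde{P}_t$. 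Collecting the sixteen signed contributions yields
\begin{align*}
\E{(X^t-X)(Y^t-Y)(U^t-U)(V^t-V)}&=2\E{XYUV}-2\sum_{\mathrm{cyc}}\E{\brac{\widetilde{P}_tX}YUV}\\
&\quad+2\brac{\E{\widetilde{P}_t(XY)\,UV}+\E{\widetilde{P}_t(XU)\,YV}+\E{\widetilde{P}_t(XV)\,YU}},
\end{align*}
where $\sum_{\mathrm{cyc}}$ runs over the four choices of a distinguished variable among $X,Y,U,V$. At $t=0$ the right-hand side equals $(2-8+6)\E{XYUV}=0$, so dividing by $t$ and letting $t\downarrow0$ produces the derivative at $0$; using $\frac{d}{dt}\widetilde{P}_tG\big|_{t=0}=\widetilde{L}G$, the limit equals
\begin{align*}
-2\sum_{\mathrm{cyc}}\E{\brac{\widetilde{L}X}YUV}+2\brac{\E{\widetilde{L}(XY)\,UV}+\E{\widetilde{L}(XU)\,YV}+\E{\widetilde{L}(XV)\,YU}}.
\end{align*}

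Finally I would substitute $\widetilde{L}(FG)=2\widetilde{\Gamma}(F,G)+F\widetilde{L}G+G\widetilde{L}F$ into the last three expectations and expand: the $\widetilde{\Gamma}$-contributions assemble into $4\,\E{\widetilde{\Gamma}(X,Y)UV+\widetilde{\Gamma}(X,U)YV+\widetilde{\Gamma}(X,V)YU}$, while among the remaining $\widetilde{L}$-contributions the coefficient of $\E{\brac{\widetilde{L}X}YUV}$ works out to $6-2=4$ and the coefficients of $\E{\brac{\widetilde{L}Y}XUV}$, $\E{\brac{\widetilde{L}U}XYV}$, $\E{\brac{\widetilde{L}V}XYU}$ to $2-2=0$; this is exactly the asserted identity (so the right-hand side is in fact symmetric in $X,Y,U,V$, despite its asymmetric presentation). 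The main obstacle is precisely this bookkeeping: one must pair the signed terms correctly via exchangeability, choose on which factor to invoke the $\widetilde{L}$–$\widetilde{\Gamma}$ relation in order to land on the stated form, and check that the leftover $\widetilde{L}$-coefficients cancel; a sign error or a mis-pairing at any stage corrupts the final constants.
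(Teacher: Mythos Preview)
Your proposal is correct and follows essentially the same route as the paper: expand the fourfold product, use exchangeability of $(\eta,\eta^t)$ to pair complementary subsets $S$ and $S^c$ (yielding the factor $2$), pass to the limit $t\downarrow 0$ to replace $\widetilde{P}_t$ by $\widetilde{L}$, and then rewrite via $\widetilde{L}(FG)=2\widetilde{\Gamma}(F,G)+F\widetilde{L}G+G\widetilde{L}F$ so that the surviving $\widetilde{L}$-term is $4\E{(\widetilde{L}X)YUV}$. The only cosmetic difference is that the paper phrases the limit step by invoking Lemma~\ref{lemmaexchangeablepairDVZ} on the differences $F^t-F$ and $(F^tG^t-FG)$ rather than writing the semigroup $\widetilde{P}_t$ explicitly and differentiating; your preliminary reduction to single chaoses and justification of the limit interchange are more detailed than what the paper records.
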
     
	\begin{proof}
		This limit is a consequence of exchangeability and
		Lemma \ref{lemmaexchangeablepairDVZ}. Indeed, denoting
		\begin{equation*}
			M_t = \frac{1}{t}\E{(X^t-X)(Y_t-Y)(U^t-U)(V^t-V)},
		\end{equation*}
		we can write
		\begin{align*}
			\lim_{t\to\infty}M_t=&2\lim_{t\to\infty}\frac{1}{t}\E{XYUV-X^tYUV-XY^tUV-XYU^tV-XYUV^t}\\&+2\lim_{t\to\infty}\frac{1}{t}\E{X^tY^tUV+X^tYU^tV+X^tYUV^t}\\
			=&2\lim_{t\to\infty}\frac{1}{t}\E{-\brac{X^t-X}YUV-X\brac{Y^t-Y}UV-XY\brac{U^t-U}V-XYU\brac{V^t-V}}\\
			&+2\lim_{t\to\infty}\frac{1}{t}\E{\brac{X^tY^t-XY}UV+\brac{X^tU^t-XU}YV+\brac{X^tV^t-XV}YU}\\
			=&2\mathbb{E}\big[-\widetilde{L}XYUV-X\widetilde{L}YUV-XY\widetilde{L}UV-XYU\widetilde{L}V+\widetilde{L}(XY)UV+\widetilde{L}(XU)YV\\
			&+\widetilde{L}(XV)YU\big]\\
			=&4\E{\widetilde{\Gamma}(X,Y)UV+\widetilde{\Gamma}(X,V)YU+\widetilde{\Gamma}(X,U)YV+\widetilde{L}XYUV}.
		\end{align*}
	\end{proof}
	
	\begin{lemma}
		\label{lemmapositive4thmoment}
		Let $X=\sum_{1\leq q\leq N}F_q$, where $F_q\in
		\mathcal{H}^q(K)$ with covariance operator
		$S_q$ and $\E{\norm{F_q}_K^4}<\infty$. Furthermore, letting $\left\{ k_i \right\}_{i
			\in \N}$ be an orthonormal basis of $K$, $F_q$ can
		be written as $\sum_{i\in \N}F_{q,i}k_i$, where
		$F_{q,i} = \left\langle F_q,k_i \right\rangle_K$. Then, it holds that
		\begin{align*}
			\E{F_{q,i}^2F_{p,j}^2}-\E{F_{q,i}^2}\E{F_{p,j}^2}-2\E{F_{q,i}F_{p,j}}^2\geq 0,
		\end{align*}
		which leads to 
		\begin{align*}
			\E{\norm{F_q}^4_K}-\E{\norm{F_q}^2_K}^2-2\norm{S_q}^2_{\operatorname{HS}}\geq 0
		\end{align*}
		and             
		\begin{align*}
			\E{\norm{F_q}^2_K\norm{F_p}^2_K}-\E{\norm{F_q}^2_K}\E{\norm{F_p}^2_K}\geq 0 \text{ when $q\neq p$.}
		\end{align*}
	\end{lemma}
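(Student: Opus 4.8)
The key observation is that each real-valued random variable $F_{q,i} = I_q(f_{q,i})$ lives in a \emph{fixed} Poisson chaos, and the claimed inequality
\[
\E{F_{q,i}^2 F_{p,j}^2} - \E{F_{q,i}^2}\E{F_{p,j}^2} - 2\E{F_{q,i}F_{p,j}}^2 \geq 0
\]
is exactly a known fourth-moment-type positivity statement on the Poisson space: it is Lemma 2.2 (or the relevant positivity lemma) of \cite{dobler-vidotto-zheng:2018:fourth-moment-theorems}, applied to the pair $(F_{q,i}, F_{p,j})$. So the first step is simply to invoke that result (noting that $F_{q,i},F_{p,j}$ have finite fourth moments under our standing assumption $\E{\norm{X}_K^4}<\infty$, since $\norm{F_{q,i}}_{L^4}\le \norm{X}_{L^4(\Omega;K)}$ up to a constant, and these live in single chaoses). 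Alternatively, one can derive it directly from the product formula \eqref{productformulaPoisson}: writing out $\E{F_{q,i}^2 F_{p,j}^2}$ via \eqref{fourthmoment} and subtracting the $(0,0,0,0)$ and $(q\wedge p,\dots)$ terms (which account precisely for $\E{F_{q,i}^2}\E{F_{p,j}^2}$ and $2\E{F_{q,i}F_{p,j}}^2$, as made explicit in the proof of Theorem~\ref{theorem_contractionestimate}), leaving a sum over $(r,s,l,m)\in I$ which is nonnegative by Cauchy--Schwarz together with the elementary bound $2\inner{a\widetilde\star^l_r b, a\widetilde\star^m_s b}\le \norm{a\widetilde\star^l_r b}^2 + \norm{a\widetilde\star^m_s b}^2$ and positivity of the combinatorial coefficients $c_{p,q,l,m}(r,s)$; one has to be slightly careful that the cross terms combine with the correct signs, which is the only delicate point.

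Granting the pointwise (in $i,j$) inequality, the second step is to sum over $i,j\in\N$. For the first consequence, take $p=q$ and sum over all $i,j$:
\[
\sum_{i,j\in\N}\Big(\E{F_{q,i}^2F_{q,j}^2}-\E{F_{q,i}^2}\E{F_{q,j}^2}-2\E{F_{q,i}F_{q,j}}^2\Big)\geq 0,
\]
and then identify the three sums. By monotone convergence (all terms after Cauchy--Schwarz rearrangement are handled as in Section~\ref{Section_prooffourmomentHilbert}), $\sum_{i,j}\E{F_{q,i}^2F_{q,j}^2}=\E{\big(\sum_i F_{q,i}^2\big)^2}=\E{\norm{F_q}_K^4}$, $\sum_{i,j}\E{F_{q,i}^2}\E{F_{q,j}^2}=\E{\norm{F_q}_K^2}^2$, and $\sum_{i,j}\E{F_{q,i}F_{q,j}}^2=\sum_{i,j}\inner{S_q k_i,k_j}_K^2=\norm{S_q}_{\operatorname{HS}}^2$, using the identity $\inner{S_q f,g}_K=\E{\inner{F_q,f}_K\inner{F_q,g}_K}$ already recorded in the excerpt. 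This yields $\E{\norm{F_q}_K^4}-\E{\norm{F_q}_K^2}^2-2\norm{S_q}_{\operatorname{HS}}^2\geq 0$.

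For the third consequence, fix $q\neq p$. Here the pointwise inequality gives something slightly stronger than needed, since it includes the extra $-2\E{F_{q,i}F_{p,j}}^2$ term; simply dropping it (it is $\le 0$, so dropping it only weakens the left side—wait, we need the other direction, so instead note $\E{F_{q,i}F_{p,j}}=0$ because $F_{q,i}\in\mathcal H_q$ and $F_{p,j}\in\mathcal H_p$ are orthogonal for $q\ne p$). Thus for $q\neq p$ the pointwise inequality reads $\E{F_{q,i}^2F_{p,j}^2}-\E{F_{q,i}^2}\E{F_{p,j}^2}\ge 0$ already, and summing over $i,j$ as above gives $\E{\norm{F_q}_K^2\norm{F_p}_K^2}-\E{\norm{F_q}_K^2}\E{\norm{F_p}_K^2}\ge 0$.

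\textbf{Main obstacle.} The only real work is establishing the pointwise inequality cleanly — either by citing \cite{dobler-vidotto-zheng:2018:fourth-moment-theorems} verbatim, or, if a self-contained argument is preferred, by checking that the expansion \eqref{fourthmoment} minus its two extremal terms is a genuine sum of nonnegative contributions. The subtlety in the latter route is matching up the off-diagonal terms $\inner{f_{q,i}\widetilde\star^l_r f_{p,j}, f_{q,i}\widetilde\star^m_s f_{p,j}}$ with $(r,s,l,m)\ne(s,r,m,l)$ and verifying they pair into nonnegative blocks; this is routine but must be done with care about the symmetrization $\widetilde\star$ versus $\star$. Everything after that — the summation over the orthonormal basis and the identification of limits — is a direct application of monotone convergence and the covariance identity, exactly in the spirit of the manipulations already carried out in the proof of Theorem~\ref{theorem_fourmomentHilbert}.
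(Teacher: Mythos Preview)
Your proposal is correct and follows essentially the same strategy as the paper: reduce to a known real-valued inequality from \cite{dobler-vidotto-zheng:2018:fourth-moment-theorems} for each fixed pair $(F_{q,i},F_{p,j})$, then sum over $i,j$ using the identifications $\sum_{i,j}\E{F_{q,i}^2F_{q,j}^2}=\E{\norm{F_q}_K^4}$, $\sum_{i,j}\E{F_{q,i}F_{q,j}}^2=\norm{S_q}_{\operatorname{HS}}^2$, and orthogonality $\E{F_{q,i}F_{p,j}}=0$ for $q\neq p$. The only minor discrepancy is the precise citation: the paper does not invoke Lemma~2.2 of \cite{dobler-vidotto-zheng:2018:fourth-moment-theorems} directly for the pointwise inequality (that lemma is the $\Gamma$-variance bound used elsewhere), but instead cites Section~5 of \cite{dobler-vidotto-zheng:2018:fourth-moment-theorems} for the lower bound $\E{J_{p+q}(F_{q,i}F_{p,j})^2}\geq \E{F_{q,i}F_{p,j}}^2+\E{F_{q,i}^2}\E{F_{p,j}^2}$ and then deduces positivity via the chaos decomposition $\E{F_{q,i}^2F_{p,j}^2}=\sum_{m=0}^{p+q}\E{J_m(F_{q,i}F_{p,j})^2}$, which is cleaner than your alternative product-formula route.
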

	\begin{proof}
		Per Remark \ref{remark_whyfinitefourmoment}, $\E{\norm{F_q}^4_K}<\infty$ implies that $\forall i\in \N, \E{F_{q,i}^4}<\infty$. Then by \cite[Section 5]{dobler-vidotto-zheng:2018:fourth-moment-theorems}, we have that
		\begin{align*}
			\E{J_{p+q}(F_{q,i}F_{p,j})^2}\geq \E{F_{q,i}F_{p,j}}^2+\E{F_{q,i}^2}\E{F_{p,j}^2},
		\end{align*}
		which yields
		\begin{align*}
			\E{F_{q,i}^2F_{p,j}^2}-\E{F_{q,i}^2}\E{F_{p,j}^2}-2\E{F_{q,i}F_{p,j}}^2\geq&
			\E{F_{q,i}^2F_{p,j}^2}-\E{F_{q,i}F_{p,j}}^2-\E{J_{p+q}(F_{q,i}F_{p,j})^2}\\
			\geq& \E{\sum_{m=1}^{p+q-1}J_m(F_{q,i}F_{p,j})^2}\\\geq& 0. 
		\end{align*}
		The second and third inequalities in the statement of
		our lemma immediately follow, since     
		\begin{align*}
			\E{\norm{F_q}^4_K}-\E{\norm{F_q}^2_K}^2-2\norm{S_q}^2_{\operatorname{HS}}=&
			\sum_{i,j\in\N}\E{F_{q,i}^2F_{q,j}^2}-\E{F_{q,i}^2}\E{F_{q,j}^2}-2\E{F_{q,i}F_{q,j}}^2\\
			\geq& 0
		\end{align*}
		and when $q\neq p$,
		\begin{align*}
			\E{\norm{F_q}^2_K\norm{F_p}^2_K}-\E{\norm{F_q}^2_K}\E{\norm{F_p}^2_K}= \sum_{i,j\in\N}\E{F_{q,i}^2F_{p,j}^2}-\E{F_{q,i}^2}\E{F_{p,j}^2}\geq 0.
		\end{align*}
	\end{proof}     
	
	The upcoming lemma is a version of Lemma \ref{lemmaexchangeablepairDVZ} in the setting of Hilbert-valued random variables. 
	\begin{lemma}
		\label{lemmaexchangeablepair}
		Let $X=\sum_{1\leq q\leq N}F_q$, where $F_q\in
		\mathcal{H}^q(K)$ with covariance operator
		$S_q$ and $\E{\norm{F_q}_K^4}<\infty$. It holds that
		
		\begin{enumerate}[label=(\alph*)]
			\item $\lim_{t\to 0}\frac{1}{t}\E{\inner{F_q^t-F_q,Dg(X)}_K}=-q\E{\inner{ F_q,Dg(X)}_K}.$
			\item   $\lim_{t\to 0}\frac{1}{t}\E{\norm{F^t_q-F_q}^2_K}= 2q\E{\norm{F_q}^2_K}.$
			\item $\lim_{t\to
				0}\frac{1}{2t}\E{\inner{-L^{-1}\brac{F_q^t-F_q},D^2g(X)(F_p^t-F_p)}_K}\\  \hspace*{16.5em}=\frac{1}{q}\sum_{i,j\in\N}\E{\widetilde{\Gamma}(F_{q,i},F_{p,j})\inner{k_i,D^2g(X)k_j }_K}.$
			\item
			$
			\lim_{t\to 0}\frac{1}{t}\E{\norm{F^t_q-F_q}_K^4}=
			4\sum_{i,j\in \N}\E{F_{q,i}^2\brac{\widetilde{\Gamma}(F_{q,j},F_{q,j})-q\E{F_{q,j}}}}\\\hspace*{11.5em}+8\sum_{i,j\in \N}\E{F_{q,i}F_{q,j}\brac{\widetilde{\Gamma}(F_{q,i},F_{q,j})-q\E{F_{q,i}F_{q,j}}}}\\
			\hspace*{11.5em}-4q\sum_{i,j\in \N}\brac{\E{F_{q,i}^2F_{q,j}^2}-\E{F_{q,i}^2}\E{F_{q,j}^2}-2\E{F_{q,i}F_{q,j}}^2}.
			$
		\end{enumerate}
		In particular, when $q=p$ then part (c) becomes
		\begin{align*}
			\lim_{t\to 0}\frac{1}{2t}\E{\inner{-L^{-1}\brac{F_q^t-F_q},D^2g(X)(F_q^t-F_q)}_K}=\Tr_K\brac{D^2g(X)\Gamma \brac{F_q,-L^{-1}F_q}}.
		\end{align*}
	\end{lemma}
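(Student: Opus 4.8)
The plan is to expand every quantity along the orthonormal basis $\{k_i\}_{i\in\N}$ of $K$ and thereby reduce the four limits to the real-valued statements of Lemmas~\ref{lemmaexchangeablepairDVZ} and~\ref{lemma_limit_realvalued4thpower}; the only genuinely new ingredient is the justification of the interchange of $\lim_{t\to0}$ with the resulting infinite sums. Parts (a) and (b) do not require any such interchange. Since $F_q$ lies in the $q$-th Poisson chaos, the tensorized Mehler formula $\widetilde{P}_t f(\eta)=\E{f(\eta^t)|\eta}$ gives $\E{F_q^t|\eta}=e^{-qt}F_q$, while exchangeability of $(F_q,F_q^t)$ gives $\E{\norm{F_q^t}_K^2}=\E{\norm{F_q}_K^2}$; using that $X$, hence $Dg(X)$, is $\sigma(\eta)$-measurable,
\begin{align*}
\E{\inner{F_q^t-F_q,Dg(X)}_K}&=\E{\inner{\E{F_q^t-F_q|\eta},Dg(X)}_K}=(e^{-qt}-1)\,\E{\inner{F_q,Dg(X)}_K},\\
\E{\norm{F_q^t-F_q}_K^2}&=2\E{\norm{F_q}_K^2}-2\,\E{\inner{F_q^t,F_q}_K}=2(1-e^{-qt})\,\E{\norm{F_q}_K^2},
\end{align*}
and dividing by $t$ and sending $t\to0$ proves (a) and (b).

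For (c), I would first use that $F_q^t$, being a multiple integral of order $q$ with respect to $\eta^t$, again lies in the $q$-th chaos, so $-L^{-1}(F_q^t-F_q)=\tfrac1q(F_q^t-F_q)$ and the left-hand side becomes $\tfrac1{2qt}\E{\inner{F_q^t-F_q,D^2g(X)(F_p^t-F_p)}_K}$. Expanding the inner product as $\sum_{i,j\in\N}(F_{q,i}^t-F_{q,i})\inner{k_i,D^2g(X)k_j}_K(F_{p,j}^t-F_{p,j})$ and writing $\Pi_M$ for the orthogonal projection of $K$ onto $\operatorname{span}(k_1,\dots,k_M)$, the part of the sum with $i>M$ or $j>M$ is bounded, uniformly in $t$, by a multiple of $\norm{D^2g}_\infty\bigl(\sum_{i>M}\E{F_{q,i}^2}\bigr)^{1/2}\E{\norm{F_p}_K^2}^{1/2}$ plus the symmetric term, using $\norm{D^2g(X)}_{\operatorname{op}}\le\norm{D^2g}_\infty$, the coordinatewise version of the identity from (b), and $1-e^{-qt}\le qt$. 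For fixed $M$ the remaining sum is finite: conditioning on $\eta$, pulling out $\inner{k_i,D^2g(X)k_j}_K$, and applying Lemma~\ref{lemmaexchangeablepairDVZ}(b) term by term, then letting $M\to\infty$, yields $\tfrac1q\sum_{i,j\in\N}\E{\widetilde\Gamma(F_{q,i},F_{p,j})\inner{k_i,D^2g(X)k_j}_K}$. For the displayed case $q=p$, bilinearity of $\Gamma$ gives $\Gamma(F_q,-L^{-1}F_q)=\tfrac1q\Gamma(F_q,F_q)$, and combining the identity $\inner{\Gamma(F_q,F_q)k_i,k_j}_K=\widetilde\Gamma(F_{q,i},F_{q,j})$ (property (vii) of the $K$-valued Dirichlet structure theorem recalled in Section~\ref{Section_prelim}) with the self-adjointness of $D^2g(X)$ identifies $\tfrac1q\sum_{i,j}\E{\widetilde\Gamma(F_{q,i},F_{q,j})\inner{k_i,D^2g(X)k_j}_K}$ with $\E{\Tr_K\bigl(D^2g(X)\Gamma(F_q,-L^{-1}F_q)\bigr)}$.

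For (d), I would expand $\norm{F_q^t-F_q}_K^4=\sum_{i,j\in\N}(F_{q,i}^t-F_{q,i})^2(F_{q,j}^t-F_{q,j})^2$ and apply Lemma~\ref{lemma_limit_realvalued4thpower} to each summand with the four random variables $F_{q,i},F_{q,i},F_{q,j},F_{q,j}$, which gives
\begin{align*}
\lim_{t\to0}\frac1t\E{(F_{q,i}^t-F_{q,i})^2(F_{q,j}^t-F_{q,j})^2}=4\E{F_{q,j}^2\,\widetilde\Gamma(F_{q,i},F_{q,i})}+8\E{F_{q,i}F_{q,j}\,\widetilde\Gamma(F_{q,i},F_{q,j})}+4\E{(\widetilde L F_{q,i})\,F_{q,i}F_{q,j}^2}.
\end{align*}
Using $\widetilde L F_{q,i}=-qF_{q,i}$ on the last term, summing over $i,j$, relabelling $i\leftrightarrow j$ in the first term, and adding and subtracting the constants $q\E{F_{q,j}^2}$ and $q\E{F_{q,i}F_{q,j}}$ in the first two terms brings the expression into the stated form; the exchange of $\sum_{i,j}$ with $\lim_{t\to0}$ is handled exactly as in (c), by truncating to $i,j\le M$ and bounding the tail uniformly in $t$ via the second- and fourth-moment estimates for $F_q^t-F_q$.

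I expect this last interchange — passing from the real-valued limits of Lemmas~\ref{lemmaexchangeablepairDVZ} and~\ref{lemma_limit_realvalued4thpower} to their Hilbert-valued counterparts, i.e.\ swapping $\lim_{t\to0}$ with the infinite basis sums and with the expectation — to be the only delicate point; everything else is either an exact computation or a direct substitution into the scalar lemmas. The truncation argument above resolves it, its crucial input being the exact identity $\E{\norm{F_q^t-F_q}_K^2}=2(1-e^{-qt})\E{\norm{F_q}_K^2}$ (and its order-four analogue), which make the tails uniformly small, together with the uniform boundedness of the first two derivatives of $g$.
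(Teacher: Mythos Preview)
Your proposal is correct and follows the paper's approach: expand along the basis $\{k_i\}$ and reduce each part to the scalar statements of Lemmas~\ref{lemmaexchangeablepairDVZ} and~\ref{lemma_limit_realvalued4thpower}. The paper does exactly this for all four parts, freely interchanging $\lim_{t\to0}$ with the infinite basis sums and with the outer expectation without comment; your direct computation of (a) and (b) via the identity $\E{F_q^t\mid\eta}=e^{-qt}F_q$ is in fact cleaner since it bypasses that interchange entirely, and the truncation argument you outline for (c) and (d) supplies the justification the paper leaves implicit.
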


	\begin{proof}
		Part $(a)$ follows from
		\begin{align*}
			\lim_{t\to 0}\frac{1}{t}\E{\inner{F_q^t-F_q,Dg(X)}_K}
			&=\lim_{t\to 0}\frac{1}{t}\sum_{i\in \N}\E{\inner{\brac{F^t_{q,i}-F_{q,i}} k_i,Dg(X)}_K}\\
			&=\sum_{i\in\N}\E{\lim_{t\to 0}\frac{1}{t}\E{F^t_{q,i}-F_{q,i}|\eta}{\inner{ k_i,Dg(X)}_K}}\\
			&=-q\sum_{i\in\N}\E{F_{q,i}{\inner{ k_i,Dg(X)}_K}}\\
			&=-q\E{\inner{F_q,Dg(X)}_K}.
		\end{align*}
		
		Part $(b)$ is a result of
		\begin{align*}
			\E{\lim_{t\to 0}\frac{1}{t}\E{\norm{F^t_q-F_q}^2_K|\eta}}
			=&\E{\sum_{i\in\N}\lim_{t\to 0}\frac{1}{t}\E{\brac{F^t_{q,i}-F_{q,i}}^2|\eta}}\\
			=&2\sum_{i\in \N}\E{\widetilde{\Gamma}\brac{F_{q,i},F_{q,i}}}\\ =&2q\E{\norm{F_q}^2_K}.
		\end{align*}
		For part $(c)$, we can write
		\begin{align*}
			&\lim_{t\to 0}\frac{1}{2t}\E{\inner{-L^{-1}\brac{F_q^t-F_q},D^2g(X)(F_p^t-F_p)}_K}\\
			&\hspace*{11em}=\lim_{t\to 0}\frac{1}{2t}\E{\inner{\sum_{i\in \N}\frac{1}{q}\brac{F^t_{q,i}-F_{q,i}}k_i,D^2g(X)\sum_{j\in \N}\brac{F^t_{p,j}-F_{p,j}}k_j}_K}\\
			&\hspace*{11em}=\frac{1}{q}\sum_{i,j\in \N}\E{\lim_{t\to 0}\frac{1}{2t}\E{\brac{F^t_{q,i}-F_{q,i}}\brac{F^t_{p,j}-F_{p,j}}|\eta} \inner{k_i,D^2g(X)k_j}_K}\\
			&\hspace*{11em}=\frac{1}{q}\sum_{i,j\in \N}\E{\widetilde{\Gamma}(F_{q,i},F_{p,j})\inner{k_i,D^2g(X)k_j}_K}.
		\end{align*}
		Using the above expression in the case $q=p$, along
		with the fact that
		\begin{align*}
			\Gamma \brac{F_q,F_q}k_j=\Gamma\brac{\sum_{i\in \N} F_{q,i}k_i,\sum_{m\in \N} F_{q,m}k_m}k_j&=\sum_{i,m\in \N}\Gamma\brac{F_{q,i}k_i,F_{q,m}k_m}k_j\\
			&=\sum_{i,m\in \N}\frac{1}{2}\widetilde{\Gamma}\brac{F_{q,i},F_{q,m}}\brac{k_i\otimes k_m+k_m\otimes k_i}k_j\\
			&=\sum_{i\in \N}\widetilde{\Gamma}\brac{F_{q,i},F_{q,j}}k_i
		\end{align*}
		yields
		\begin{align*}   
			\lim_{t\to 0}\frac{1}{2t}\E{\inner{-L^{-1}\brac{F_q^t-F_q},D^2g(X)(F_q^t-F_q)}_K}=\Tr_K\brac{D^2g(X)\Gamma \brac{F_q,-L^{-1}F_q}}.
		\end{align*}
		For part $(d)$, the exchangeability of $\brac{F_q,F_q^t}$ and Lemma \ref{lemma_limit_realvalued4thpower} imply
		\begin{align*}
			\lim_{t\to 0}\frac{1}{t}\E{\norm{F^t_q-F_q}_K^4}=&\lim_{t\to 0}\frac{1}{t}\E{\norm{\sum_{i\in\N}\brac{F_{q,i}^t-F_{q,i}}k_i}^4_K}\nonumber\\
			=&\lim_{t\to 0}\frac{1}{t}\E{\sum_{i,j\in \N}\brac{F_{q,i}^t-F_{q,i}}^2\brac{F_{q,j}^t-F_{q,j}}^2}\nonumber\\
			=&4\sum_{i,j\in
				\N}\E{F_{q,i}^2\brac{\widetilde{\Gamma}(F_{q,j},F_{q,j})-q\E{F_{q,j}^2}}}\nonumber
			\\
			&+8\sum_{i,j\in \N}\E{F_{q,i}F_{q,j}\brac{\widetilde{\Gamma}(F_{q,i},F_{q,j})-q\E{F_{q,i}F_{q,j}}}}\nonumber\\
			&-4q\sum_{i,j\in \N}\brac{\E{F_{q,i}^2F_{q,j}^2}-\E{F_{q,i}^2}\E{F_{q,j}^2}-2\E{F_{q,i}F_{q,j}}^2}.
		\end{align*}

	\end{proof}    
	%%%%%%%%%%%%%%%%%%%%%%%%%%%%%%%%%%%%%%%%%%%%%%%%%%%%
	
	Via Lemma \ref{lemmaexchangeablepair}, we will establish a fourth moment bound that will help with the remainder term in the proof of Theorem \ref{theorem_fourmomentHilbert}.
	
	\begin{lemma}
		\label{lemma_remaindertermbound}
		Let $X=\sum_{1\leq q\leq N}F_q$, where $F_q\in
		\mathcal{H}^q(K)$ with covariance operator
		$S_q$ and $\E{\norm{F_q}_K^4}<\infty$. It holds that 
		\begin{align*}
			&\lim_{t\to 0}\frac{1}{t} \E{\norm{-L^{-1}\brac{X^t-X}}_K\norm{X^t-X}^2_K}\\&\leq 2N\sqrt{\max_{1\leq p\leq N}\E{\norm{F_p}^2_K}}\sum_{1\leq q\leq N} \sqrt{4q-3}\sqrt{{\norm{F_q}_K^4-\E{\norm{F_q}^2_K}^2-2\norm{S_q}^2_{\operatorname{HS}}}}.
		\end{align*}
	\end{lemma}
	\begin{proof}
		
		% \lim_{t\to 0}\frac{1}{t} \norm{F^t_q-F_q}_K
		We have
		\begin{align*}
			\E{\norm{-L^{-1}\brac{X^t-X}}_K\norm{X^t-X}^2_K}
			&\leq \E{\brac{\sum_{1\leq q\leq N}\frac{1}{q}\norm{F^t_q-F_q}_K}\brac{\sum_{p=1}^N \norm{F^t_p-F_p}_K}^2}\\
			&\leq N\sum_{1\leq p,q\leq N}\frac{1}{q}\E{\norm{F^t_q-F_q}_K\norm{F^t_p-F_p}_K^2}\\
			&\leq N\sum_{1\leq p,q\leq N} \frac{1}{q}\sqrt{\E{\norm{F^t_q-F_q}_K^2}}\sqrt{\E{\norm{F^t_p-F_p}_K^4}}.
		\end{align*}
		The first line is due to chaos decomposition of $X,X^t$ and triangle inequality of $\norm{\cdot}_K$. The second line is a consequence of $\brac{\sum_{p=1}^N y_p}^2\leq N\sum_{i=1}^Ny_p^2$. The last line is due to H\"{o}lder's inequality.   
		
		Next, part (b) of Lemma \ref{lemmaexchangeablepair} implies that
		\begin{align}
			\label{estimate_rightsidecontainlimitF^4}
			&\lim_{t\to 0}\frac{1}{t} \E{\norm{-L^{-1}\brac{X^t-X}}_K\norm{X^t-X}^2_K}\nonumber\\
			&=N\sum_{1\leq p,q\leq N} \frac{1}{q}\sqrt{\lim_{t\to 0}\frac{1}{t}\E{\norm{F^t_q-F_q}_K^2}}\sqrt{\lim_{t\to 0}\frac{1}{t}\E{\norm{F^t_p-F_p}_K^4}}\nonumber\\
			&\leq  N\sqrt{2\max_{1\leq q\leq N}\E{\norm{F_q}^2_K}}\sum_{1\leq q\leq N}\sqrt{\lim_{t\to 0}\frac{1}{t}\E{\norm{F^t_q-F_q}_K^4}}.
		\end{align}
		Let us study the remaining limit on the right hand side. Lemma \ref{lemmaexchangeablepair} says 
		\begin{align}
			\label{limit_4thpowerqthchaos}
			\lim_{t\to 0}\frac{1}{t}\E{\norm{F^t_q-F_q}_K^4}
			=&4\sum_{i,j\in
				\N}\E{F_{q,i}^2\brac{\widetilde{\Gamma}(F_{q,j},F_{q,j})-q\E{F_{q,j}^2}}}\nonumber
			\\
			&+8\sum_{i,j\in \N}\E{F_{q,i}F_{q,j}\brac{\widetilde{\Gamma}(F_{q,i},F_{q,j})-q\E{F_{q,i}F_{q,j}}}}\nonumber\\
			&-4q\sum_{i,j\in \N}\brac{\E{F_{q,i}^2F_{q,j}^2}-\E{F_{q,i}^2}\E{F_{q,j}^2}-2\E{F_{q,i}F_{q,j}}^2}.
		\end{align}
		We will treat each term of
		\eqref{limit_4thpowerqthchaos} separately. For the
		first term of \eqref{limit_4thpowerqthchaos}, our
		proof will use an argument similar to the proof of \cite[Lemma 2.2]{dobler-vidotto-zheng:2018:fourth-moment-theorems} or \cite[Lemma 3.1]{dobler-peccati:2018:fourth-moment-theorem}. First, observe that if $k$ is a fixed positive integer and $J_k$ denotes the projection into the $k$-th Poisson chaos, then
		\begin{align*}
			\E{J_k\brac{\norm{F_p}^2_K}^2}=\sum_{i,j\in\N}\E{J_k\brac{F^2_{q,i}}J_k\brac{F^2_{q,j}}}.
		\end{align*}
		In particular, the expansion in \cite[Lemma 5.1]{dobler-vidotto-zheng:2018:fourth-moment-theorems} yields
		\begin{align*}
			\E{J_{2q}\brac{\norm{F_q}_K^{\otimes 2}}^2}&=\sum_{i,j\in\N}(2q)!\inner{f_{q,i}\widetilde{\otimes} f_{q,i},f_{q,j}\widetilde{\otimes} f_{q,j}}_{\mathfrak{H}^{2q}}\\&=\sum_{i,j\in\N}\brac{2\E{F_{q,i}F_{q,j}}^2+\sum_{r=1}^{q-1}q!^2{q\choose r}^2\inner{f_{q,i}{\star}_r^r f_{q,j},f_{q,j}{\star}_r^r f_{q,i}}_{\mathfrak{H}^{2q-2r}}}.
		\end{align*}
		Thus, the first term of \eqref{limit_4thpowerqthchaos} can be bounded via
		\begin{align*}
			\sum_{i,j\in \N}\E{F_{q,i}^2\brac{\widetilde{\Gamma}(F_{q,j},F_{q,j})-q\E{F_{q,j}^2}}}
			\leq& \frac{1}{2}\sum_{i,j\in \N}\sum_{k=1}^{2q-1}(2p-k)\E{J_k\brac{F_{q,i}^2}J_k\brac{F_{q,j}^2}}\\
			=&\frac{1}{2}\sum_{k=1}^{2q-1}(2p-k)\E{J_k\brac{\norm{F_q}^2_K}^2}\\
			\leq& \frac{2q-1}{2}\sum_{k=1}^{2q-1}\E{J_k\brac{\norm{F_q}^2_K}^2}\\
			=& \frac{2q-1}{2}\brac{\norm{F_q}_K^4-\E{\norm{F_q}^2_K}^2-2\norm{S_q}^2_{\operatorname{HS}}}\\
			&-\frac{2q-1}{2}\sum_{i,j\in \N}\sum_{r=1}^{q-1}q!^2{q\choose r}^2\inner{f_{q,i}{\star}_r^r f_{q,j},f_{q,j}{\star}_r^r f_{q,i}}_{\mathfrak{H}^{2q-2r}}.
		\end{align*}
		
		\noindent     The second term of
		\eqref{limit_4thpowerqthchaos} will receive a similar
		treatment. Based on \cite[Lemma 5.1]{dobler-vidotto-zheng:2018:fourth-moment-theorems}, we have
		\begin{align*}
			\E{J_{2q}(F_{q,i}F_{q,j})}&=\sum_{i,j\in\N}(2q)!\norm{f_{q,i}\widetilde{\otimes} f_{q,j}}^2_{\mathfrak{H}^{2q}}\\
			&=\sum_{i,j\in\N}\brac{\E{F_{q,i}F_{q,j}}^2+\E{F_{q,i}^2}\E{F_{q,j}^2}+\sum_{r=1}^q q!^2{q\choose r}^2\norm{f_{q,i}{\star}_r^r f_{q,j}}^2_{\mathfrak{H}^{2q-2r}}}.
		\end{align*}
		Hence,
		\begin{align*}
			\sum_{i,j\in \N}\E{F_{q,i}F_{q,j}\brac{\widetilde{\Gamma}(F_{q,i},F_{q,j})-q\E{F_{q,i}F_{q,j}}}}
			=&\frac{1}{2}\sum_{i,j\in \N}\sum_{k=1}^{2q-1}(2q-k)\E{J_k\brac{F_{q,i}F_{q,j}}^2}\\
			\leq& \frac{2q-1}{2}\sum_{i,j\in \N}\sum_{k=1}^{2q-1}\E{J_k\brac{F_{q,i}F_{q,j}}^2}\\
			=& \frac{2q-1}{2}\brac{\E{F_{q,i}^2F_{q,j}^2}-\E{F_{q,i}^2}\E{F_{q,j}^2}-2\E{F_{q,i}F_{q,j}}^2}\\
			&-\frac{2q-1}{2}\sum_{i,j\in \N}\sum_{r=1}^{q-1} q!^2{q\choose r}^2\norm{f_{q,i}{\star}_r^r f_{q,j}}^2_{\mathfrak{H}^{2q-2r}}
			\\
			\leq& \frac{2q-1}{2}\brac{\norm{F_q}_K^4-\E{\norm{F_q}^2_K}^2-2\norm{S_q}^2_{\operatorname{HS}}}\\
			&-\frac{2q-1}{4}\sum_{i,j\in \N}\sum_{r=1}^{q-1} q!^2{q\choose r}^2\norm{f_{q,i}{\star}_r^r f_{q,j}}^2_{\mathfrak{H}^{2q-2r}}.
		\end{align*}
		In addition, based on that fact that
		\begin{align*}
			&\sum_{i,j\in\N}\brac{2\norm{f_{q,i}{\star}_r^r f_{q,j}}^2_{\mathfrak{H}^{2q-2r}}+2\inner{f_{q,i}{\star}_r^r f_{q,j},f_{q,j}{\star}_r^r f_{q,i}}_{\mathfrak{H}^{2q-2r}}}\\
			& \qquad\qquad=\sum_{i,j\in\N}\brac{\norm{f_{q,i}{\star}_r^r f_{q,j}}^2_{\mathfrak{H}^{2q-2r}}+2\inner{f_{q,i}{\star}_r^r f_{q,j},f_{q,j}{\star}_r^r f_{q,i}}_{\mathfrak{H}^{2q-2r}}+\norm{f_{q,j}{\star}_r^r f_{q,i}}^2_{\mathfrak{H}^{2q-2r}}}\\
			&\qquad\qquad
			=\sum_{i,j\in\N}\brac{\norm{f_{q,i}{\star}_r^r
					f_{q,j}+f_{q,j}{\star}_r^r
					f_{q,i}}^2_{\mathfrak{H}^{2q-2r}}}\\
			&\qquad\qquad \geq 0,
		\end{align*}
		we deduce from \eqref{limit_4thpowerqthchaos} that
		\begin{align*}
			\lim_{t\to 0}\frac{1}{t}\E{\norm{F^t_q-F_q}^4_K}
			\leq  (8q-6)\brac{\norm{F_q}_K^4-\E{\norm{F_q}^2_K}^2-2\norm{S_q}^2_{\operatorname{HS}}}.
		\end{align*}
		Combining this with \eqref{estimate_rightsidecontainlimitF^4}, we arrive at the fourth moment bound in the statement of this lemma. 
	\end{proof}
	%%%%%%%%%%%%%%%%%%%%%%%%%%%%%%%%%%%%%%%%%%%%%%%%%%%%
	
	The result below is an adaptation to our setting of a classical
	combinatorial identity appearing in \cite[Proof of Proposition 11.2.2]{peccati-taqqu:2011:wiener-chaos-moments}.
	\begin{lemma}
		\label{lemma_contraction_00}
		The quantity $\norm{f_{q,i}\widetilde{\star}^0_0
			f_{p,j}}^2_{\mathfrak{H}^{q+p}}$ appearing in
		Equation \eqref{fourthmoment} can be written in terms
		of norms of non-symmetrized contractions as
		\begin{align*}
			(q+p)!\norm{f_{q,i}\widetilde{\star}^0_0 f_{p,j}}^2_{\mathfrak{H}^{q+p}}=&\bigg(q!p!\norm{f_{q,i}}^2_{\mathfrak{H}^q}\norm{f_{p,j}}^2_{\mathfrak{H}^p}+q!^2\inner{f_{q,i},f_{q,j}}^2_{\mathfrak{H}^q}\mathds{1}_{\left\{ q=p \right\}}
			\\
			&+q!p!{q\choose q\wedge p}{p\choose q\wedge
				p}\norm{f_{q,i} \star^{q\wedge p}_{q\wedge p}
				f_{p,j}}^2_{\mathfrak{H}^{\otimes \abs{q-p}}}
			\mathds{1}_{\left\{ q\neq p \right\}}\\
			&+\sum_{r=1}^{q\wedge p -1}q!p!{q\choose r}{p\choose r} \norm{f_{q,i} \star^r_r f_{p,j}}^2_{\mathfrak{H}^{q+p-2r}}\bigg).
		\end{align*}
	\end{lemma}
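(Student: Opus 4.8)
The starting point is the observation that $f_{q,i}\widetilde{\star}^0_0 f_{p,j}$ is, by definition, the canonical symmetrisation of the tensor product $f_{q,i}\otimes f_{p,j}$. Expanding the symmetrised norm as a double sum over permutations of the $q+p$ coordinates and using the change of variables $\sigma\mapsto\tau^{-1}\sigma$ in the inner sum (together with permutation-invariance of the product measure $\mu^{q+p}$) gives
\begin{align*}
(q+p)!\norm{f_{q,i}\widetilde{\star}^0_0 f_{p,j}}^2_{\mathfrak{H}^{q+p}}=\sum_{\sigma}\inner{(f_{q,i}\otimes f_{p,j})_{\sigma},\,f_{q,i}\otimes f_{p,j}}_{\mathfrak{H}^{q+p}},
\end{align*}
where the sum runs over all $(q+p)!$ permutations $\sigma$ of the coordinates and $h_{\sigma}$ denotes the permutation of the arguments of $h$. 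The plan is then to group the permutations according to the single integer $r=\abs{\sigma(A)\cap B}$, where $A=\{1,\dots,q\}$ indexes the ``$f_{q,i}$-arguments'' and $B=\{q+1,\dots,q+p\}$ the ``$f_{p,j}$-arguments''; this integer ranges over $0\le r\le q\wedge p$, and by symmetry of the two kernels the value of the summand depends on $\sigma$ only through $r$.

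To evaluate the summand for a fixed $r$, I would split the $q+p$ integration variables into four groups according to the pair (role in the reference copy, role in the $\sigma$-permuted copy): $q-r$ variables shared between the two occurrences of $f_{q,i}$, $p-r$ variables shared between the two occurrences of $f_{p,j}$, and two ``crossing'' groups of $r$ variables each, one linking the first $f_{q,i}$ to the second $f_{p,j}$ and the other linking the second $f_{q,i}$ to the first $f_{p,j}$. Integrating out each crossing group produces a copy of $f_{q,i}\star^r_r f_{p,j}$ (as a function of the remaining $q-r$ and $p-r$ variables), and integrating the leftover $q+p-2r$ variables then yields exactly $\norm{f_{q,i}\star^r_r f_{p,j}}^2_{\mathfrak{H}^{q+p-2r}}$. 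For the multiplicity, the summand depends only on the set $\sigma(A)$, and the number of $q$-subsets $S$ of $\{1,\dots,q+p\}$ with $\abs{S\cap B}=r$ is $\binom{q}{q-r}\binom{p}{r}=\binom{q}{r}\binom{p}{r}$; since each such $S$ is the image $\sigma(A)$ for exactly $q!\,p!$ permutations, the class contributes the coefficient $q!\,p!\binom{q}{r}\binom{p}{r}$.

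It then remains to record the two extreme values of $r$ separately. When $r=0$ both crossing groups are empty, the integral factorises, and one obtains the term $q!\,p!\,\norm{f_{q,i}}^2_{\mathfrak{H}^q}\norm{f_{p,j}}^2_{\mathfrak{H}^p}$. When $r=q\wedge p$ and $q=p$ there are no shared variables left, so $f_{q,i}\star^{q}_{q} f_{p,j}$ is the scalar $\inner{f_{q,i},f_{q,j}}_{\mathfrak{H}^q}$ and the term becomes $q!^2\inner{f_{q,i},f_{q,j}}^2_{\mathfrak{H}^q}$; when $r=q\wedge p$ and $q\neq p$ it is the genuine norm $\norm{f_{q,i}\star^{q\wedge p}_{q\wedge p} f_{p,j}}^2_{\mathfrak{H}^{\abs{q-p}}}$ with coefficient $q!\,p!\binom{q}{q\wedge p}\binom{p}{q\wedge p}$, and the intermediate indices $1\le r\le q\wedge p-1$ assemble into the last sum, reproducing the claimed identity. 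The computation is an adaptation of the one in \cite[Proof of Proposition 11.2.2]{peccati-taqqu:2011:wiener-chaos-moments}, and the only genuinely delicate point is the bookkeeping in the middle step: correctly matching each class of permutations with the contraction it produces, and checking that no group of variables is counted twice.
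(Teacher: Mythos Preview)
Your proposal is correct and follows essentially the same approach as the paper's proof: expand the squared norm of the symmetrisation as a sum over permutations, group the terms according to the overlap parameter $r=\abs{\sigma(A)\cap B}$, identify each class with the contraction norm $\norm{f_{q,i}\star^r_r f_{p,j}}^2$, and count multiplicities. The only difference is cosmetic: you first collapse the double sum over $(\pi,\rho)\in\mathfrak{S}_{q+p}^2$ to a single sum over $\sigma$ via the substitution $\sigma=\tau^{-1}\sigma$, whereas the paper keeps both permutations and fixes $\pi$ when counting---the combinatorics and the resulting coefficients are identical.
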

	\begin{proof}
		The procedure in \cite[Proof of Proposition 11.2.2]{peccati-taqqu:2011:wiener-chaos-moments} will be slightly modified to fit our situation. Let $\mathfrak{S}_{q+p}$ be the sets of all permutations of $(q+p)$ elements and assume $\pi,\rho\in \mathfrak{S}_{q+p}$. When the intersection set $\{\pi(1),\ldots,\pi(q) \}\cap \{\rho(q+1),\ldots,\rho(q+p) \}$ contains $r$ element, this will be denoted by $\pi \stackrel{r}{\sim}\rho$. Since $\mathfrak{H}=L^2(\mathcal{Z},\mu)$, we have that
		\begin{align}
			\label{contraction00}
			\norm{f_{q,i}\widetilde{\star}^0_0 f_{p,j}}^2_{\mathfrak{H}^{q+p}}=&\norm{{f_{q,i}\widetilde{\otimes} f_{p,j}}}^2_{\mathfrak{H}^{q+p}}\nonumber\\
			=&\frac{1}{(q+p)!^2}\sum_{\pi,\rho\in \mathfrak{S}_{q+p}}\int_{\mathcal{Z}^{q+p}}f_{q,i}\brac{z_{\pi(1)},\ldots,z_{\pi(q)}}f_{p,j}\brac{z_{\pi(q+1)},\ldots,z_{\pi(q+p)}}\nonumber\\
			&\qquad\qquad\qquad f_{q,i}\brac{z_{\rho(1)},\ldots,z_{\rho(q)}}f_{p,j}\brac{z_{\rho(q+1)},\ldots,z_{\rho(q+p)}}\mu(dz_1\ldots dz_{q+p})\nonumber\\
			=&\frac{1}{(q+p)!^2}\sum_{\pi\in \mathfrak{S}_{q+p}}\brac{\sum_{r=1}^{q\wedge p -1}\sum_{\pi\stackrel{r}{\sim} \rho}A_{1,r}+\sum_{\pi \stackrel{0}{\sim}\rho}A_2+\sum_{\pi \stackrel{q\wedge p}{\sim}\rho}A_3}.
		\end{align}
		For the second sum in \eqref{contraction00}, $\pi
		\stackrel{0}{\sim}\rho$ is equivalent to
		
		\begin{equation*}
			\begin{cases}
				\{\pi(1),\ldots,\pi(q) \}\cap \{\rho(1),\ldots,\rho(q) \}=\{\pi(1),\ldots,\pi(q) \}\\
				\{\pi(q+1),\ldots,\pi(q+p) \}\cap \{\rho(q+1),\ldots,\rho(q+p) \}=\{\pi(q+1),\ldots,\pi(q+p) \}
			\end{cases},
		\end{equation*}
		which implies that
		\begin{align*}
			A_2&=\int_{\mathcal{Z}^{q\wedge p}}\brac{\int_{\mathcal{Z}^{q\wedge p}}f_{q,i}\brac{z_{\pi(1)},\ldots,z_{\pi(q)}}f_{q,i}\brac{z_{\pi(1)},\ldots,z_{\pi(q)}}}\\&\brac{f_{p,j}\brac{z_{\pi(q+1)},\ldots,z_{\pi(q+p)}}f_{p,j}\brac{z_{\pi(q+1)},\ldots,z_{\pi(q+p)}}}\mu(dz_1\ldots dz_{q+p})
			=\norm{f_{q,i}}^2_{\mathfrak{H}^q}\norm{f_{p,j}}^2_{\mathfrak{H}^p}.
		\end{align*}
		Furthermore, observe that for a fixed element $\pi\in
		\mathfrak{S}_{q+p}$, there are $q!$ ways to permute
		$\{1,\ldots,q \}$ and $p!$ ways to permute
		$\{q+1,\ldots,q+p \}$. Since $f_{q,i}$ and $f_{p,j}$ are symmetric functions, we have
		\begin{align*}
			\sum_{\pi \stackrel{0}{\sim}\rho}A_2=q!p! \norm{f_{q,i}}^2_{\mathfrak{H}^q}\norm{f_{p,j}}^2_{\mathfrak{H}^p}.
		\end{align*}
		For the third sum in \eqref{contraction00}, there are
		two cases to consider. If $q=p$ then $\pi
		\stackrel{q}{\sim}\rho$ means
		\begin{equation*}
			\begin{cases}
				\{\pi(1),\ldots,\pi(q) \}\cap \{\rho(q+1),\ldots,\rho(2q) \}= \{\pi(1),\ldots,\pi(q) \}
				\\
				\{\pi(q+1),\ldots,\pi(2q) \}\cap \{\rho(1),\ldots,\rho(q) \}= \{\pi(q+1),\ldots,\pi(2q) \}
			\end{cases},
		\end{equation*}
		which implies that
		\begin{align*}
			A_3=&
			\int_{\mathcal{Z}^{q}}\brac{\int_{Z^{q}}f_{q,i}\brac{z_{\pi(1)},\ldots,z_{\pi(q)}}f_{q,j}\brac{z_{\pi(1)},\ldots,z_{\pi(q)}}}\\
			&\qquad\qquad\qquad\qquad\qquad\quad f_{q,i}\brac{z_{\pi(q+1)},\ldots,z_{\pi(2q)}}f_{q,j}\brac{z_{\pi(q+1)},\ldots,z_{\pi(2q)}}\mu(dz_1\ldots dz_{2q})\\
			=&\inner{f_{q,i},f_{q,j}}^2_{\mathfrak{H}^q}\mathds{1}_{\left\{ q=p \right\}},
		\end{align*}
		and there are $q!^2$ copies like the one above. On the other hand for $q\neq p$,
		\begin{align*}
			A_3&=\int_{\mathcal{Z}^{\abs{q-p}}}\brac{\int_{\mathcal{Z}^{q\wedge p}} f_{q,i}\brac{z_{\pi(1)},\ldots,z_{\pi(q)}}f_{p,j}\brac{z_{\rho(q+1)},\ldots,z_{\rho(q+p)}}}\\
			&\qquad\qquad\qquad\qquad \brac{\int_{\mathcal{Z}^{q\wedge p}}f_{q,i}\brac{z_{\rho(1)},\ldots,z_{\rho(q)}}f_{p,j}\brac{z_{\pi(q+1)},\ldots,z_{\pi(q+p)}}}\mu(dz_1\ldots dz_{q+p})\\
			&=\int_{\mathcal{Z}^{\abs{q-p}}}\brac{f_{q,i} \star^{q\wedge p}_{q\wedge p} f_{p,j}}^2\mu(dz_1 \ldots dz_{\abs{q-p}})\\
			&=\norm{f_{q,i} \star^{q\wedge p}_{q\wedge p} f_{p,j}}^2_{\mathfrak{H}^{\otimes \abs{q-p}}}.
		\end{align*}
		Given a fixed $\pi$ such that $\pi
		\stackrel{q\wedge p}{\sim}\rho$ and $q\neq p$, there is a total of ${q\choose
			q\wedge p}{p\choose q\wedge p}$ ways of choosing $q\wedge p$ elements in $ \{\pi(1),\ldots,\pi(q) \}\cap \{\rho(q+1),\ldots,\rho(q+p) \}$ and $q\wedge p$ elements in $\{\pi(q+1),\ldots,\pi(q+p) \}\cap \{\rho(1),\ldots,\rho(q) \}$. In addition, there are $q!p!$ ways to organize $\{\rho(1),\ldots,\rho(q) \}$ and $\{\rho(q+1),\ldots,\rho(q+p) \}$. Therefore, combining the case $q=p$ and $q\neq p$ gives us
		\begin{align*}
			\sum_{\pi \stackrel{q\wedge
					p}{\sim}\rho}A_3=q!^2\inner{f_{q,i},f_{q,j}}^2_{\mathfrak{H}^q}\mathds{1}_{\left\{ q=p \right\}}+q!p!{q\choose
				q\wedge p}{p\choose q\wedge p}\norm{f_{q,i}
				\star^{q\wedge p}_{q\wedge p}
				f_{p,j}}^2_{\mathfrak{H}^{\otimes
					\abs{q-p}}}\mathds{1}_{\left\{ q\neq p \right\}}.
		\end{align*}
		We now turn to the first sum on the right side of
		\eqref{contraction00}, that is when $\pi
		\stackrel{r}{\sim}\rho$ for $1\leq r\leq q\wedge p
		-1$. We can write       
		\begin{align*}
			A_{1,r}&=\int_{\mathcal{Z}^{q+p-2r}}\brac{\int_{Z^r}f_{q,i}\brac{z_{\pi(1)},\ldots,z_{\pi(q)}}f_{p,j}\brac{z_{\rho(q+1)},\ldots,z_{\rho(q+p)}}}\\
			&\qquad\qquad\qquad\qquad \brac{\int_{\mathcal{Z}^r}f_{q,i}\brac{z_{\rho(1)},\ldots,z_{\rho(q)}}f_{p,j}\brac{z_{\pi(q+1)},\ldots,z_{\pi(q+p)}}}\mu(dz_1\ldots dz_{q+p})\\
			&=\int_{\mathcal{Z}^{q+p-2r}}\brac{f_{q,i} \star^r_r f_{p,j}(z_1,\ldots, z_{q+p-2r})}^2\mu(dz_1\ldots dz_{q+p-2r})\\
			&=\norm{f_{q,i} \star^r_r f_{p,j}}^2_{\mathfrak{H}^{q+p-2r}}.
		\end{align*}
		There are ${q\choose r}{p\choose r}$ ways to choose $r$ elements in $ \{\pi(1),\ldots,\pi(q) \}\cap \{\rho(q+1),\ldots,\rho(q+p) \}$ and $r$ elements in $\{\pi(q+1),\ldots,\pi(q+p) \}\cap \{\rho(1),\ldots,\rho(q) \}$. Furthermore, there are $q!p!$ ways to organize $\{\rho(1),\ldots,\rho(q) \}$ and $\{\rho(q+1),\ldots,\rho(q+p) \}$. This yields
		\begin{align*}
			\sum_{r=1}^{q\wedge p -1}\sum_{\pi\stackrel{r}{\sim} \rho}A_{1,r}=\sum_{r=1}^{q\wedge p -1}q!p!{q\choose r}{p\choose r} \norm{f_{q,i} \star^r_r f_{p,j}}^2_{\mathfrak{H}^{q+p-2r}}.
		\end{align*}
		Thus, we can expand \eqref{contraction00} as
		\begin{align*}
			\norm{f_{q,i}\widetilde{\star}^0_0 f_{p,j}}^2_{\mathfrak{H}^{q+p}}=&\frac{(q+p)!}{(q+p)!^2}
			\bigg(q!p!\norm{f_{q,i}}^2_{\mathfrak{H}^q}\norm{f_{p,j}}^2_{\mathfrak{H}^p}+q!^2\inner{f_{q,i},f_{q,j}}^2_{\mathfrak{H}^q}\mathds{1}_{\left\{ q=p \right\}}
			\\
			&+q!p!{q\choose q\wedge p}{p\choose q\wedge
				p}\norm{f_{q,i} \star^{q\wedge p}_{q\wedge p}
				f_{p,j}}^2_{\mathfrak{H}^{\otimes \abs{q-p}}}
			\mathds{1}_{\left\{ q\neq p \right\}}\\
			&+\sum_{r=1}^{q\wedge p -1}q!p!{q\choose r}{p\choose r} \norm{f_{q,i} \star^r_r f_{p,j}}^2_{\mathfrak{H}^{q+p-2r}}\bigg),
		\end{align*}
		which is the desired statement.
	\end{proof}

	\subsection{Lemmas related to the proof of Theorem
		\ref{theorem_Besov}}
	Our first lemma expresses the Hilbert-Schmidt norm in a
	Besov-Liouville space as an norm in $L^2 \left( [0,1]^{\otimes 2} \right)$.
	\begin{lemma}
		\label{lemma_covariance}
		Let $K=\mathcal{I}_{\beta,2}$ and $S$ be the covariance operator of a random variable $X\in L^2(\Omega)\otimes K$. Let $f\in K$, then
		\begin{align}
			\label{fractional_S}
			\brac{D^\beta_{0^{+}}Sf}(s)=\int_0^1\E{\brac{D^\beta_{0^{+}}X}(r) \brac{D^\beta_{0^{+}}X}(s)} \brac{D^\beta_{0^{+}}f}(r)dr
		\end{align}
		is in $L^2([0,1])$. This leads to 
		\begin{align}
			\label{norm_HS}
			\norm{S}_{\operatorname{HS}(K)}=\norm{\E{\brac{D^\beta_{0^{+}}X}(r) \brac{D^\beta_{0^{+}}X}(s)}}_{L^2([0,1]^{2})}.
		\end{align}
	\end{lemma}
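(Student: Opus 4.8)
The plan is to transfer everything to the flat space $L^2([0,1])$ through the fractional derivative, where both assertions reduce to standard facts about covariance operators of $L^2$-valued random elements.

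First I would note that, by the very definition of the Besov--Liouville space and the fact that $D^\beta_{0^{+}}$ inverts $I^\beta_{0^{+}}$ (recalled in Section~\ref{subsection_Besov}), the map $\Phi := D^\beta_{0^{+}}$ is an isometric isomorphism of $K=\mathcal{I}_{\beta,2}$ onto $L^2([0,1])$, with $\Phi^{-1} = I^\beta_{0^{+}}$ and $\Phi^{\ast} = \Phi^{-1}$. Set $Y := \Phi X = D^\beta_{0^{+}}X$; since $\Phi$ is an isometry and $X \in L^2(\Omega)\otimes K$, one has $Y \in L^2(\Omega)\otimes L^2([0,1])$. Denoting by $T$ the covariance operator of $Y$, the relation $\inner{\Phi x,g}_{L^2([0,1])} = \inner{x,\Phi^{\ast}g}_K$ together with the definition $Tg = \E{\inner{Y,g}_{L^2([0,1])}Y}$ gives
\[
  Tg = \Phi\,\E{\inner{X,\Phi^{\ast}g}_K X} = \Phi S\Phi^{\ast}g, \qquad g \in L^2([0,1]),
\]
so $T = \Phi S\Phi^{\ast}$ is unitarily equivalent to $S$. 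In particular $T$ is positive, self-adjoint and trace class, hence Hilbert--Schmidt, and $\norm{S}_{\operatorname{HS}(K)} = \norm{T}_{\operatorname{HS}(L^2([0,1]))}$ by unitary invariance of the Hilbert--Schmidt norm.

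Next I would use the standard representation of the covariance operator of an $L^2([0,1])$-valued random element, namely \cite[Theorem 7.4.3]{hsing-eubank:2015:theoretical-foundations-functional}: because $Y \in L^2(\Omega)\otimes L^2([0,1])$, the operator $T$ is the integral operator on $L^2([0,1])$ with kernel
\[
  \kappa(r,s) = \E{Y(r)Y(s)} = \E{\brac{D^\beta_{0^{+}}X}(r)\brac{D^\beta_{0^{+}}X}(s)},
\]
and, $T$ being Hilbert--Schmidt, this kernel lies in $L^2([0,1]^{\otimes 2})$ with $\norm{T}_{\operatorname{HS}(L^2([0,1]))} = \norm{\kappa}_{L^2([0,1]^{\otimes 2})}$. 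Combining this with the identity $\norm{S}_{\operatorname{HS}(K)} = \norm{T}_{\operatorname{HS}(L^2([0,1]))}$ of the previous step yields \eqref{norm_HS}. For \eqref{fractional_S} I would unwind $T = \Phi S\Phi^{\ast}$ at the point $g = D^\beta_{0^{+}}f$, for which $\Phi^{\ast}g = I^\beta_{0^{+}}D^\beta_{0^{+}}f = f$: on the one hand $Tg = \Phi S f = D^\beta_{0^{+}}(Sf)$, and on the other hand, by the integral-operator representation, $Tg(s) = \int_0^1 \kappa(r,s)\brac{D^\beta_{0^{+}}f}(r)\,dr$; equating the two is exactly \eqref{fractional_S}, and membership of $D^\beta_{0^{+}}Sf$ in $L^2([0,1])$ is automatic since $T$ maps $L^2([0,1])$ into itself.

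I expect the only genuinely delicate point to be the kernel representation of $T$ --- that is, the legitimacy of interchanging the Bochner expectation with the $L^2$-pairing, and the resulting assertion that $\kappa \in L^2([0,1]^{\otimes 2})$ --- which is precisely the content of the cited result from \cite{hsing-eubank:2015:theoretical-foundations-functional}. Everything else is classical: unitary invariance of $\norm{\cdot}_{\operatorname{HS}}$, the identity $\norm{\cdot}_{\operatorname{HS}} = \norm{\cdot}_{L^2}$ for Hilbert--Schmidt integral operators, and the fact that $D^\beta_{0^{+}}$ is an isometric isomorphism of $\mathcal{I}_{\beta,2}$ onto $L^2([0,1])$.
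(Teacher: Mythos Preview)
Your proof is correct and rests on the same underlying observation as the paper's --- that $D^\beta_{0^{+}}$ is an isometric isomorphism from $\mathcal{I}_{\beta,2}$ onto $L^2([0,1])$ --- but the organization differs. The paper first establishes \eqref{fractional_S} by expanding $\inner{Sf,g}_K=\E{\inner{f,X}_K\inner{g,X}_K}$ via Fubini and then testing against $D^\beta_{0^{+}}g$ for $g$ ranging over an orthonormal basis of $K$; it subsequently deduces \eqref{norm_HS} through the explicit basis computation $\norm{S}^2_{\operatorname{HS}(K)}=\sum_{m,n}\inner{I^\beta_{0^{+}}e_m,\,S I^\beta_{0^{+}}e_n}_K^2$. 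You instead name the isometry $\Phi$, pass to the unitarily equivalent operator $T=\Phi S\Phi^{\ast}$, and invoke the kernel representation from \cite{hsing-eubank:2015:theoretical-foundations-functional} to obtain \eqref{norm_HS} directly, reading off \eqref{fractional_S} afterwards from the integral-operator form of $T$. Your route is a bit more conceptual and sidesteps the basis manipulations; the paper's is more self-contained at this spot (it does not need the external citation here, though the same reference is used later in the edge-counting application).
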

	\begin{proof}
		Let $f,g\in K$. Applying Fubini's theorem to
		$\inner{Sf,g}_K=\E{\inner{f,X}_K\inner{g,X}_K}$ and
		rearranging terms yields
		\begin{align*}
			&\int_0^1 \brac{D^\beta_{0^{+}}Sf}(s) \brac{D^\beta_{0^{+}}g}(s)ds\\
			&\qquad\qquad\qquad =\int_0^1 \brac{\int_0^1\E{\brac{D^\beta_{0^{+}}X}(r) \brac{D^\beta_{0^{+}}X}(s)} \brac{D^\beta_{0^{+}}f}(r)dr}\brac{D^\beta_{0^{+}}g}(s)ds,
		\end{align*}
		which is equivalent to
		\begin{align}
			\label{pre_19}
			\int_0^1 \brac{\brac{D^\beta_{0^{+}}Sf}(s)-\int_0^1\E{\brac{D^\beta_{0^{+}}X}(r) \brac{D^\beta_{0^{+}}X}(s)}\brac{D^\beta_{0^{+}}f}(r)dr} \brac{D^\beta_{0^{+}}g}(s)ds=0.
		\end{align}
		Let $\left\{ g_n \right\}_{n \in \N}$ be an orthonormal basis of
		$\mathcal{I}_{\beta,2}$. Due to the isometry between
		$\mathcal{I}_{\beta,2}$ and $L^2([0,1])$, the set
		$\left\{ D^\beta_{0^{+}}g_n \right\}_{n \in \N}$ is an orthonormal basis of
		$L^2([0,1])$. Then, Equation \eqref{pre_19} implies \eqref{fractional_S}.

		To prove \eqref{norm_HS}, let $\{e_n\}_{n\in \N}$ be
		an orthonormal basis of $L^2([0,1])$. Then,
		$\{e_m\otimes e_n\}_{m,n\in \N}$ is an orthonormal
		basis of $L^2([0,1]^{\otimes 2})$. Also,
		$\left\{I^\beta_{0^{+}}e_n  \right\}_{n\in \N}$ is a
		basis of $K$. Now observe that, using
		\eqref{fractional_S}, we can write
		\begin{align*}
			\inner{I^\beta_{0^{+}}e_m,S I^\beta_{0^{+}}e_n}_K&=\int_0^1 e_m(s) 
			\brac{D^\beta_{0^{+}} S I^\beta_{0^{+}}e_n}(s)ds\\
			&=\int_0^1 e_m(s) 
			\brac{\int_0^1\E{\brac{D^\beta_{0^{+}}X}(r) \brac{D^\beta_{0^{+}}X}(s)} e_n(r)dr}ds\\
			&=\int_0^1\int_0^1 
			\E{\brac{D^\beta_{0^{+}}X}(r) \brac{D^\beta_{0^{+}}X}(s)} e_m(s) e_n(r)drds,
		\end{align*}
		which leads to
		\begin{align*}
			\norm{S}_{\operatorname{HS}(K)}^2=\sum_{m,n\in\N} \inner{I^\beta_{0^{+}}e_m,S I^\beta_{0^{+}}e_n}^2_K=\norm{\E{\brac{D^\beta_{0^{+}}X}(r) \brac{D^\beta_{0^{+}}X}(s)}}_{L^2([0,1]^{\otimes 2})}^2,
		\end{align*}
		where the first equality comes from the identity
		$\norm{T}^2_{\operatorname{HS}(K)}=\sum_{m,n\in\N}
		\inner{k_m,T k_n}^2_K$ for an operator $T \in \operatorname{HS}(K)$ and an orthonormal basis $\{k_n\}_{n\in\N}$ of $K$.
	\end{proof}
	\begin{remark}
		Let $\zeta$ be a $L^2([0,1])$-valued random variable
		with covariance operator $T$. Note that the second statement in Lemma \ref{lemma_covariance}
		is comparable to the identity             
		\begin{equation*}
			\norm{T}_{\operatorname{HS}(L^2([0,1]))}=\norm{\E{\zeta(r)\zeta(s)}}_{L^2([0,1]^{\otimes 2})}
		\end{equation*}
		whenever $T \in
		\operatorname{HS}\left( L^2\left([0,1]\right)
		\right)$.
	\end{remark}
	The following lemma is helpful to compute the Hilbert-Schmidt norms of
	the Poisson process and Brownian motion appearing in Subsection \ref{subsection_Besov}.
	\begin{lemma}
		\label{lemma_cov_kernel}
		Let the setting of Subsection \ref{subsection_Besov}
		prevail, where $X_\lambda$ and $Z$ denoting a Poisson
		process and a Brownian motion in
		$\mathcal{I}_{\beta,2}$, respectively. Then, one has
		\begin{align*}
				\E{\brac{D^\beta_{0^{+}}Z}(r) \brac{D^\beta_{0^{+}}Z}(s)}&=\E{\brac{D^\beta_{0^{+}}X_\lambda}(r) \brac{D^\beta_{0^{+}}X_\lambda}(s)}\\&=\frac{1}{\Gamma(-\beta+1)^2}\int_0^{r\wedge s} (r-x)^{-\beta}(s-x)^{-\beta}dx.
		\end{align*}
	\end{lemma}     
	\begin{proof}
		According to \cite[Section 3.1]{coutin-decreusefond:2013:steins-method-brownian}, the covariance
		operator of our Brownian motion is $S'=I^\beta_{0^{+}}
		I^{1-\beta}_{0^{+}} I^{1-\beta}_{1^{-}}
		D^\beta_{0^{+}}$. Substituting this into Equation
		\eqref{fractional_S}, we get
		\begin{align}
			\label{equation_cov_BM}
			\brac{D^\beta_{0^{+}}I^\beta_{0^{+}} I^{1-\beta}_{0^{+}} I^{1-\beta}_{1^{-}} D^\beta_{0^{+}}f}(s)=\int_0^1\E{\brac{D^\beta_{0^{+}}Z}(r) \brac{D^\beta_{0^{+}}Z}(s)} \brac{D^\beta_{0^{+}}f}(r)dr.
		\end{align}
		For the left-hand side, note that $f\in 
		\mathcal{I}_{\beta,2}$ implies that $
		D^\beta_{0^{+}}f\in L^2\subseteq L^1$, so that $I^{1-\beta}_{0^{+}} I^{1-\beta}_{1^{-}} D^\beta_{0^{+}}f \in L^1$. Thus, $D^\beta_{0^{+}}I^\beta_{0^{+}}=I$ by \cite[Theorem 2.4]{samko-kilbas-marichev:1993:fractional-integrals-derivatives}.
		Continuing with the left-hand side, we first write out
		$I^{1-\beta}_{0^{+}}$ using its definition and then
		perform an integration by part, which yields
		\begin{align*}
			\brac{ I^{1-\beta}_{0^{+}} I^{1-\beta}_{1^{-}} D^\beta_{0^{+}}f}(s)=&\frac{1}{\Gamma({1-\beta})}\int_0^1 1_{[0,s]}(r) (s-r)^{-\beta}\brac{I^{1-\beta}_{1^{-}} D^\beta_{0^{+}}f}(r) dr\\
			=&\frac{1}{\Gamma({1-\beta})}\int_0^1 I^{1-\beta}_{0^{+}} \brac{1_{[0,s]}(\cdot) (s-\cdot)^{-\beta}}(r)\brac{ D^\beta_{0^{+}}f}(r) dr
		\end{align*}
		In particular, the integration by part is valid since \cite[Equation (2.20)]{samko-kilbas-marichev:1993:fractional-integrals-derivatives} is satisfied for $p=q=2$ and $0<\beta<1/2$. Equation \eqref{equation_cov_BM} then becomes
		\begin{align*}
			\int_0^1\brac{\frac{1}{\Gamma({1-\beta})}I^{1-\beta}_{0^{+}} \brac{1_{[0,s]}(\cdot) (s-\cdot)^{-\beta}}(r)-\E{\brac{D^\beta_{0^{+}}Z}(r) \brac{D^\beta_{0^{+}}Z}(s)}} \brac{D^\beta_{0^{+}}f}(r)dr=0.
		\end{align*}
		Now, using a basis argument like the one in the proof of Lemma \ref{lemma_covariance} yields
		\begin{align*}
			\E{\brac{D^\beta_{0^{+}}Z}(r) \brac{D^\beta_{0^{+}}Z}(s)}&=\frac{1}{\Gamma({1-\beta})}I^{1-\beta}_{0^{+}} \brac{1_{[0,s]}(\cdot) (s-\cdot)^{-\beta}}(r)\\
			&=\frac{1}{\Gamma({1-\beta})^2}\int_0^r (r-x)^{-\beta}(s-x)^{-\beta}1_{[0,s]}(x)dx\\
			&=\frac{1}{\Gamma({1-\beta})^2}\int_0^{r\wedge s} (r-x)^{-\beta}(s-x)^{-\beta}dx.
		\end{align*}
		
		Next, we will compute $\E{\brac{D^\beta_{0^{+}}X_\lambda}(r)
			\brac{D^\beta_{0^{+}}X_\lambda}(s)}$. Recall the representation of
		$X_\lambda$ given at \eqref{def_poiprocess_Besov}. In
		order to use this representation, we need the
		joint density of $(T_n,T_m)$. By definition, $T_{m
			\wedge n}$ and $T_{m \vee n}-T_{m \wedge n}$ are
		independent and distributed as $\Gamma({m \wedge
			n},\lambda)$ and $\Gamma(\abs{m-n},\lambda)$,
		respectively. Their joint density is hence given by
		\begin{align*}
			f_{T_{m \wedge n},T_{m \vee n}-T_{m \wedge n}}(x,y)=\frac{\lambda^{m\vee n}}{\Gamma(n\vee m)\Gamma(\abs{m-n})}x^{n\wedge m-1}y^{\abs{m-n}-1}e^{-\lambda(x+y)}.
		\end{align*}
		Since $T_{m \vee n}=T_{m \wedge n}+(T_{m \vee n}-T_{m
			\wedge n})$, we can write, using a simple change of variable,
		\begin{align}
			\label{formula_density}
			f_{T_{m \wedge n},T_{m \vee n}}(x,y)= \frac{\lambda^{m\vee n}}{\Gamma(n\wedge m)\Gamma(\abs{m-n})}x^{n\wedge m-1}(y-x)^{\abs{m-n}-1}e^{-\lambda y} \mathds{1}_{\{x<y\}}. 
		\end{align}
		We are now ready to compute $\E{\brac{D^\beta_{0^{+}}X_\lambda}(r) \brac{D^\beta_{0^{+}}X_\lambda}(s)}$. We have
		\begin{align}
			\label{kernel_X_expand}
			&\E{\brac{D^\beta_{0^{+}}X_\lambda}(r)
				\brac{D^\beta_{0^{+}}X_\lambda}(s)}\nonumber \\
			&\qquad\qquad =\frac{1}{\lambda
				\Gamma(-\beta+1)^2}\Bigg(\sum_{n,m\in
				\N}
			\E{(r-T_n)^{-\beta}_{+}
				(s-T_m)^{-\beta}_{+}}
			-\frac{\lambda s^{ -\beta+1}}{-\beta+1}\sum_{n\in
				\N} \E{(r-T_n)^{-\beta}_{+} }\nonumber \\
			&\qquad\qquad \quad-\frac{\lambda r^{
					-\beta+1}}{-\beta+1}\sum_{n\in \N}
			\E{(s-T_m)^{-\beta}_{+} }+\frac{\lambda^2}{(-\beta+1)^2}s^{-\beta+1}r^{-\beta+1}\Bigg)\nonumber\\
			&\qquad\qquad =\frac{1}{\lambda
				\Gamma(-\beta+1)^2}\Bigg(\sum_{n\in \N}
			\E{(r-T_n)^{-\beta}_{+}
				(s-T_n)^{-\beta}_{+}}+\sum_{n\in
				\N} \sum_{m\neq n}\E{(r-T_n)^{-\beta}_{+}
				(s-T_m)^{-\beta}_{+}}\nonumber \\
			&\qquad\qquad \quad-\frac{\lambda }{-\beta+1}s^{ -\beta+1}\sum_{n\in
				\N} \E{(r-T_n)^{-\beta}_{+}
			}-\frac{\lambda }{-\beta+1}r^{
				-\beta+1}\sum_{n\in \N} \E{(s-T_m)^{-\beta}_{+}
			}\nonumber \\
			&\qquad\qquad \quad+\frac{\lambda^2}{(-\beta+1)^2}s^{-\beta+1}r^{-\beta+1}\Bigg).
		\end{align}
		The first sum on the right side (consisting of all diagonal terms when $m=n$) simplifies as  
		\begin{align*}
			&\frac{1}{\lambda \Gamma(-\beta+1)^2}\sum_{n\in \N} \E{(t-T_n)^{-\beta}_{+} (s-T_n)^{-\beta}_{+}}\\
			&\qquad\qquad\qquad =\frac{1}{\lambda \Gamma(-\beta+1)^2}\sum_{n\in \N}\int_0^\infty (r-x)^{-\beta}_{+} (s-x)^{-\beta}_{+}\frac{\lambda^n}{\Gamma(n)}x^{n-1}e^{-\lambda x} dx\\
			&\qquad\qquad\qquad=\frac{1}{\Gamma(-\beta+1)^2}\int_0^{r\wedge s}(r-x)^{-\beta} (s-x)^{-\beta}e^{-\lambda x} \brac{\sum_{n\in \N} \frac{(\lambda x)^{n-1}}{(n-1)!}} dx\\
			&\qquad\qquad\qquad=\frac{1}{\Gamma(-\beta+1)^2}\int_0^{r\wedge s}(r-x)^{-\beta} (s-x)^{-\beta}  dx.
		\end{align*}
		Next, we consider the second sum on the right side of
		\eqref{kernel_X_expand}. The joint density of
		$(T_n,T_m)$ given in \eqref{formula_density} enables us to write
		%%%%%%%%%%%%%%%%%%%%%%
		\begin{align*}
				&\sum_{n\in \N} \sum_{m\neq n}\E{(r-T_n)^{-\beta}_{+} (s-T_m)^{-\beta}_{+}}\\&=\sum_{n\in \N} \sum_{m=n+1}^\infty\E{(r-T_n)^{-\beta}_{+} (s-T_m)^{-\beta}_{+}}+\sum_{m\in \N} \sum_{m+1}^\infty\E{(r-T_n)^{-\beta}_{+} (s-T_m)^{-\beta}_{+}}\\
				&=\lambda^2\int_0^{r\wedge s}\int_0^s(r-x)^{-\beta} (s-y)^{-\beta}e^{-\lambda y}\sum_{n\in \N} \sum_{m=n+1}^\infty\frac{(\lambda x)^{n-1}(\lambda y-\lambda x)^{m-n-1}}{\Gamma(n)\Gamma(m-n)}dydx\\
				&+\lambda^2\int_0^{r\wedge s}\int_0^r(s-x)^{-\beta} (r-y)^{-\beta}e^{-\lambda y}\sum_{m\in \N} \sum_{n=m+1}^\infty\frac{(\lambda x)^{m-1}(\lambda y-\lambda x)^{n-m-1}}{\Gamma(m)\Gamma(n-m)}dydx
			\end{align*}
			By letting $k=m-n$, it is easy to see that 
			\begin{align*}
				\sum_{n\in \N}  \sum_{m=n+1}^\infty\frac{(\lambda x)^{n-1}(\lambda y-\lambda x)^{m-n-1}}{\Gamma(n)\Gamma(m-n)}=e^{\lambda y},
			\end{align*}
			and hence
			\begin{align*}
				&\sum_{n\in \N} \sum_{m\neq n}\E{(r-T_n)^{-\beta}_{+} (s-T_m)^{-\beta}_{+}}\\
				&=\lambda^2\int_0^{r\wedge s}\int_0^s (r-x)^{-\beta} (s-y)^{-\beta}dydx+\lambda^2\int_0^{r\wedge s}\int_0^r(s-x)^{-\beta} (r-y)^{-\beta}dydx\\
				&=\frac{\lambda^2}{-\beta+1}\int_0^{r\wedge s}(r-x)^{-\beta} (s-x)^{-\beta}(s+t-2x)  dx\\
				&=\frac{\lambda^2}{(-\beta+1)^2}(s-x)^{-\beta+1}(r-x)^{-\beta+1}\Big|_0^{r\wedge s} \\
				&=\frac{\lambda^2}{(-\beta+1)^2}\brac{s^{-\beta+1}r^{-\beta+1}-(s-r\wedge s)^{-\beta+1}(r-r\wedge s)^{-\beta+1}}\\
				&=\frac{\lambda^2}{(-\beta+1)^2}s^{-\beta+1}r^{-\beta+1}. 
		\end{align*}
		
		%%%%%%%%%%%%%%%%%%%%
		
		For the remaining sums in \eqref{kernel_X_expand}, observe that 
		\begin{align*}
			\sum_{m\in\N}\E{(s-T_m)^{-\beta}_{+} }=\frac{\lambda}{-\beta+1}s^{-\beta+1}.
		\end{align*}
		Substituting the previous calculations into
		\eqref{kernel_X_expand} yields
		\begin{align*}
			\E{\brac{D^\beta_{0^{+}}X_\lambda}(r) \brac{D^\beta_{0^{+}}X_\lambda}(s)}=\frac{1}{\Gamma(-\beta+1)^2}\int_0^{r\wedge s} (r-x)^{-\beta}(s-x)^{-\beta}dx.
		\end{align*}

	\end{proof}

	\bibliography{refs}
\end{document}